\def\arxiv#1{\href{http://arxiv.org/abs/#1}{\texttt{arXiv:#1}}}
\let\phi\varphi
\def\N{\mathbb N}
\def\Z{\mathbb Z}
\def\R{\mathbb R}
\def\C{\mathbb C}
\def\pair#1{\langle#1\rangle}
\def\ttt{\mathfrak t}
\def\ggg{\mathfrak g}
\def\lll{\mathfrak l}
\def\ppp{\mathfrak p}
\def\qqq{\mathfrak q}
\def\CT{C_{T}}
\def\CTc{\Omega_{T,c}}
\def\Hc{H_{c}}
\def\HT{H_{T}}
\def\HG{H_{G}}
\def\HTc{H_{T,c}}
\def\HGc{H_{G,c}}
\def\hHc{H^{c}}
\def\hHT{H^{T\!}}
\def\hHG{H^{G\!}}
\def\hHTc{H^{T,c}}
\def\hHGc{H^{G,c}}
\def\RG{R_{G}}
\def\RT{R_{T}}
\def\RK{R_{K}}
\def\RL{R_{L}}
\def\AB#1#2{AB_{#1}^{\,#2}}
\def\barAB#1#2{{\smash{\overline{AB\mathstrut}}\mathstrut}_{\mkern-1.5mu #1}^{\,#2}}
\def\FF{\mathcal F}
\let\MFstar\thinstar 
\def\Gstar{G^{\MFstar}}
\def\WWW{\mathcal{W}}
\def\Wstar{\WWW^{\MFstar}}
\def\PD{PD}
\def\tZ{\tilde Z}
\def\kktilde{\tilde\R}
\def\Xhat{\hat{X}}
\def\Yhat{\hat{Y}}
\def\Zhat{\hat{Z}}
\def\Cc{\Omega_{c}}
\def\CTc{\Omega_{T,c}}
\def\hCG{C^{G}}
\def\hCT{C^{T}}
\def\dirlim{\mathop{\underrightarrow\lim}}
\def\CarG{C_{G}}
\def\CarT{C_{T}}
\def\CarK{C_{K}}
\def\HHH{\mathcal{H}}
\let\RR\RT
\def\RRa{\R[\ttt^{*}_{\alpha}]}
\let\kk\R
\def\CLc{\Omega_{L,c}}
\def\HLc{H_{L,c}}
\def\hHLc{H^{L,c}}
\def\SS{S}
\DeclareMathOperator{\Hom}{Hom}
\DeclareMathOperator{\Ext}{Ext}
\DeclareMathOperator{\rank}{rank}
\DeclareMathOperator{\Ad}{Ad}
\DeclareMathOperator{\Gr}{Gr}
\DeclareMathOperator{\depth}{depth}
\DeclareMathOperator{\height}{ht}
\def\ie{\emph{i.\,e.}}
\def\cf{\emph{cf.}}
\theoremstyle{plain}
\newtheorem{theorem}{Theorem}[section]
\newtheorem{proposition}[theorem]{Proposition}
\newtheorem{lemma}[theorem]{Lemma}
\newtheorem{corollary}[theorem]{Corollary}
\theoremstyle{definition}
\newtheorem{example}[theorem]{Example}
\newtheorem{remark}[theorem]{Remark}
\newtheorem{assumption}[theorem]{Assumption}
\theoremstyle{remark}
\newtheorem*{acknowledgements}{Acknowledgements}
\numberwithin{equation}{section}
\begin{document}

\title[Syzygies in equivariant cohomology]%
  {Syzygies in equivariant cohomology\\for non-abelian Lie groups}
\author{Matthias Franz}
\thanks{The author was supported by an NSERC Discovery Grant.}
\address{Department of Mathematics, University of Western Ontario, London, Ont.\ N6A\;5B7, Canada}
\email{mfranz@uwo.ca}

\subjclass[2010]{Primary 55N91; secondary 13C14, 13D02, 57R91}

\begin{abstract}
  We extend the work of Allday--Franz--Puppe on syzygies in equivariant cohomology
  from tori to arbitrary compact connected Lie groups~\(G\).
  In particular, we show that for a compact orientable \(G\)-manifold~\(X\) the analogue of the
  Chang--Skjelbred sequence is exact if and only if the equivariant cohomology of~\(X\) is reflexive,
  if and only if the equivariant Poincaré pairing for~\(X\) is perfect.
  Along the way we establish that the equivariant cohomology modules
  arising from the orbit filtration of~\(X\) are Cohen--Macaulay.
  We allow singular spaces and introduce a Cartan model for their equivariant cohomology.
  We also develop a criterion for the finiteness of the number of infinitesimal
  orbit types of a \(G\)-manifold.
\end{abstract}

\hypersetup{pdftitle={Syzygies in equivariant cohomology for non-abelian Lie groups}}

\maketitle

\section{Introduction}

Let \(R\) be a polynomial ring in \(r\)~indeterminates,
and let \(M\) be a finitely generated module over~\(R\).
Then \(M\) is called a \emph{\(j\)-th syzygy} if there is
an exact sequence
\begin{equation}
  0 \to M \to F_{1}\to \dots \to F_{j}
\end{equation}
with finitely generated free \(R\)-modules~\(F_{1}\),~\dots,~\(F_{j}\).
The first syzygies are exactly the torsion-free modules,
the second syzygies the reflexive ones, and the
\(r\)-th syzygies are free. In this sense, syzygies
interpolate between torsion-freeness and freeness.
Allday, Puppe and the author initiated the study of syzygies
in the context of torus-equivariant cohomology \cite{AlldayFranzPuppe:orbits1},~\cite{AlldayFranzPuppe:orbits4}.
To illustrate their results, 
let us focus on the second syzygies.
We use Alexander--Spanier cohomology with real coefficients.

Let \(T\cong(S^{1})^{r}\) be a torus, and let \(X\) be a \(T\)-space
with finite Betti sum
and satisfying some other mild assumptions.
Then \(R_{T}=H^{*}(BT)\) is a polynomial ring in \(r\)~indeterminates of degree~\(2\), and
the equivariant cohomology~\(\HT^{*}(X)\) of~\(X\) is an \(\RT\)-module.
Allday--Franz--Puppe showed that the Chang--Skjelbred sequence
\begin{equation}
  \label{eq:cs-intro}
  0 \to \HT^{*}(X) \to \HT^{*}(X^{T}) \to \HT^{*+1}(X_{1,T},X^{T})
\end{equation}
is exact if and only if \(\HT^{*}(X)\) is a reflexive \(\RT\)-module.
Here \(X^{T}\) denotes the fixed point set, and \(X_{1,T}\) is the union
of~\(X^{T}\) and all \(1\)-dimensional orbits.
The sequence~\eqref{eq:cs-intro} often permits an efficient computation of~\(\HT^{*}(X)\). 
Moreover, if \(X\) is a Poincaré duality space, then the reflexivity
of~\(\HT^{*}(X)\) is also equivalent
to the perfection of the equivariant Poincaré pairing
\begin{equation}
  \HT^{*}(X) \times \HT^{*}(X) \to \RT,\
  \quad
  (\alpha,\beta)\mapsto \pair{\alpha\cup\beta,o_{T}},
\end{equation}
which is the composition of the cup product and
evaluation on an equivariant orientation~\(o_{T}\) of~\(X\).

The main purpose of the present paper is to extend the results
of~\cite{AlldayFranzPuppe:orbits1} and~\cite{AlldayFranzPuppe:orbits4}
to actions of arbitrary compact connected Lie groups.
It turns out that essentially all results
carry over to this more general setting. We achieve this by combining
the techniques of Allday--Franz--Puppe
with those of Goertsches--Rollenske~\cite{GoertschesRollenske:2011},
whose study of Cohen--Macaulay actions
gave a first hint at the possibility of such an extension.
Let us describe our results in more detail.

Let \(G\) be a compact connected Lie group with maximal torus~\(T\)
and corresponding Weyl group~\(W\).
Then \(r\) is the rank of~\(G\), and \(\RG=H^{*}(BG)\) is again
a polynomial ring in \(r\)~indeterminates of even degrees.
Let \(X\) be a \(G\)-space;
we always assume that it admits an equivariant closed embedding
into some smooth \(G\)-manifold or \(G\)-orbifold.
This allows us to define a Cartan model
for the equivariant cohomology~\(\HG^{*}(X)\) of~\(X\) and also for
a suitably defined \emph{equivariant homology}~\(\hHG_{*}(X)\).
The latter is \emph{not} the homology of the Borel construction of~\(X\);
it is related to equivariant cohomology
via a universal coefficient spectral sequence (Proposition~\ref{thm:UCT})
and for manifolds and locally orientable orbifolds also via
equivariant Poincaré duality (Proposition~\ref{thm:PD-equiv}).
Here ``locally orientable'' means that \(X\) is a rational homology manifold,
see Section~\ref{sec:PD-equiv} for the precise definition.
For most results, we also assume that
only finitely many infinitesimal orbit types occur in~\(X\).
(The infinitesimal orbit type of~\(x\in X\) is the
orbit of the Lie algebra~\(\ggg_{x}\) of the stabilizer~\(G_{x}\) under the adjoint action of \(G\).)
This condition is often redundant, see below.

The \emph{\(G\)-orbit filtration} of~\(X\) is defined by
\begin{equation}
  X_{i,G} = \bigl\{\, x\in X \bigm| \rank G_{x} \ge r-i \,\bigr\}
\end{equation}
for~\(-1\le i\le r\).
Then \(X_{-1,G}=\emptyset\), \(X_{0,G}\) is the maximal-rank stratum
and \(X_{r,G}=X\). All~\(X_{i,G}\) are \(G\)-stable and closed in~\(X\).
Note that if \(G=T\), then \(X_{0,T}=X^{T}\). 
The maximal-rank stratum
plays the role of the fixed point set in the non-abelian context.

Based on the work of Goertsches--Rollenske, we show:

\begin{proposition}
  \label{thm:incl-Y-X-intro}
  Let \(X\) be a \(G\)-space.
  For~\(0\le i\le r\),
  there is
  an isomorphism of \(\RG\)-modules
  \begin{equation*}
    \HG^{*}(X_{i,G},X_{i-1,G}) = \HT^{*}(X_{i,T},X_{i,T}\cap X_{i-1,G})^{W}.
  \end{equation*}
\end{proposition}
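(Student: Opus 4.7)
The plan is to combine the Borel transfer isomorphism with a direct comparison of $T$-equivariant relative cohomologies. For any $G$-pair $(Y,A)$, the fibration $Y\times_T EG\to Y\times_G EG$ has compact fiber $G/T$, whose cohomology is a free $\RG$-module of rank~$|W|$; this yields a canonical $W$-equivariant isomorphism $\HT^*(Y,A)\cong\RT\otimes_{\RG}\HG^*(Y,A)$, and hence $\HG^*(Y,A)\cong\HT^*(Y,A)^W$ over~$\R$. Applied to $(X_{i,G},X_{i-1,G})$, this reduces the proposition to producing a $W$-equivariant isomorphism
\begin{equation*}
  \HT^*(X_{i,G},X_{i-1,G}) \;\cong\; \HT^*(X_{i,T},\,X_{i,T}\cap X_{i-1,G}).
\end{equation*}

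The natural candidate is the restriction induced by the inclusion of pairs. Since $X_{i,T}$ is $N(T)$-stable (for $n\in N(T)$ we have $T_{nx}=nT_xn^{-1}$, of the same rank), this restriction is $N(T)$-equivariant and thus $W$-equivariant on~$\HT^*$. By excision and the long exact sequence of the triple $(X_{i,G},\,X_{i,T}\cup X_{i-1,G},\,X_{i-1,G})$, the map is an isomorphism if and only if $\HT^*(X_{i,G},\,X_{i,T}\cup X_{i-1,G})=0$. Set-theoretically, the complement consists of points $x$ in the pure $G$-stratum $Z:=X_{i,G}\setminus X_{i-1,G}$ with $\rank G_x=r-i$ but $\rank T_x<r-i$, \ie, points where no maximal torus of~$G_x$ lies inside~$T$. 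The key geometric observation is that $X_{i,G}=G\cdot X_{i,T}$: conjugating a maximal torus of~$G_x$ into~$T$ produces $g\in G$ with $gx\in X_{i,T}$, so every $G$-orbit in~$Z$ meets~$X_{i,T}$.

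To upgrade this set-theoretic statement to the required cohomological vanishing, I would invoke the assumed equivariant embedding of~$X$ into a smooth $G$-manifold or orbifold together with the finite infinitesimal orbit types hypothesis, so that the equivariant slice theorem applies. Around each orbit $Gx\subseteq Z$ with stabilizer $H=G_x$ of rank~$r-i$, one obtains a tubular neighborhood equivariantly modeled on $G\times_H V$ for an $H$-representation~$V$; fixing a maximal torus $T_H\subseteq T$ of~$H$, the intersection of this neighborhood with~$X_{i,T}$ is an explicit $N_G(T_H)$-stable submodel. A direct computation in the Cartan model developed earlier shows that the restriction is an isomorphism in the local model, essentially because both sides reduce to the $T_H$-equivariant cohomology of the $H$-slice, suitably twisted by the action on a flag-variety type fibre $G/N_G(T_H)$. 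Patching finitely many such slices by Mayer--Vietoris (with the finiteness of a cover guaranteed by the finite-infinitesimal-orbit-types assumption) then globalizes the vanishing.

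The main obstacle is precisely this local Cartan-model computation, where one must match the $G$-equivariant geometry of $G\times_H V$ with the $N(T)$-equivariant geometry of its intersection with~$X_{i,T}$ in a manner that is $W$-equivariant on the level of Cartan complexes. The Goertsches--Rollenske identification of $\HG^*$ with $\HT^*(\cdot)^W$ for Cohen--Macaulay $G$-actions makes this transparent in the smooth case, and the Cartan-model framework established earlier in the paper allows the same argument to run in the present singular setting; once the local case is handled, the patching is routine and the Borel transfer completes the identification.
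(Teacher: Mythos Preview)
Your reduction contains a genuine error. After the Borel transfer $\HG^{*}(X_{i,G},X_{i-1,G})\cong\HT^{*}(X_{i,G},X_{i-1,G})^{W}$, you claim it suffices to show that the restriction
\[
  \HT^{*}(X_{i,G},X_{i-1,G})\longrightarrow \HT^{*}(X_{i,T},X_{i,T}\cap X_{i-1,G})
\]
is an isomorphism (equivalently, that $\HT^{*}(X_{i,G},X_{i,T}\cup X_{i-1,G})=0$). This is false already in the simplest non-abelian example: take $G=SU(2)$ acting on $X=G/T\cong S^{2}$ and $i=0$. Then $X_{0,G}=S^{2}$, $X_{-1,G}=\emptyset$, and $X_{0,T}=X^{T}=\{N,S\}$. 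The restriction $\HT^{*}(S^{2})\to\HT^{*}(\{N,S\})\cong\RT\oplus\RT$ is injective but not surjective; its image is $\{(f,g):f(0)=g(0)\}$. Equivalently, $\HT^{*}(S^{2},\{N,S\})\ne 0$. Your proposed local computation cannot succeed, since the orbit $G/T$ with slice a point \emph{is} one of your local models, and the claim already fails there.

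What is true is that the restriction becomes an isomorphism \emph{after taking $W$-invariants}; but that is exactly the statement to be proved, and your vanishing criterion no longer gives leverage. The paper circumvents this entirely: in the constant-rank situation it constructs a $G$-equivariant map
\[
  f\colon G\times_{N_{G}(T)}Y\longrightarrow X
\]
(with $Y$ the top $T$-rank stratum) and shows via the Vietoris--Begle theorem that $f$ is a quasi-isomorphism, the fibres being $G_{x}/N_{G_{x}}(L)$, which are acyclic by a lemma of Goertsches--Rollenske. This yields
\[
  \HG^{*}(X)=\HG^{*}(G\times_{N_{G}(T)}Y)=H_{N_{G}(T)}^{*}(Y)=\HT^{*}(Y)^{W}
\]
directly, never passing through $\HT^{*}$ of the ambient $G$-stratum. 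The relative version then follows by excision and tautness. The acyclicity of $G_{x}/N_{G_{x}}(L)$ is precisely the ingredient absorbing the discrepancy you observed; it is the geometric fact your argument is missing.
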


An immediate consequence is the following:

\begin{corollary}
  \label{thm:HGXi-CM-intro}
  Let \(X\) be a \(G\)-space with only finitely many infinitesimal orbit types and
  such that \(H^{*}(X)\) is finite-dimensional.
  The \(\RG\)-module
  \(\HG^{*}(X_{i,G},X_{i-1,G})\)
  is zero or Cohen--Macau\-lay of dimension~\(r-i\) for~\(0\le i\le r\).
\end{corollary}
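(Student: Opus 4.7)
My plan is to use Proposition~\ref{thm:incl-Y-X-intro} to translate the question into the abelian setting, and then to transfer the Cohen--Macaulay property first from \(\RT\) to \(\RG\) and then from an \(\RT\)-module to its \(W\)-invariants via two standard commutative-algebra arguments.

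After invoking Proposition~\ref{thm:incl-Y-X-intro} to write
\(\HG^{*}(X_{i,G},X_{i-1,G}) \cong M^{W}\) where \(M := \HT^{*}(X_{i,T},X_{i,T}\cap X_{i-1,G})\),
the first step is to establish the torus case: that \(M\) itself is zero or Cohen--Macaulay of dimension \(r-i\) over \(\RT\).  On the difference \(X_{i,T}\setminus(X_{i,T}\cap X_{i-1,G})\) we have \(\rank T_{x}\ge r-i\) but \(\rank G_{x}\le r-i\), and since \(T_{x}\subseteq G_{x}\) this forces the \(T\)-isotropies to have uniform rank \(r-i\).  I would then invoke the Cohen--Macaulayness of the successive quotients of the \(T\)-orbit filtration from Allday--Franz--Puppe~\cite{AlldayFranzPuppe:orbits1}, whose argument depends only on this uniform isotropy rank together with the finiteness hypotheses in force.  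The main obstacle is verifying that their proof applies to the slightly nonstandard pair \((X_{i,T},\,X_{i,T}\cap X_{i-1,G})\) in place of the pair \((X_{i,T},X_{i-1,T})\) treated there; this seems plausible because the argument proceeds stratum-by-stratum, but needs to be checked.

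To pass from \(\RT\) to \(\RG\), recall that by Chevalley--Shephard--Todd \(\RT\) is a free module of rank \(|W|\) over \(\RG=\RT^{W}\); consequently any finitely generated \(\RT\)-module is Cohen--Macaulay of a given dimension over \(\RT\) if and only if it is so over \(\RG\), and hence \(M\) is CM of dimension \(r-i\) over \(\RG\).  Finally, because \(|W|\) is invertible in our coefficient ring, averaging over \(W\) splits \(M=M^{W}\oplus N\) as \(\RG\)-modules.  Since \(\RG\) is a polynomial, hence Gorenstein, ring, local duality characterises nonzero finitely generated CM \(\RG\)-modules of dimension \(r-i\) by the vanishing \(\Ext^{j}_{\RG}(-,\RG)=0\) for \(j\ne i\); this vanishing is inherited by the direct summand \(M^{W}\), so \(M^{W}\cong\HG^{*}(X_{i,G},X_{i-1,G})\) is either zero or Cohen--Macaulay of dimension \(r-i\) over \(\RG\), as required.
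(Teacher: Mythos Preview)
Your proposal is correct and follows essentially the same route as the paper: reduce via Proposition~\ref{thm:incl-Y-X-intro} to the \(W\)-invariants of \(M=\HT^{*}(X_{i,T},X_{i,T}\cap X_{i-1,G})\), invoke the torus case from~\cite{AlldayFranzPuppe:orbits4} (the paper confirms your suspicion that the argument there applies to any closed \(T\)-stable \(Y\) with \(X_{i-1,T}\subset Y\subset X_{i,T}\)), and then pass to \(M^{W}\) over~\(\RG\) --- the paper cites \cite[Lemmas~2.6, 2.7]{GoertschesRollenske:2011} for exactly the Chevalley and direct-summand arguments you spell out. The one point you gloss over is that \(H^{*}(X_{i,T},X_{i,T}\cap X_{i-1,G})\) is finite-dimensional, which is not immediate from the hypotheses and is established in the paper by a separate inductive lemma (Lemma~\ref{thm:HXiXi1-fg}).
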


For~\(i=0\), both results were obtained independently
by Baird~\cite[Sec.~3]{Baird:2007} 
and Goertsches--Rollenske~\cite[Sec.~3]{GoertschesRollenske:2011}. 
The latter authors proved Corollary~\ref{thm:HGXi-CM-intro}
also for compact \(G\)-manifolds~\(X\)
such that \(X_{i-1}=\emptyset\) \cite[Cor.~4.3]{GoertschesRollenske:2011}.

As for torus actions, we consider the \emph{Atiyah--Bredon sequence}
\begin{multline}
  \label{eq:AB-intro}
  0\longrightarrow \HG^{*}(X) \stackrel{\iota^{*}}\longrightarrow \HG^{*}(X_{0,G})
  \stackrel{\delta_{0}}\longrightarrow \HG^{*+1}(X_{1,G},X_{0,G})\stackrel{\delta_{1}}\longrightarrow \cdots \\
  \stackrel{\delta_{r-2}}\longrightarrow \HG^{*+r-1}(X_{r-1,G},X_{r-2,G})
  \stackrel{\delta_{r-1}}\longrightarrow \HG^{*+r}(X_{r,G},X_{r-1,G})
  \longrightarrow 0.
\end{multline}
Here \(\iota^{*}\) is induced by the inclusion~\(\iota\colon X_{0,G}\hookrightarrow X\),
and \(\delta_{i}\) for~\(i\ge0\) is
the connecting homomorphism in the long exact sequence
for the triple~\((X_{i+1,G},X_{i,G},X_{i-1,G})\).
The \emph{Atiyah--Bredon complex}~\(\AB{G}{*}(X)\)
with~\(\AB{G}{i}(X)=\HG^{*+i}(X_{i,G},X_{i-1,G})\)
is obtained from~\eqref{eq:AB-intro} by dropping the leading term.

\begin{theorem}
  \label{thm:Ext=HAB-intro}
  Let \(X\) be a \(G\)-space with only finitely many infinitesimal orbit types and
  such that \(H^{*}(X)\) is finite-dimensional.
  For any~\(j\ge0\),
  the \(j\)-th cohomology of the Atiyah--Bredon complex is
  \begin{equation*}
    H^{j}(\AB{G}{*}(X)) = \Ext_{\RG}^{j}(\hHG_{*}(X),\RG).
  \end{equation*}
\end{theorem}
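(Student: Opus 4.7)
The approach is to compare two filtrations on a double complex, mimicking the argument of Allday--Franz--Puppe for torus actions and using the Cohen--Macaulay input of Corollary~\ref{thm:HGXi-CM-intro} to force a collapse.

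Start with a chain complex $C_\bullet$ of $\RG$-modules computing $\hHG_*(X)$, for instance the Cartan chain complex developed earlier in the paper, endowed with an increasing filtration $F_\bullet C_\bullet$ induced by the $G$-orbit filtration of $X$, so that $F_pC_\bullet/F_{p-1}C_\bullet$ computes $\hHG_*(X_{p,G},X_{p-1,G})$. Take a $\RG$-injective resolution $\RG \to I^\bullet$ (of length~$r$, since $\RG$ is regular) and form the double complex $\Hom_{\RG}(C_\bullet, I^\bullet)$. The filtration $F_\bullet$ induces a spectral sequence with
\[
  E_1^{p,q} = \Ext_{\RG}^{q}\bigl(\hHG_*(X_{p,G},X_{p-1,G}),\,\RG\bigr)
\]
abutting to $\Ext_{\RG}^{p+q}(\hHG_*(X),\RG)$.

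The key step is to force this spectral sequence to collapse. By Corollary~\ref{thm:HGXi-CM-intro}, $\HG^*(X_{p,G},X_{p-1,G})$ is Cohen--Macaulay of dimension $r-p$, and by the universal coefficient spectral sequence (Proposition~\ref{thm:UCT}) the same holds for $\hHG_*(X_{p,G},X_{p-1,G})$. Over the $r$-dimensional regular ring $\RG$, the $\Ext^q$ of a Cohen--Macaulay module of dimension $r-p$ vanishes in every degree except $q = p$. Hence $E_1^{p,q}=0$ for $q \neq p$, and on the surviving diagonal a second application of Proposition~\ref{thm:UCT} (to recover a Cohen--Macaulay module from its sole non-vanishing $\Ext$) gives
\[
  E_1^{p,p} \;=\; \Ext_{\RG}^{p}\bigl(\hHG_*(X_{p,G},X_{p-1,G}),\,\RG\bigr)
  \;=\; \HG^{*+p}(X_{p,G},X_{p-1,G}) \;=\; \AB{G}{p}(X).
\]
The spectral sequence therefore degenerates at $E_2$ and yields $H^{j}(\AB{G}{*}(X),d_1) = \Ext_{\RG}^{j}(\hHG_*(X),\RG)$.

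The remaining point, which I expect to be the main technical obstacle, is to verify that the differential $d_1$ emerging from this machinery coincides with the Atiyah--Bredon connecting homomorphism $\delta_p$ coming from the long exact sequence of the triple $(X_{p+1,G},X_{p,G},X_{p-1,G})$. This reduces to functoriality of the Cartan model under closed $G$-stable inclusions together with naturality of the connecting homomorphism with respect to the $\Ext$-identifications in~$E_1^{p,p}$; once the requisite compatibility of the Cartan-model filtration with the Ext-dualisation is tracked through the degree shift $\HG^{*+p}$, the proof is complete.
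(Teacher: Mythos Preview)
There is a genuine gap in your abutment claim. The Cartan chain complex~\(C_{\bullet}\) is a dg~\(\RG\)-module whose cohomology is \(\hHG_{*}(X)\); it is \emph{not} a projective resolution of the module~\(\hHG_{*}(X)\). Consequently the total cohomology of~\(\Hom_{\RG}(C_{\bullet},I^{\bullet})\) is the hyper-\(\Ext\) of the complex~\(C_{\bullet}\), not \(\Ext_{\RG}^{*}(\hHG_{*}(X),\RG)\). Concretely: collapsing the \(I\)-direction first shows that the total cohomology equals \(H^{*}\bigl(\Hom_{\RG}(C_{\bullet},\RG)\bigr)\), and by (the dual of) Lemma~\ref{thm:quiso-Car-Hom} this is \(\HG^{*}(X)\). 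The two are related only through the universal coefficient spectral sequence of Proposition~\ref{thm:UCT}, which does not collapse in general. So what your filtration actually produces is the ordinary orbit-filtration spectral sequence \(E_{1}=\AB{G}{*}(X)\Rightarrow\HG^{*}(X)\), which does not yield the theorem.

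The missing ingredient is Proposition~\ref{thm:seq-hHG-Xi-Xi1}: one must first prove that the \emph{homology} spectral sequence of the orbit filtration degenerates at~\(E_{1}\), so that the \emph{module}~\(\hHG_{*}(X)\) itself carries an honest filtration with associated graded \(\hHG_{*}(X_{p,G},X_{p-1,G})\). Only then can one run your argument on \(\Hom_{\RG}\bigl(\hHG_{*}(X),I^{\bullet}\bigr)\) (or equivalently take a filtered projective resolution of the module~\(\hHG_{*}(X)\)); now the abutment really is \(\Ext_{\RG}^{*}(\hHG_{*}(X),\RG)\), the \(E_{1}\)-page is \(\Ext_{\RG}^{q}(\hHG_{*}(X_{p,G},X_{p-1,G}),\RG)\), and the Cohen--Macaulay collapse goes through exactly as you describe. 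This is how the paper proceeds (following \cite{AlldayFranzPuppe:orbits1},~\cite{AlldayFranzPuppe:orbits4}), and the degeneration in Proposition~\ref{thm:seq-hHG-Xi-Xi1} is itself a nontrivial consequence of the Cohen--Macaulay property, not something available for free at the chain level.
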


This allows us to characterize syzygies in equivariant cohomology:

\begin{theorem}
  \label{thm:partial-exact-intro}
  Let \(X\) be a \(G\)-space with only finitely many infinitesimal orbit types and
  such that \(H^{*}(X)\) is finite-dimensional,
  and let \(1\le j\le r\). Then
  \(\HG^{*}(X)\) is a \(j\)-th syzygy over~\(\RG\)
  if and only if the part
  \begin{equation*}
    0\to \HG^{*}(X) \to \HG^{*}(X_{0,G}) \to \cdots \to \HG^{*+j-1}(X_{j-1,G},X_{j-2,G})
  \end{equation*}
  of the Atiyah--Bredon sequence is exact.
\end{theorem}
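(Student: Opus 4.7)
Set $M=\HG^{*}(X)$ and $N=\hHG_{*}(X)$. By Theorem~\ref{thm:Ext=HAB-intro}, $H^{i}(\AB{G}{*}(X))=\Ext_{\RG}^{i}(N,\RG)$, so in particular $H^{0}(\AB{G}{*}(X))=\Hom_{\RG}(N,\RG)=:N^{*}$. The plan is to identify the augmentation $\iota^{*}\colon M\to\AB{G}{0}(X)$ with the composite $M\to N^{*}\hookrightarrow\AB{G}{0}(X)$ of a canonical isomorphism and an inclusion, and then convert the exactness of the partial Atiyah--Bredon sequence into a vanishing condition on $\Ext_{\RG}^{i}(N,\RG)$.

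The first step is to show that the edge map $M\to N^{*}$ of the universal coefficient spectral sequence (Proposition~\ref{thm:UCT}) is an isomorphism, using the Cartan-model setup of the paper together with the finiteness hypotheses. Granting this, a direct diagram chase shows that exactness of
\begin{equation*}
 0 \to M \to \AB{G}{0}(X) \to \dotsb \to \AB{G}{j-1}(X)
\end{equation*}
is equivalent to $\Ext_{\RG}^{i}(N,\RG)=0$ for $1\le i\le j-2$: exactness at $M$ and at $\AB{G}{0}(X)$ is automatic from $M\cong\ker\delta_{0}$, while exactness at $\AB{G}{i}(X)$ for $1\le i\le j-2$ is precisely $H^{i}(\AB{G}{*}(X))=0$.

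It then remains to establish the algebraic equivalence: under the identification $M=N^{*}$ over the polynomial ring $\RG$, the module $M$ is a $j$-th syzygy if and only if $\Ext^{i}_{\RG}(N,\RG)=0$ for $1\le i\le j-2$. The direction ``$\Leftarrow$'' is immediate: dualizing a free resolution $\dotsb\to F_{1}\to F_{0}\to N\to 0$, the Ext-vanishing gives an exact sequence $0\to N^{*}\to F_{0}^{*}\to\dotsb\to F_{j-1}^{*}$, which exhibits $M=N^{*}$ as a $j$-th syzygy with the $j$ witnessing free modules $F_{0}^{*},\dotsc,F_{j-1}^{*}$. For ``$\Rightarrow$'' I would invoke the Auslander--Bridger theory: the $j$-th-syzygy condition on $M$ is equivalent to $\Ext^{i}(D(M),\RG)=0$ for $1\le i\le j$, where $D(M)$ is the transpose, and one matches $D(M)$ with $N$ modulo stable equivalence (addition of free summands) to conclude.

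The main obstacle is Step~1, the isomorphism $M\cong N^{*}$ via the UCT edge map in the singular, non-abelian setting; this rests on the Cartan-model construction of $\hHG_{*}$ developed earlier in the paper and on verifying collapse of the relevant edge. The ``$\Rightarrow$'' direction of the algebraic equivalence is a secondary difficulty: identifying the Auslander--Bridger transpose of $M$ with $N$ up to free summands is not formal and will likely require the Cohen--Macaulay information on the orbit-filtration quotients supplied by Corollary~\ref{thm:HGXi-CM-intro}.
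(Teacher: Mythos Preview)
Your Step~1 is false: the canonical map \(M=\HG^{*}(X)\to N^{*}=\Hom_{\RG}(\hHG_{*}(X),\RG)\) is \emph{not} an isomorphism in general, and no finiteness hypothesis will make it one. Take \(G=T=S^{1}\) acting freely on \(X=S^{1}\): then \(M=\HT^{*}(X)=H^{*}(X/T)=\R\), the residue field of \(\RT=\R[t]\), while by Proposition~\ref{thm:iso-hHGX-hHXG} \(N=\hHT_{*}(X)\cong\R\) is torsion, so \(N^{*}=0\). In fact, the map \(M\to N^{*}\) being an isomorphism is precisely the statement that the augmented Atiyah--Bredon complex is exact in positions \(-1\) and \(0\), i.e., that \(M\) is (at least) a second syzygy. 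So the isomorphism \(M\cong N^{*}\) is part of what is to be proved for \(j\ge 2\), not an input; your claim that ``exactness at \(M\) and at \(\AB{G}{0}(X)\) is automatic'' collapses the cases \(j=1,2\) of the theorem into tautologies, which they are not.

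The algebraic equivalence you propose at the end inherits this problem and adds an off-by-one error: even granting \(M=N^{*}\), dualizing a free resolution of \(N\) yields exactness at \(F_{0}^{*},\dots,F_{j-2}^{*}\) only if \(\Ext^{i}_{\RG}(N,\RG)=0\) for \(1\le i\le j-1\), not \(j-2\). And identifying the Auslander--Bridger transpose \(D(M)\) with \(N\) up to free summands is essentially the same difficulty as Step~1; Corollary~\ref{thm:HGXi-CM-intro} does not give this.

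The paper's proof (Theorem~\ref{thm:partial-exact}) avoids all of this by reducing to the torus case, which is the main theorem of \cite{AlldayFranzPuppe:orbits1}. The reduction uses three ingredients: the \(G\)-orbit filtration is also Cohen--Macaulay for \(X\) viewed as a \(T\)-space (Proposition~\ref{thm:G-filt-T-filt}); \(\HG^{*}(X)\) is a \(j\)-th syzygy over \(\RG\) iff \(\HT^{*}(X)\) is one over \(\RT\) (Proposition~\ref{thm:syz-res}); and \(H^{i}(\barAB{T}{*})=\RT\otimes_{\RG}H^{i}(\barAB{G}{*})\) via Proposition~\ref{thm:HGc-res-ind}, so the two vanish simultaneously since \(\RT\) is free over \(\RG\).
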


In the torus case this result is due to
Allday--Franz--Puppe~\cite[Thm.~5.7]{AlldayFranzPuppe:orbits1},~\cite[Thm.~4.8]{AlldayFranzPuppe:orbits4}.
For general~\(G\) and compact~\(X\),
the case~\(j=1\) has been established by 
Allday (unpublished, based on the case~\(G=SU(2)\) solved by Chen) and
Goertsches--Rollenske~\cite[Thm.~3.9]{GoertschesRollenske:2011}.
Whether it extends to higher syzygies in the non-abelian setting has been an open question,
see~\cite[Question~4.7]{GoertschesRollenske:2011}.

\goodbreak

In the important special case~\(j=2\) we get:

\begin{corollary}
  Let \(X\) be a compact orientable \(G\)-manifold or \(G\)-orbifold.
  The following are equivalent:
  \begin{enumerate}
  \item The \(\RG\)-module~\(\HG^{*}(X)\) is reflexive.
  \item The sequence 
    \begin{equation*}
      0 \to \HG^{*}(X) \to \HG^{*}(X_{0,G}) \to \HG^{*+1}(X_{1,G},X_{0,G})
    \end{equation*}
    is exact.
  \item The equivariant Poincaré pairing \(\HG^{*}(X) \times \HG^{*}(X) \to \RG\) is perfect.
  \end{enumerate}
\end{corollary}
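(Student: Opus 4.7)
The plan is to combine Theorem~\ref{thm:partial-exact-intro} with equivariant Poincaré duality. A compact $G$-manifold or $G$-orbifold~$X$ satisfies the standing hypotheses of the earlier theorems: $H^{*}(X)$ is finite-dimensional, and compactness together with the slice theorem ensures finitely many orbit types and hence finitely many infinitesimal orbit types. Since second syzygies are precisely the reflexive modules, Theorem~\ref{thm:partial-exact-intro} with~$j=2$ yields the equivalence (1)~$\Longleftrightarrow$~(2) immediately.

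For (2)~$\Longleftrightarrow$~(3), write $n=\dim X$. Equivariant Poincaré duality (Proposition~\ref{thm:PD-equiv}) provides an $\RG$-linear isomorphism $\HG^{*}(X)\cong\hHG_{n-*}(X)$ given by capping with an equivariant orientation~$o_{G}$. Using the adjunction $\langle\alpha\cup\beta,o_{G}\rangle=\langle\alpha,\beta\cap o_{G}\rangle$, the Poincaré pairing factors as this duality isomorphism (applied to the second argument) followed by the evaluation pairing $\HG^{*}(X)\otimes\hHG_{*}(X)\to\RG$. Perfection of the Poincaré pairing is therefore equivalent to the adjoint map
\begin{equation*}
  \Phi\colon \HG^{*}(X)\longrightarrow \Hom_{\RG}(\hHG_{*}(X),\RG)
\end{equation*}
being an isomorphism. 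Theorem~\ref{thm:Ext=HAB-intro} with~$j=0$ identifies the target with $\Ext^{0}_{\RG}(\hHG_{*}(X),\RG)=H^{0}(\AB{G}{*}(X))=\ker(\delta_{0})$, a submodule of~$\HG^{*}(X_{0,G})$. Once one shows that under this identification $\Phi$ coincides with~$\iota^{*}$ followed by the inclusion $\ker(\delta_{0})\hookrightarrow\HG^{*}(X_{0,G})$, perfection becomes the statement that~$\iota^{*}$ is injective with image exactly~$\ker(\delta_{0})$, which is precisely the exactness of the three-term sequence in~(2).

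The main obstacle I anticipate is this last naturality check. The edge map of the universal coefficient spectral sequence (Proposition~\ref{thm:UCT}) is by construction the evaluation pairing, so what remains is to verify that the isomorphism of Theorem~\ref{thm:Ext=HAB-intro}, when restricted to the $H^{0}$ term, intertwines evaluation against a class with restriction of that class to the maximal-rank stratum~$X_{0,G}$. This is a commutative-diagram chase combining naturality of the cap product, the definition of~$\iota^{*}$, and the construction of the Atiyah--Bredon complex from the orbit filtration\,---\,essentially the blueprint used in the torus case in~\cite{AlldayFranzPuppe:orbits4}, transferred to the non-abelian setting via Proposition~\ref{thm:incl-Y-X-intro}.
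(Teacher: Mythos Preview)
Your approach is correct and essentially identical to the paper's: the equivalence (1)~$\Leftrightarrow$~(2) comes from Theorem~\ref{thm:partial-exact} with~$j=2$, and (2)~$\Leftrightarrow$~(3) by factoring the Poincar\'e pairing map through the canonical map~$\HG^{*}(X)\to\Hom_{\RG}(\hHG_{*}(X),\RG)=H^{0}(\AB{G}{*}(\FF))$ composed with the transpose of the Poincar\'e duality isomorphism. The naturality check you flag as the main obstacle is in fact already part of the statement of the full Theorem~\ref{thm:Ext=HAB} (the second sentence, absent from the introductory version~\ref{thm:Ext=HAB-intro}): under the isomorphism $H^{0}(\AB{G}{*}(\FF))\cong\Hom_{\RG}(\hHG_{*}(X),\RG)$, the map induced by~$\iota^{*}$ corresponds precisely to your evaluation map~$\Phi$, so no additional diagram chase is needed.
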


That the freeness of~\(\HG^{*}(X)\) over~\(\RG\) implies the exactness
of the non-abelian Chang--Skjelbred sequence is due
to Brion~\cite[Thm.~9]{Brion:1997} for compact multiplicity-free \(G\)-spaces
and to Goertsches--Mare~\cite[Thm.~2.2]{GoertschesMare:2014}
for compact \(G\)-manifolds.
The perfection of the equivariant Poincaré pairing was shown by
Ginzburg~\cite[Cor.~3.9]{Ginzburg:1987} 
and Brion~\cite[Prop.~1]{Brion:2000} 
again under the assumption that \(\HG^{*}(X)\) is free over~\(\RG\).

\smallskip

Let \(X\) be a \(G\)-space.
In~\cite{AlldayFranzPuppe:orbits1} and~\cite{AlldayFranzPuppe:orbits4},
\(T\)-spaces were assumed to have finite Betti sum and also finitely
many infinitesimal orbit types, and as mentioned above,
we often require the same for \(G\)-spaces.
As another application of equivariant homology,
we show that the latter condition is redundant for manifolds and
locally orientable orbifolds.

\begin{theorem}
  \label{thm:finite-inf-orbits-intro}
  Let \(X\) be a \(G\)-manifold or locally orientable \(G\)-orbifold.
  If \(H^{*}(X)\) is finite-dimensional, then
  only finitely many infinitesimal orbit types occur in~\(X\).
\end{theorem}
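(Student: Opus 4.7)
The plan is to reduce to the case $G=T$ via structure theory of compact Lie groups, and then to handle the torus case using the finite generation of the equivariant homology module furnished by Poincaré duality, combined with an associated-primes count carried out over the $T$-orbit filtration.

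\textbf{Reduction to a torus action.} Restricting the $G$-action on $X$ to the maximal torus $T$ preserves both the manifold/orbifold hypothesis and the finiteness of~$H^*(X)$. For each $x\in X$ the subalgebra $\ttt_x=\ggg_x\cap\ttt$ is a toral subalgebra of the compact Lie algebra~$\ggg_x$, and after replacing $x$ by a suitable point of its $G$-orbit we may assume $\ttt_x$ is a maximal toral subalgebra (a Cartan) of~$\ggg_x$; two such representatives in one $G$-orbit differ by an element of the Weyl group $W=N_G(T)/T$. A Borel--de Siebenthal argument applied to the reductive centralizer $Z_{\ggg}(\ttt_x)$ shows that only finitely many $G$-conjugacy classes of compact Lie subalgebras of~$\ggg$ admit any prescribed $\ttt_x\subset\ttt$ as a Cartan (the semisimple part being encoded by a closed subsystem of the restricted root system). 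Hence the assignment $[\ggg_x]\mapsto W\!\cdot\!\ttt_x$ is finite-to-one, and it suffices to prove the theorem for $G=T$.

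\textbf{Torus case.} Assume $G=T$. Equivariant Poincaré duality (Proposition~\ref{thm:PD-equiv}) together with the universal coefficient spectral sequence (Proposition~\ref{thm:UCT}) implies that $\HT^*(X)$ and $\hHT_*(X)$ are finitely generated over the Noetherian ring~$\RT$. By the same mechanism applied to the open complement of any closed $T$-invariant submanifold and to the long exact sequence of a pair, each relative module $\HT^*(X_{i,T},X_{i-1,T})$ arising from the $T$-orbit filtration is finitely generated as well. For a single orbit one has $\HT^*(T/T')\cong\RT/I(\ttt')$, so the locally closed stratum of orbit dimension exactly $i$ decomposes according to the Lie algebra~$\ttt'$ of the stabilizer into pieces whose equivariant cohomology has associated prime $I(\ttt')$. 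Since every nonempty $\ttt'$-piece contributes nontrivially (it is a manifold with nonzero $H^0$), the finite set $\operatorname{Ass}\bigl(\HT^*(X_{i,T},X_{i-1,T})\bigr)$ contains $I(\ttt')$ for each $\ttt'$ of codimension~$r-i$ that actually occurs. Summing over the $r+1$ values of~$i$ yields finitely many subspaces~$\ttt_x$ in total.

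\textbf{Main obstacle.} The crucial step is to establish that each relative module $\HT^*(X_{i,T},X_{i-1,T})$ is finitely generated over~$\RT$ \emph{without} circular reliance on Corollary~\ref{thm:HGXi-CM-intro}, whose very hypothesis is what we are trying to prove. Here equivariant Poincaré duality enters essentially: it expresses these relative modules as the equivariant homology of open submanifolds, whose finite generation can be bootstrapped from that of $\hHT_*(X)$ by a Mayer--Vietoris argument along the filtration. A secondary subtlety is the Lie-theoretic Step~1 in the orbifold setting, where the finite parts of isotropy groups need to be handled carefully to keep the correspondence finite-to-one.
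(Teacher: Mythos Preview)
Your reduction to the torus parallels the paper's (which simply cites Mostow, Remark~\ref{rem:finite-G-T}); the real issue is the torus step, where the gap you flag under ``Main obstacle'' is not closed. The proposed fix---express $\HT^*(X_{i,T},X_{i-1,T})$ via Poincar\'e duality as equivariant homology of open submanifolds and bootstrap from $\hHT_*(X)$ by Mayer--Vietoris---does not work: in the long exact sequences attached to the filtration, finite generation of the abutment says nothing about the individual pieces (a finitely generated $\RT$-module can sit in a short exact sequence between two non-finitely-generated ones), and no inductive scheme along the filtration repairs this. What you want is essentially that the $E_1$-page has only finitely many nonzero summands, and that cannot be read off from the abutment without additional input.

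The paper supplies that input by a different mechanism. It works with closed-support equivariant homology $\hHTc_*(-;\kktilde)$, which by Assumption~\ref{ass:finite-hom} needs no finiteness hypothesis on the strata, and proves \emph{unconditionally} via localization (Proposition~\ref{thm:localization}, Lemma~\ref{thm:S-Xp}) that the orbit-filtration spectral sequence degenerates at~$E_1$ (Proposition~\ref{thm:degenerate-E1}). Each stratum~$X_\alpha$ is a locally orientable orbifold (Lemma~\ref{thm:locally-orientable}) and hence carries a nonzero equivariant orientation in $E_1=E_\infty$; infinitely many $\alpha$ would make the associated graded of $\hHTc_*(X;\kktilde)$ non-finitely-generated over the Noetherian ring~$\RT$, contradicting $\hHTc_*(X;\kktilde)\cong\HT^*(X)$ (Proposition~\ref{thm:PD-equiv}). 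The unconditional $E_1$-degeneration is precisely what your associated-primes argument lacks: it converts ``each stratum contributes'' into a size constraint on $\hHTc_*(X;\kktilde)$ without ever needing the relative modules to be finitely generated.
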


If \(X\) is compact, it follows easily
from the differentiable slice theorem
that there are actually
only finitely many orbits types in~\(X\),
\cf~\cite[Prop.~VIII.3.13]{Borel:1960}.
(For the orbifold version of the differentiable slice theorem,
see~\cite[Prop.~2.3]{LermanTolman:1997}.)
By a result of Mann~\cite[Thm.~3.5]{Mann:1962},
the same conclusion holds if
\(X\) is an orientable manifold (or cohomology manifold) with finitely generated \emph{integral} cohomology.
While Theorem~\ref{thm:finite-inf-orbits-intro} is in the same spirit as Mann's result, the proof is very different.

\smallskip

The paper is organized as follows:
After discussing Cohen--Macaulay modules, syzygies and the algebraic aspects of Cartan models in Section~\ref{sec:prelim},
we define equivariant de~Rham homology and cohomology for possibly singular spaces in Section~\ref{sec:equiv-cohom-hom}.
In Section~\ref{sec:res-ind} we discuss the behaviour of syzygies under restriction to and induction from
a maximal torus. To prepare for the proof of the main results, we study
spaces with isotropy groups of constant rank in Section~\ref{sec:constant}.
The results of~\cite{AlldayFranzPuppe:orbits1} and~\cite{AlldayFranzPuppe:orbits4}
are generalized in Section~\ref{sec:filtration}
where we also introduce the notion of a Cohen--Macaulay filtration of a \(G\)-space.
In the final Section~\ref{sec:finite} we
prove Theorem~\ref{thm:finite-inf-orbits-intro}.

\begin{acknowledgements}
  I would like to thank Chris Allday, Andrey Minchenko,
  Volker Puppe, Reyer Sjamaar and Andrzej Weber for helpful discussions.
\end{acknowledgements}

\section{Algebraic preliminaries}
\label{sec:prelim}

\subsection{Notation and standing assumptions}

Throughout this paper, the letter~\(G\) denotes a compact connected Lie group of rank~\(r\)
with maximal torus~\(T\cong(S^{1})^{r}\) and corresponding Weyl group~\(W=N_{G}(T)/T\).
The Lie algebra of~\(G\) is written as~\(\ggg\) and its dual as~\(\ggg^{*}\).
We adopt this naming scheme for all Lie groups.

All manifolds and orbifolds are assumed to be paracompact.
(See~\cite[Sec.~1.1]{AdemLeidaRuan:2007} for the definition of an orbifold.)
All (co)homology is taken with real coefficients.

We adopt a cohomological grading for all complexes, so that differentials always have degree~\(+1\).
The degree of an element~\(c\) of a complex is denoted by~\(|c|\).
Homological complexes are turned into cohomological ones by grading them negatively.
For example, elements in the \(n\)-th homology group~\(H_{n}(X)\) of a space~\(X\) have degree~\(-n\).
A quasi-isomorphism is a map of complexes inducing an isomorphism in cohomology.

Let \(A\)~and~\(B\) be complexes.
The \(n\)-th degree of the complex~\(\Hom_{\R}(A,B)\) consists
of all linear maps~\(f\colon A\to B\) that raise degrees by~\(n\in\Z\). The differential is
defined by
\begin{equation}
  d(f) = d_{B}\circ f- (-1)^{n}f\circ d_{A}.
\end{equation}
This generalizes to differential graded~(dg) modules over some dg~algebra.
The complex~\(A^{\vee}=\Hom_{\R}(A,\R)\) is the \emph{dual complex} of~\(A\).
Moreover, for~\(k\in\Z\), the \(n\)-th degree of the \emph{shifted complex}~\(A[k]\) is equal to~\(A^{n+k}\);
the differential of~\(A[k]\) is \((-1)^{k}d_{A}\). We also write \(H^{*+k}(X)\) instead of~\(H^{*}(X)[k]\).

Unless specified otherwise,
all tensor products are over~\(\R\).

\subsection{Cohen--Macaulay modules}

Let \(R\) be a polynomial ring over a field in \(r\)~indeterminates.
We assume that \(R\) is graded by assigning a positive degree to each indeterminate.
Recall that a graded \(R\)-module~\(M\ne0\) is called \emph{Cohen--Macaulay}
if it is 
finitely generated and
\begin{equation}
  \depth M = \dim M.
\end{equation}

\begin{proposition}
  \label{thm:prop-CM}
  Let \(M\) be a finitely generated graded \(R\)-module, and let \(d\in\N\).
  \begin{enumerate}
  \item \(M\) is Cohen--Macaulay of dimension~\(d\) if and only if
    \begin{equation*}
      \Ext_{R}^{i}(M,R) \ne 0
      \quad\Leftrightarrow\quad
      i = r-d.
    \end{equation*}
  \item In this case, \(\Ext_{R}^{r-d}(M,R)\) is again Cohen--Macaulay
    of dimension~\(d\).
  \end{enumerate}
\end{proposition}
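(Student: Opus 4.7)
The plan is to derive both statements from standard graded commutative algebra, using that $R$ is a regular, positively graded ring of Krull dimension~\(r\) and hence Cohen--Macaulay. The main ingredients I will invoke are the graded Auslander--Buchsbaum formula
\begin{equation*}
  \mathrm{pd}_R(M) + \depth(M) = \depth(R) = r,
\end{equation*}
the characterisation of projective dimension and grade via Ext,
\begin{equation*}
  \mathrm{pd}_R(M) = \max\{\, i : \Ext_R^i(M, R) \ne 0 \,\}, \qquad \mathrm{grade}_R(M) = \min\{\, i : \Ext_R^i(M, R) \ne 0 \,\},
\end{equation*}
and the equality $\mathrm{grade}_R(M) = r - \dim M$, which holds because $R$ is Cohen--Macaulay.

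For~(i), the condition that $\Ext_R^i(M, R)$ is nonzero precisely in the single index $i = r - d$ is equivalent to $\mathrm{grade}_R(M) = \mathrm{pd}_R(M) = r - d$. By Auslander--Buchsbaum this becomes $\depth(M) = d$, and by the grade formula it becomes $\dim M = d$; together these are exactly the defining conditions of a Cohen--Macaulay module of dimension~\(d\). Every step is reversible, so the two conditions are equivalent.

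For~(ii), I would take a minimal graded free resolution
\begin{equation*}
  0 \to F_{r-d} \to \cdots \to F_1 \to F_0 \to M \to 0
\end{equation*}
of length exactly $r - d = \mathrm{pd}_R(M)$, and apply $\Hom_R(-, R)$. The resulting complex of finitely generated graded free modules has cohomology $\Ext_R^i(M, R)$, which by~(i) vanishes except at $i = r - d$. After a shift the dualised complex is therefore itself a free resolution of $M^{\vee} := \Ext_R^{r-d}(M, R)$, of length~$r - d$. Dualising a second time recovers the original resolution of~\(M\), giving $\Ext_R^{r-d}(M^{\vee}, R) \cong M$ and $\Ext_R^i(M^{\vee}, R) = 0$ for $i \ne r - d$. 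Applying~(i) to~$M^{\vee}$ now shows it to be Cohen--Macaulay of dimension~\(d\). The only real obstacle is collecting the requisite classical input in the graded (rather than local) setting; there is no geometric content, and in practice the author presumably just cites a standard reference such as Bruns--Herzog.
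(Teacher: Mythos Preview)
Your argument is correct and your final prediction is accurate: the paper does not prove the statement from scratch but cites Eisenbud~\cite[Prop.~A1.16]{Eisenbud:2005} for~(i) and Bruns--Herzog~\cite[Prop.~3.3.3\,(b)]{BrunsHerzog:1998} for the local version of~(ii), then deduces the graded version by localizing \(\Ext_{R}^{*}(\Ext_{R}^{r-d}(M,R),R)\) at every prime~\(\ppp\lhd R\) and using that each~\(R_{\ppp}\) is Cohen--Macaulay. Your route is more self-contained: for~(i) you spell out the Auslander--Buchsbaum/grade argument that underlies the cited reference, and for~(ii) you bypass the reduction to the local case entirely by observing directly that the dualized minimal free resolution is again a free resolution (of length~\(r-d\)) of~\(M^{\vee}=\Ext_{R}^{r-d}(M,R)\), so that~(i) applies to~\(M^{\vee}\). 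Both approaches are standard; yours has the advantage of exhibiting the duality~\(\Ext_{R}^{r-d}(M^{\vee},R)\cong M\) explicitly, while the paper's localization trick is the one that generalizes most readily when the ambient ring is not regular.
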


\begin{proof}
  See~\cite[Prop.~A1.16]{Eisenbud:2005} for the first part.
  A version of the second part for local rings
  can be found in~\cite[Prop.~3.3.3\,(b)]{BrunsHerzog:1998};
  our claim follows from this by localizing \(\Ext_{R}^{*}(\Ext_{R}^{i}(M,R)\))
  at all primes~\(\ppp\lhd R\),
  noting that \(R_{\ppp}\) is a Cohen--Macaulay local ring.  
\end{proof}

\subsection{Syzygies}

Let \(R\) be a polynomial ring in \(r\)~indeterminates over a field.
The definition of a syzygy has been given in the introduction.
We observe that any finitely generated \(R\)-module is a zeroth syzygy.
From~\cite[App.~E]{BrunsVetter:1988} we note:

\begin{proposition}
  \label{thm:syzygy}
  Let \(M\) be a finitely generated \(R\)-module.
  The following are equivalent for any~\(0\le j\le r\):
  \begin{enumerate}
  \item \label{thm:syzygy-1}
    \(M\) is a \(j\)-th syzygy.
  \item \label{thm:syzygy-2}
    Any regular sequence~\(f_{1}\),~\dots,~\(f_{j}\in R\) is \(M\)-regular.
  \item \label{thm:syzygy-3}
    \(\depth_{R_{\ppp}} M_{\ppp}\ge\min(j,\dim R_{\ppp})\) for any prime ideal~\(\ppp\lhd R\).
  \end{enumerate}
\end{proposition}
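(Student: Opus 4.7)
The plan is to decompose the three-way equivalence into the purely depth-theoretic part (2)$\Leftrightarrow$(3), which I would prove directly, and the substantive part (1)$\Leftrightarrow$(3), which rests on Auslander--Bridger theory specialised to the regular ring~\(R\).

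For (2)$\Rightarrow$(3), fix a prime \(\ppp\lhd R\) and put \(k=\min(j,\dim R_{\ppp})\). Since \(R\) is regular, hence Cohen--Macaulay, \(\ppp\) contains an \(R\)-regular sequence of length \(\height\ppp=\dim R_{\ppp}\), and in particular one of length~\(k\). By hypothesis this sequence is \(M\)-regular; localising at \(\ppp\) preserves regularity and places it in \(\ppp R_{\ppp}\), so \(\depth_{R_{\ppp}} M_{\ppp}\ge k\). For the converse, suppose \(f_{1},\dots,f_{j}\) is \(R\)-regular but not \(M\)-regular, and let \(i\le j\) be minimal with \(f_{i}\) a zero-divisor on \(N:=M/(f_{1},\dots,f_{i-1})M\). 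Then \(f_{i}\) lies in some \(\ppp\in\operatorname{Ass}(N)\), which also contains \(f_{1},\dots,f_{i-1}\), so \(\height\ppp\ge i\). Hypothesis~(3) then forces \(\depth_{R_{\ppp}} M_{\ppp}\ge i\), but the associated-prime condition says \(f_{1},\dots,f_{i-1}\) is a \emph{maximal} \(M_{\ppp}\)-regular sequence in \(\ppp R_{\ppp}\), giving \(\depth_{R_{\ppp}} M_{\ppp}=i-1\), a contradiction.

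For (1)$\Rightarrow$(3) I would induct on~\(j\), the case \(j=0\) being vacuous. Given \(0\to M\to F_{1}\to F_{2}\to\cdots\to F_{j}\) exact with the \(F_{i}\) free, the cokernel \(N=F_{1}/M\) is a \((j-1)\)-th syzygy (for \(j\ge 2\); the case \(j=1\) only needs \(\depth F_{1,\ppp}=\dim R_{\ppp}\)). Localising the short exact sequence \(0\to M_{\ppp}\to F_{1,\ppp}\to N_{\ppp}\to 0\) and applying the depth lemma \(\depth M_{\ppp}\ge\min(\depth F_{1,\ppp},\depth N_{\ppp}+1)\) together with the inductive bound \(\depth N_{\ppp}\ge\min(j-1,\dim R_{\ppp})\) yields \(\depth M_{\ppp}\ge\min(j,\dim R_{\ppp})\).

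The main obstacle is the reverse implication (3)$\Rightarrow$(1), which has to manufacture free modules out of a purely local depth hypothesis. The route I would take is Auslander--Bridger dualisation: choose a finite presentation \(P_{1}\to P_{0}\to M\to 0\), define the transpose \(\operatorname{Tr} M=\operatorname{coker}(P_{0}^{\vee}\to P_{1}^{\vee})\), and translate condition~(3) into the vanishing \(\Ext_{R}^{i}(\operatorname{Tr} M,R)=0\) for \(1\le i\le j\) by comparing the two spectral sequences converging from local \(\Ext\) of \(\operatorname{Tr} M\) to local cohomology of \(M\); regularity of \(R\) is used to ensure finite projective dimension and compatibility of depth with localisation. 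Dualising a free resolution of \(\operatorname{Tr} M\) and splicing with the natural identification of \(M\) with its double dual then produces the desired exact sequence \(0\to M\to F_{1}\to\cdots\to F_{j}\). This last step is the technical heart of the argument and is precisely the content carried out in Bruns--Vetter, Appendix~E, whence I would simply cite the reference rather than reproduce it.
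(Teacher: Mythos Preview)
Your proposal is correct, and in fact goes well beyond what the paper does: the paper gives no proof at all, simply stating the proposition as a known fact drawn from \cite[App.~E]{BrunsVetter:1988}. Your direct arguments for (2)\(\Leftrightarrow\)(3) and (1)\(\Rightarrow\)(3) are sound, and for (3)\(\Rightarrow\)(1) you end up citing the same reference, so there is no divergence in approach---only in the level of detail you choose to spell out.

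One minor remark on your sketch of (3)\(\Rightarrow\)(1): the phrase ``comparing the two spectral sequences converging from local \(\Ext\) of \(\operatorname{Tr} M\) to local cohomology of \(M\)'' is not quite the standard route. The usual argument passes through the Auslander--Bridger exact sequences
\[
0 \to \Ext_R^1(\operatorname{Tr} M, R) \to M \to M^{**} \to \Ext_R^2(\operatorname{Tr} M, R) \to 0
\]
and their higher analogues, together with the observation that the local depth condition~(3) forces these \(\Ext\) groups to vanish at every prime, hence globally. Since you defer to Bruns--Vetter for this step anyway, the imprecision is harmless.
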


\subsection{Cartan models}
\label{sec:cartan-models}

Recall that a \(\Gstar\)-module
is a complex~\(A\) of \(G\)-modules with operations~\(L_{\xi}\) (of degree~\(0\)) and~\(\iota_{\xi}\) (of degree~\(-1\)) for~\(\xi\in\ggg\)
satisfying the same relations as the Lie derivative and the contraction operator for differential forms;
see~\cite[Def.~2.3.1]{GuilleminSternberg:1999} for details.

The Cartan model of a \(\Gstar\)-module~\(A\) is denoted by
\begin{equation}
  \label{eq:def-Cartan-model}
  \CarG^{*}(A) = (\R[\ggg^{*}]\otimes A)^{G}
\end{equation}
where \(\R[\ggg^{*}]\) denotes the real-valued polynomials on~\(\ggg\),
graded by twice the polynomial degree.
The differential is
\begin{equation}
  d(f\otimes a) = f\otimes d a - \sum_{k=1}^{r}f x_{k}\otimes\iota_{\xi_{k}}a \; ;
\end{equation}
here \((\xi_{k})\) is a basis for~\(\ggg\) with dual basis~\((x_{k})\).
The cohomology of~\(\CarG^{*}(A)\) is the \emph{equivariant cohomology} of~\(A\) and denoted by~\(\HG^{*}(A)\).
The coefficient field~\(\R\) is a \(\Gstar\)-module 
with trivial operations;
we set \(\RG=\HG^{*}(\R)=\CarG^{*}(\R)=\R[\ggg^{*}]^{G}\).
The Cartan model~\eqref{eq:def-Cartan-model} 
is a dg~\(\RG\)-module, hence \(\HG^{*}(A)\) is an \(\RG\)-module.

\begin{remark}
  If \(A\) is a \(\Gstar\)-algebra in the sense of~\cite[Def.~2.3.1]{GuilleminSternberg:1999},
  then \(\CarG^{*}(A)\) is a dg~\(\RG\)-algebra by componentwise multiplication,
  and \(\HG^{*}(A)\) is an \(\RG\)-algebra. However, most of the time
  we will not be concerned with the multiplicative structure
  in equivariant cohomology. The only exceptions are Section~\ref{sec:PD-equiv},
  where we use the cup product in the equivariant de~Rham cohomology
  of an orientable manifold or orbifold to establish equivariant Poincaré duality,
  and Section~\ref{sec:partial}, where we define the equivariant Poincaré pairing.
\end{remark}

\begin{lemma}
  \label{thm:Car-serre}
  Let \(A\) be a bounded below \(\Gstar\)-module. 
  There is a first-quadrant spectral sequence, natural in~\(A\), with
  \begin{equation*}
    E_{1} = \RG\otimes H^{*}(A) \;\Rightarrow\; H_{G}^{*}(A).
  \end{equation*}
\end{lemma}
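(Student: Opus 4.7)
I would prove this by the standard filtration of the Cartan model by polynomial degree. Set
\[
  F^{p}\CarG^{*}(A) = \bigl(\R[\ggg^{*}]^{\,\ge p}\otimes A\bigr)^{G},
\]
where \(\R[\ggg^{*}]^{\,\ge p}\) denotes the polynomials of polynomial degree at least~\(p\). The Cartan differential splits as \(d = d_{1} + d_{2}\) with \(d_{1}(f\otimes a) = f\otimes da\) preserving polynomial degree and \(d_{2}(f\otimes a) = -\sum_{k}fx_{k}\otimes\iota_{\xi_{k}}a\) raising it by~\(1\), so \(F^{\bullet}\) is a decreasing filtration by subcomplexes. Since \(A\) is bounded below, only finitely many values of~\(p\) contribute in each fixed total degree, so the filtration is bounded in every degree and the associated spectral sequence converges to~\(\HG^{*}(A)\). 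Shifting \(A\) if necessary to make it non-negatively graded puts the spectral sequence into the first quadrant.

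Next I would identify \(E_{1}\). On the associated graded only \(d_{1}\) survives, so \(d_{0}\) is the differential induced by~\(d_{A}\) on
\(E_{0}^{p,*} \cong \bigl(\R[\ggg^{*}]^{p}\otimes A\bigr)^{G}\). Because \(G\) is compact, averaging over~\(G\) provides an exact functor from \(G\)-modules to \(\R\)-vector spaces, so \(G\)-invariants commute with taking cohomology; this gives
\[
  E_{1} = H\bigl(E_{0},d_{0}\bigr) = \bigl(\R[\ggg^{*}]\otimes H^{*}(A)\bigr)^{G}.
\]

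The final step is to identify this invariant subspace with \(\RG\otimes H^{*}(A)\). For this I would invoke the connectedness of~\(G\): for every \(\xi\in\ggg\) the Cartan homotopy formula \(L_{\xi} = d\,\iota_{\xi} + \iota_{\xi}d\) holds in any \(\Gstar\)-module, so \(L_{\xi}\) is null-homotopic on~\(A\) and acts as zero on \(H^{*}(A)\). Since \(G\) is connected, this implies that \(G\) acts trivially on~\(H^{*}(A)\), and consequently
\[
  \bigl(\R[\ggg^{*}]\otimes H^{*}(A)\bigr)^{G} = \R[\ggg^{*}]^{G}\otimes H^{*}(A) = \RG\otimes H^{*}(A).
\]
Naturality in~\(A\) is immediate from the functoriality of the Cartan complex and of the filtration.

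There is no real obstacle here—the argument is routine—but the one step that requires attention, and the only place where the hypotheses are actually used in a nontrivial way, is the passage from \((\R[\ggg^{*}]\otimes H^{*}(A))^{G}\) to \(\RG\otimes H^{*}(A)\). This is where the connectedness of~\(G\) (implicit in the standing assumption) enters through Cartan's formula. Boundedness below of~\(A\) is only used to guarantee that the filtration is bounded in each total degree, ensuring convergence.
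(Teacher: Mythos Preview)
Your proof is correct and is precisely the standard argument via the filtration of the Cartan model by polynomial degree; the paper itself does not give a proof but simply refers to \cite[Thm.~6.5.2]{GuilleminSternberg:1999}, whose argument is the one you have written out. Your identification of the two places where hypotheses enter (boundedness below for convergence, connectedness of~\(G\) for the triviality of the action on~\(H^{*}(A)\)) is accurate.
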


\begin{proof}
  See~\cite[Thm.~6.5.2]{GuilleminSternberg:1999}.
\end{proof}

Let \(A\) be a \(\Gstar\)-module. 
The inclusion~\(K\hookrightarrow G\) of a closed subgroup induces
a canonical restriction map~\(\CarG^{*}(A)\to\CarK^{*}(A)\), which is a 
chain map compatible with the restriction map~\(\RG\to\RK\). For~\(K=1\) we get
a map~\(\CarG^{*}(A)\to A\), which induces the restriction map 
\begin{equation}
  \label{eq:Car-restriction}
  \HG^{*}(A)\to H^{*}(A).
\end{equation}

\smallskip

Let \(\WWW=\R[\ggg^{*}]\otimes\bigwedge\ggg^{*}\) be the Weil algebra of~\(\ggg^{*}\),
\cf~\cite[Sec.~3.2]{GuilleminSternberg:1999},
and let \(A\) be a \(\Wstar\)-module, see~\cite[Def.~3.4.1]{GuilleminSternberg:1999}
for the definition. For example, if \(G\) acts locally freely on a manifold~\(X\),
then the de~Rham complex~\(\Omega^{*}(X)\)
is a \(\Wstar\)-module, \cf~Lemma~\ref{thm:cond-C} below.
There is a canonical isomorphism
(with some twisted differential on the right-hand side)
\begin{equation}
  \label{eq:A-iso-connection}
  A = \bigwedge \ggg^{*}\otimes A_{\rm hor}
\end{equation}
where \(A_{\rm hor}\subset A\) denotes the subcomplex of horizontal elements,
\cf~\cite[Thm.~3.4.1]{GuilleminSternberg:1999}.
The basic subcomplex~\(A_{\rm bas}=(A_{\rm hor})^{G}\) is canonically
a dg~module over 
\(\WWW_{\rm bas}=\RG\).

\begin{lemma}
  \label{thm:retract-CarGA-Abas}
  Let \(A\) be a \(\Wstar\)-module.
  As a complex, \(A_{\rm bas}\) is a deformation retract
  of~\(\CarG^{*}(A)\). There is a natural deformation retraction
  \( 
    \CarG^{*}(A)\to A_{\rm bas}
  \) 
  which is a morphism of dg~\(\RG\)-modules.
  In particular, there is a natural isomorphism
  of \(\RG\)-modules 
  \begin{equation*}
    \HG^{*}(A) = H^{*}(A_{\rm bas}).
  \end{equation*}
\end{lemma}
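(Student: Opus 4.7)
The plan is to exhibit an explicit deformation retraction using the Weil model as an intermediate step. First I would define the inclusion $\iota\colon A_{\rm bas}\to \CarG^{*}(A)$ by $a\mapsto 1\otimes a$. For $a\in A_{\rm bas}$, horizontality ($\iota_{\xi_{k}}a=0$) makes the contraction terms in the Cartan differential drop out, so $d(1\otimes a)=1\otimes da$, and combined with $G$-invariance this shows that $\iota$ is a cochain map. Moreover it is $\RG$-linear for the natural $\RG$-module structure on $A_{\rm bas}$ coming from $\RG\subset\WWW$ acting on $A$ via its $\Wstar$-module structure.

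Next I would use the horizontal decomposition \eqref{eq:A-iso-connection} to rewrite
\begin{equation*}
  \R[\ggg^{*}]\otimes A \;\cong\; \R[\ggg^{*}]\otimes\bigwedge\ggg^{*}\otimes A_{\rm hor} \;=\; \WWW\otimes A_{\rm hor}
\end{equation*}
as graded vector spaces. The key step is to check that, transported along this isomorphism, the Cartan differential on the left becomes the tensor product of the Weil differential on $\WWW$ and the induced differential on $A_{\rm hor}$. Passing to $G$-invariants then identifies $\CarG^{*}(A)$ with the Weil model $(\WWW\otimes A_{\rm hor})_{\rm bas}$ as a dg~$\RG$-module.

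With this identification, the retraction and the homotopy come from the classical Koszul contracting homotopy on $\WWW$, which provides a $G$-equivariant, $\RG$-linear deformation retraction of the Weil algebra onto its basic subcomplex $\RG$. Tensoring this retraction with $\mathrm{id}_{A_{\rm hor}}$ and restricting to the basic subcomplex yields a deformation retraction $\CarG^{*}(A)\to (\RG\otimes A_{\rm hor})_{\rm bas}=A_{\rm bas}$ of dg~$\RG$-modules; naturality in $A$ is automatic since each ingredient is canonical.

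The main obstacle is the verification in the second paragraph: under the Mathai--Quillen-type isomorphism, the twist of the Cartan differential by the connection must be shown to reproduce exactly the Weil differential on $\WWW$ coupled to $A_{\rm hor}$. It is here that one genuinely needs the $\Wstar$-module structure on $A$ rather than only the $\Gstar$-module structure, because the identification uses the action of $\bigwedge\ggg^{*}$ on $A$ to define the decomposition and to absorb the contraction terms. Once this bookkeeping is in place, the final statement $\HG^{*}(A)=H^{*}(A_{\rm bas})$ follows at once since deformation retractions are quasi-isomorphisms.
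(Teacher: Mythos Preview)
The paper itself only cites Nicolaescu and Guillemin--Sternberg, Chapter~5, so let me compare your sketch to the argument in those references.

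Your third paragraph contains a genuine error: the Koszul homotopy witnesses that $\WWW$ is \emph{acyclic}, i.e.\ it retracts $\WWW$ onto~$\R$ in degree~$0$, not onto its basic subcomplex~$\WWW_{\rm bas}=\RG$. Since $H^{*}(\WWW)=\R$ while $H^{*}(\RG)=\RG$, no deformation retraction $\WWW\to\RG$ can exist, $\RG$-linear or not. Consequently the map you build by tensoring with~$\mathrm{id}_{A_{\rm hor}}$ does not do what you claim; relatedly, $(\RG\otimes A_{\rm hor})_{\rm bas}$ is $\RG\otimes A_{\rm bas}$, not~$A_{\rm bas}$. There is also a gap in the second paragraph: under~\eqref{eq:A-iso-connection} the differential on~$A$ is genuinely twisted by curvature terms (for $A=\Omega^{*}(P)$ one has $d\theta^{k}$ equal to the curvature component in~$A_{\rm hor}$), so the Cartan differential transported to~$\WWW\otimes A_{\rm hor}$ is not a tensor product of $d_{\WWW}$ with a differential on~$A_{\rm hor}$; the cross terms change the $\bigwedge\ggg^{*}$-degree independently of the $\R[\ggg^{*}]$-degree. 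Also, $\CarG^{*}(A)=(\WWW\otimes A_{\rm hor})^{G}$ is not the same as $(\WWW\otimes A_{\rm hor})_{\rm bas}$ for the natural $\Gstar$-structure.

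The route in the cited references is different. One passes via the Mathai--Quillen isomorphism to the Weil model $(\WWW\otimes A)_{\rm bas}$ with the \emph{full}~$A$, not~$A_{\rm hor}$. The inclusion $A\hookrightarrow\WWW\otimes A$, $a\mapsto 1\otimes a$, is a $\Gstar$-homotopy equivalence because $\WWW$ is acyclic; the key lemma (this is where the $\Wstar$-structure is actually used) is that a $\Gstar$-homotopy equivalence between $\Wstar$-modules restricts to a homotopy equivalence of basic subcomplexes. This gives $A_{\rm bas}\simeq(\WWW\otimes A)_{\rm bas}\cong\CarG^{*}(A)$, and the explicit formulas in Nicolaescu make the retraction $\RG$-linear and natural.
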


\begin{proof}
  See in particular \cite{Nicolaescu:1999}, or~\cite[Ch.~5]{GuilleminSternberg:1999}.
\end{proof}

\smallskip

The dual complex~\(A^{\vee}\)
of a \(\Gstar\)-module~\(A\) is again a \(\Gstar\)-module
via the assignments
\begin{align}
  \pair{g\phi,a} &= \pair{\phi,g^{-1}a}, &
  \pair{L_{\xi}\phi,a} &= -\pair{\phi,L_{\xi} a}, &
  \pair{\iota_{\xi}\phi,a} &= -(-1)^{|\phi|}\pair{\phi,\iota_{\xi}a}
\end{align}
for~\(\phi\in A^{\vee}\),~\(a\in A\),~\(g\in G\) and~\(\xi\in\ggg\).
Here \(\pair{\phi,a}\) denotes the pairing between~\(A^{\vee}\) and~\(A\).
Moreover, if \(A\) is a \(\Wstar\)-module, then so is \(A^{\vee}\), where the
\(\WWW\)-module structure is defined by
\begin{equation}
  \pair{w\phi,a} = (-1)^{|w|\,|\phi|}\pair{\phi,w a}
\end{equation}
for~\(\phi\in A^{\vee}\),~\(a\in A\) and~\(w\in\WWW\).

\begin{lemma}
  \label{thm:Adual-Wstar}
  For any \(\Wstar\)-module~\(A\)
  there is an isomorphism of dg~\(\RG\)-modules
  \begin{equation*}
    (A^{\vee})_{\rm bas} \cong (A_{\rm bas})^{\vee}\,[-\dim G].
  \end{equation*}
\end{lemma}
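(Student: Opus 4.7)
The plan is to exploit the decomposition $A = \bigwedge\ggg^* \otimes A_{\rm hor}$ of equation~\eqref{eq:A-iso-connection}, dualize it, and then pass to $G$-invariants. Since each $\iota_\xi$ acts on $A$ only through the exterior factor, the total image $\sum_\xi \iota_\xi(A)$ equals $\bigwedge^{<\dim G}\ggg^* \otimes A_{\rm hor}$, so a functional $\phi \in A^\vee$ is horizontal iff it annihilates this subspace. Consequently $(A^\vee)_{\rm hor}$ is canonically the linear dual of the remaining top summand $\bigwedge^{\dim G}\ggg^* \otimes A_{\rm hor}$, i.e.
\[
  (A^\vee)_{\rm hor} \;\cong\; \bigwedge\nolimits^{\!\dim G}\ggg \;\otimes\; (A_{\rm hor})^\vee.
\]
As $\bigwedge^{\dim G}\ggg$ is one-dimensional, a choice of generator identifies this with a single degree shift of $(A_{\rm hor})^\vee$ by $\dim G$.

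Next I would take $G$-invariants. Since $G$ is compact and connected, $\det\Ad = 1$, so $\bigwedge^{\dim G}\ggg$ is a trivial $G$-module and a bi-invariant volume form on~$G$ furnishes a canonical generator. For any $G$-module~$V$, Haar averaging provides a natural projector $V \to V^G$ and hence the canonical isomorphism $(V^\vee)^G \cong (V^G)^\vee$; applying this with $V = A_{\rm hor}$ yields
\[
  (A^\vee)_{\rm bas} \;=\; \bigl((A^\vee)_{\rm hor}\bigr)^G \;\cong\; \bigl((A_{\rm hor})^G\bigr)^\vee[-\dim G] \;=\; (A_{\rm bas})^\vee[-\dim G]
\]
as graded $\RG$-modules.

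It remains to verify compatibility with the differentials and with the $\RG$-action. The $\RG$-module structure arises by adjunction from the action of $\R[\ggg^*] \subset \WWW$ on~$A$; because $\R[\ggg^*]$ is concentrated in even degrees no Koszul signs appear, and the identification intertwines the two $\RG$-actions tautologically. The principal obstacle is the compatibility with the differentials: the isomorphism~\eqref{eq:A-iso-connection} carries the Mathai--Quillen/Kalkman twisted differential, and one must check that, on dualizing and restricting to the top exterior component, it reproduces the differential on $(A_{\rm bas})^\vee$ with the $(-1)^{\dim G}$ sign built into the shift $[-\dim G]$. I would perform this verification by fixing a generator of $\bigwedge^{\dim G}\ggg^*$, using Ad-invariance to globalize, and reducing the check to a local sign computation on the Koszul-type complex $\bigwedge\ggg^* \otimes A_{\rm hor}$.
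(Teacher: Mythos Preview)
Your approach is essentially the same as the paper's: dualize the decomposition~\eqref{eq:A-iso-connection}, identify the horizontal part of the dual as $\bigwedge^{\dim G}\ggg\otimes(A_{\rm hor})^{\vee}$, and take $G$-invariants using that $\bigwedge^{\dim G}\ggg$ is $G$-invariant. The paper's proof is terser because it asserts directly that the dualized decomposition $A^{\vee}\cong\bigwedge\ggg\otimes(A_{\rm hor})^{\vee}$ is an isomorphism of $\Wstar$-modules (with $\iota_{\xi}$ acting on $\bigwedge\ggg$ by exterior multiplication), which packages the differential and $\RG$-module compatibilities you propose to check by hand.
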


\begin{proof}
  Dualizing \eqref{eq:A-iso-connection}, we get an isomorphism of \(\Wstar\)-modules
  \begin{equation}
    \label{eq:iso-Avee-ggg}
    A^{\vee} = \bigwedge \ggg \otimes (A_{\rm hor})^{\vee}
  \end{equation}
  where \(\bigwedge\ggg^{*}\subset\Wstar\) acts on the right-hand side
  through the pairing with~\(\bigwedge \ggg\),
  and the operators~\(\iota_{\xi}\) act on~\(\bigwedge \ggg\) by exterior multiplication.
  Hence
  \begin{equation}
    (A^{\vee})_{\rm hor} = \bigwedge\nolimits^{\dim G}\ggg\otimes(A_{\rm hor})^{\vee} \cong (A_{\rm hor})^{\vee}[-\dim G],
  \end{equation}
  and the claim follows by taking \(G\)-invariants,
  given that \(\bigwedge\nolimits^{\dim G}\ggg\) is \(G\)-invariant.
\end{proof}

\subsection{Universal coefficient theorem}
\label{sec:uct}

Let \(A\) be a bounded \(\Gstar\)-module.
We want to relate the equivariant cohomology of~\(A\) to that of~\(A^{\vee}\).
The following result will be crucial for us.

\begin{lemma}[Kostant]
  \label{thm:Sg-free}
  There is a \(G\)-submodule~\(\HHH\subset\R[\ggg^{*}]\) such that
  the restricted multiplication map
  \begin{equation*}
    \RG\otimes\HHH = \R[\ggg^{*}]^{G}\otimes\HHH \to \R[\ggg^{*}]
  \end{equation*}
  is bijective, hence an isomorphism
  of \(G\)-modules and \(\RG\)-modules.
\end{lemma}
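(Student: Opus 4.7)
The plan is to realise $\HHH$ as the space of $G$-harmonic polynomials and reduce the claim to Kostant's classical freeness theorem. First I would split off the centre: since $G$ is compact and connected, the adjoint action is trivial on $\mathfrak{z}(\ggg)$, so $\ggg = \mathfrak{z}(\ggg) \oplus \ggg'$ with $\ggg' = [\ggg,\ggg]$ is a $G$-equivariant decomposition. Correspondingly $\R[\ggg^*] = \R[\mathfrak{z}(\ggg)^*] \otimes \R[(\ggg')^{*}]$ and $\RG = \R[\mathfrak{z}(\ggg)^*] \otimes \R[(\ggg')^{*}]^{G}$, so if $\HHH'$ is a complement with the desired property for the semisimple part, then $\HHH := 1 \otimes \HHH'$ works for $G$. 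Hence one may assume $\ggg$ is semisimple.

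For semisimple $\ggg$, I would fix a $G$-invariant positive definite inner product on $\ggg^*$. This associates to each $f \in \R[\ggg^*]$ a constant coefficient differential operator $\partial_{f}$, and $\langle p, q \rangle := (\partial_{p} q)(0)$ is a $G$-invariant positive definite inner product on each graded component of $\R[\ggg^*]$. Define
\[
  \HHH = \bigl\{\, h \in \R[\ggg^*] \bigm| \partial_{f} h = 0 \text{ for every } f \in \RG \text{ with } f(0) = 0 \,\bigr\},
\]
a graded $G$-submodule. From the adjunction $\langle fq, h \rangle = \langle q, \partial_{f} h \rangle$ one sees that in each degree $\HHH$ is exactly the orthogonal complement of $\RG_{+}\cdot\R[\ggg^*]$, where $\RG_{+}$ denotes the positive-degree part. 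Therefore $\R[\ggg^*] = \HHH \oplus \RG_{+}\cdot\R[\ggg^*]$, and iterating (equivalently, by graded Nakayama) the multiplication map $\mu\colon \RG \otimes \HHH \to \R[\ggg^*]$ is surjective, and automatically $G$-equivariant and $\RG$-linear by construction.

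The injectivity of $\mu$ is equivalent to $\R[\ggg^*]$ being free as an $\RG$-module, and this is the deep content of Kostant's theorem. I would either invoke Kostant directly, noting that the compact real form $G$ and its complexification yield the same invariants on $\ggg^*$ (resp.\ $\ggg_{\C}^{*}$) by Zariski density, so the classical complex statement transfers; or argue geometrically via Hironaka's miracle flatness criterion: $\RG$ is regular (a polynomial ring, by Chevalley), $\R[\ggg^*]$ is Cohen--Macaulay, and all fibres of the morphism $\ggg^* \to \mathrm{Spec}\,\RG$ have the same dimension $\dim\ggg - r$ -- the nontrivial case of the zero fibre being Kostant's computation that the nilpotent cone has codimension~$r$. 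Flatness of a graded module over a connected graded polynomial ring implies freeness, so $\mu$ is an isomorphism. The main obstacle is exactly this final step, since every route to freeness rests on the substantive facts that $\RG$ is polynomial and that all coadjoint fibres are equidimensional.
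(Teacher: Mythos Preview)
Your proposal is correct and follows essentially the same route as the paper: take $\HHH$ to be the space of harmonic polynomials and invoke Kostant's theorem, with a remark on passing between real and complex coefficients. The paper simply cites \cite[Thm.~0.2]{Kostant:1963} without spelling out the surjectivity argument or the reduction to the semisimple case; your additional details (the orthogonal-complement description of $\HHH$ and the alternative via miracle flatness) are sound but go beyond what the paper records.
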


The \(\RG\)-action on~\(\RG\otimes\HHH\) is on the first factor only.
Note that we necessarily have \(\HHH^{G}=\R\), the constant polynomials.

\begin{proof}
  See~\cite[Thm.~0.2]{Kostant:1963};
  the submodule~\(\HHH\) is given by the harmonic polynomials on~\(\ggg\).
  The case of complex coefficients that Kostant considers
  is obtained from the real case by extension of scalars, so the result holds already with real coefficients.
\end{proof}

From Lemma~\ref{thm:Sg-free} we get isomorphisms of \(\RG\)-modules
\begin{gather}
  \label{eq:iso-CGcX}
  \CarG^{*}(A) = \bigl(\R[\ggg^{*}]\otimes A\bigr)^{G}
    = \RG\otimes(\HHH\otimes A)^{G}, \\
  \label{eq:iso-Hom-RG}
  \Hom_{\RG}(\CarG^{*}(A),\RG) = \Hom_{\R}((\HHH\otimes A)^{G},\RG).
\end{gather}

\begin{lemma}
  \label{thm:fg-free}
  As a dg~\(\RG\)-module, \(\CarG^{*}(A)\) is homotopy equivalent to~\(\RG\otimes H^{*}(A)\)
  with some twisted differential.
\end{lemma}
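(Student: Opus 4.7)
The plan is to promote the $E_1$-page identification from Lemma~\ref{thm:Car-serre} to a chain-level statement via the basic perturbation lemma (BPL), using Kostant's splitting from Lemma~\ref{thm:Sg-free} to control the $\RG$-linearity. The starting observation is that for a $\Gstar$-module, the Cartan formula $L_{\xi}=d\,\iota_{\xi}+\iota_{\xi}\,d$ forces each Lie derivative $L_{\xi}$ to be chain-homotopic to zero, so $H^{*}(A)$ carries the trivial $\ggg$-action and, since $G$ is connected, the trivial $G$-action.

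First I would choose a \(G\)-equivariant strong deformation retract
\begin{equation*}
    i\colon H^{*}(A)\to A,\qquad p\colon A\to H^{*}(A),\qquad h\colon A\to A
\end{equation*}
with $pi=\mathrm{id}$, $\mathrm{id}-ip = dh+hd$, and the side conditions $h^{2}=0$, $ph=0$, $hi=0$. This is possible because representations of the compact group~$G$ on real vector spaces are completely reducible: one decomposes $A$ degree by degree as $d(A)\oplus K\oplus B$ with $d\colon B\to d(A)$ an isomorphism and $K$ chosen to map isomorphically onto $H^{*}(A)$, and averages the resulting splittings and contracting homotopy over $G$ via Haar measure; the side conditions are then arranged by the usual homotopy-transfer tricks.

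Tensoring over~$\R$ with $\R[\ggg^{*}]$ yields an $\R[\ggg^{*}]$-linear, $G$-equivariant SDR between $(\R[\ggg^{*}]\otimes A,\;1\otimes d_{A})$ and $(\R[\ggg^{*}]\otimes H^{*}(A),\;0)$. The full Cartan differential differs from $1\otimes d_{A}$ by the perturbation $\delta(f\otimes a)=-\sum_{k} fx_{k}\otimes\iota_{\xi_{k}}a$, which is $\R[\ggg^{*}]$-linear (it multiplies the polynomial factor by~$x_{k}$), $G$-equivariant, and, crucially, strictly raises the $\R[\ggg^{*}]$-polynomial degree while $1\otimes h$ preserves it. Hence $\delta\circ(1\otimes h)$ is locally nilpotent on each polynomial degree, and the BPL produces, by termwise-finite sums, an $\R[\ggg^{*}]$-linear and $G$-equivariant SDR between $(\R[\ggg^{*}]\otimes A,\;1\otimes d_{A}+\delta)$ and $(\R[\ggg^{*}]\otimes H^{*}(A),\;d_{\infty})$ for a twisted differential~$d_{\infty}$.

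Finally I pass to $G$-invariants. On the left this recovers $\CarG^{*}(A)$, while on the right, since $G$ acts trivially on $H^{*}(A)$, Lemma~\ref{thm:Sg-free} gives $(\R[\ggg^{*}]\otimes H^{*}(A))^{G}=\RG\otimes H^{*}(A)$, and $d_{\infty}$ restricts to an $\RG$-linear twisted differential on this module. The descended maps form an SDR, hence a homotopy equivalence, of dg~$\RG$-modules, as claimed. The main obstacle, as I see it, is purely organisational: one must ensure that the SDR on~$A$ is chosen so that taking $G$-invariants commutes with the formal perturbation sums and with the induced $\RG$-action, which is why both the equivariance of $(i,p,h)$ and the $\R[\ggg^{*}]$-linearity of $\delta$ are crucial in the step above.
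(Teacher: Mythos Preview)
Your argument is correct and in fact yields a slightly stronger conclusion (an $\RG$-linear strong deformation retract rather than just a homotopy equivalence), but it follows a genuinely different route from the paper. The paper first uses Kostant's decomposition~\eqref{eq:iso-CGcX} to see that $\CarG^{*}(A)$ is \emph{free} over~$\RG$, then invokes a general black-box result about bounded-below free dg~modules over a polynomial ring \cite[Cor.~B.2.4]{AlldayPuppe:1993}: any such module is $\RG$-homotopy equivalent to $\RG\otimes H^{*}(B)$ with twisted differential, where $B=\R\otimes_{\RG}\CarG^{*}(A)$. It then computes $H^{*}(B)\cong H^{*}(A)$ by a short spectral-sequence argument, again using Kostant. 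Your approach instead runs the perturbation lemma on $\R[\ggg^{*}]\otimes A$ \emph{before} taking invariants and only descends to~$\RG$ at the very end. The paper's route makes the role of freeness over~$\RG$ explicit and reuses an external result; yours is self-contained and produces the transferred differential by a visible formula.

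Two minor points. First, your nilpotence justification is phrased in terms of $\delta\circ(1\otimes h)$ raising polynomial degree, but raising an unbounded grading does not by itself give local nilpotence; what actually makes the BPL sums terminate is that $\delta\circ(1\otimes h)$ \emph{lowers} the $A$-degree by~$2$ and $A$ is bounded below (equivalently, in each fixed total degree only finitely many polynomial degrees occur). You should state this explicitly. Second, invoking Lemma~\ref{thm:Sg-free} for the identification $(\R[\ggg^{*}]\otimes H^{*}(A))^{G}=\RG\otimes H^{*}(A)$ is unnecessary: since $G$ acts trivially on $H^{*}(A)$, this follows directly from $\RG=\R[\ggg^{*}]^{G}$.
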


\begin{proof}
  Consider the complex
  \begin{equation}
    B = \R\otimes_{\RG}\CarG^{*}(A).
  \end{equation}
  Because \(\CarG^{*}(A)\) is free over~\(\RG\),
  it is \(\RG\)-homotopy equivalent to~\(\RG\otimes H^{*}(B)\)
  with some twisted differential
  by~\cite[Cor.~B.2.4]{AlldayPuppe:1993},
  \cf~also~\cite[Rem.~3.3]{AlldayFranzPuppe:orbits1}.
  We claim that \(H^{*}(B)\cong H^{*}(A)\).

  By~\eqref{eq:iso-CGcX} we have
  \( 
    B=(\HHH\otimes A)^{G}
  \) 
  with some differential.
  We filter this complex by degree in~\(A\). The \(E_{1}\)~page is
  \begin{equation}
    \label{eq:E1-HAG-HA}
    E_{1} = (\HHH\otimes H^{*}(A))^{G} = \HHH^{G}\otimes H^{*}(A) = H^{*}(A)
  \end{equation}
  because taking \(G\)-invariants and cohomology commute
  and \(G\) acts trivially in cohomology.
  The spectral sequence therefore degenerates, and
  \( 
    H^{*}(B) = H^{*}(A)
  \). 
\end{proof}

The canonical pairing~\(A^{\vee}\times A\to\R\), \((\phi,a)\mapsto\pair{\phi,a}\),
has the equivariant extension
\begin{equation}
  \CarG^{*}(A^{\vee})\times \CarG^{*}(A) \to \CarG^{*}(\R) = \RG,
\end{equation}
which is the restriction of the canonical pairing
\begin{equation}
  \R[\ggg^{*}]\otimes A^{\vee} \times \R[\ggg^{*}]\otimes A \to \R[\ggg^{*}],
  \qquad
  (f_{1}\otimes\phi,f_{2}\otimes a) \mapsto \pair{\phi,a} f_{1} f_{2}.
\end{equation}
Hence we get a morphism of dg~\(\RG\)-modules
\begin{equation}
  \label{eq:map-Car-Hom}
  \Phi\colon \CarG^{*}(A^{\vee}) 
  \to \Hom_{\RG}(\CarG^{*}(A),\RG).
\end{equation}
For~\(G=T\) a torus, it is an isomorphism
as one can see by comparing \eqref{eq:iso-CGcX} with~\eqref{eq:iso-Hom-RG}.
In general we have the following:

\begin{lemma}
  \label{thm:quiso-Car-Hom}
  The map~\(\Phi\)
  is a quasi-iso\-mor\-phism of dg~\(\RG\)-modules.
\end{lemma}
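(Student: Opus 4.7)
The plan is to show that $\Phi$ induces an isomorphism on the $E_1$-pages of suitable spectral sequences on both sides; since $A$ is bounded, the filtrations will be bounded below and the spectral sequences convergent, so this yields the quasi-isomorphism.

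For the left-hand side, filter $\CarG^*(A^\vee) = (\R[\ggg^*] \otimes A^\vee)^G$ by $F^p = (\R[\ggg^*]^{\ge p} \otimes A^\vee)^G$. The Cartan differential splits as $d_{A^\vee} + d_\iota$, where $d_{A^\vee}$ preserves polynomial degree and $d_\iota$ strictly raises it, so $d_0 = d_{A^\vee}$. Kostant's decomposition $\R[\ggg^*] = \RG \otimes \HHH$ (Lemma~\ref{thm:Sg-free}) together with the triviality of the $G$-action on cohomology then identifies $E_1^{\mathrm L} \cong \RG \otimes H^*(A^\vee)$, exactly as in Lemma~\ref{thm:Car-serre}.

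For the right-hand side, I would invoke Lemma~\ref{thm:fg-free}: as dg~$\RG$-modules, $\CarG^*(A)$ is homotopy equivalent to $(\RG \otimes H^*(A), d')$ with a twisted differential $d'$ whose nonzero components all strictly raise the polynomial $\RG$-degree (they correspond to the higher differentials of the spectral sequence of Lemma~\ref{thm:Car-serre}). Since both complexes are free over~$\RG$, applying $\Hom_{\RG}(-,\RG)$ preserves this equivalence, so
\[
  \Hom_{\RG}(\CarG^*(A), \RG) \simeq \Hom_\R(H^*(A), \RG),
\]
which, by boundedness of $A$, equals $\RG \otimes H^*(A)^\vee$. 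Filtering by polynomial degree in the $\RG$-factor, the transpose of $d'$ strictly raises the target polynomial degree, so the $E_0$-differential vanishes and $E_1^{\mathrm R} \cong \RG \otimes H^*(A)^\vee$. Because dualization is exact on vector spaces, the bounded complex $A$ satisfies $H^*(A^\vee) \cong H^*(A)^\vee$ canonically, and since $\Phi$ is built from the evaluation pairing, chasing the identifications shows that the map $E_1^{\mathrm L} \to E_1^{\mathrm R}$ induced by $\Phi$ is precisely this canonical isomorphism (extended $\RG$-linearly).

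The main obstacle is choosing the homotopy equivalence of Lemma~\ref{thm:fg-free} compatibly with dualization and with the map $\Phi$, since that equivalence is not canonical. An alternative is to work directly with Kostant's decomposition on both sides and to verify, by a careful calculation, that the contribution of the $\iota$-part of the Cartan differential to the $E_0$-differential on the right-hand side vanishes; here one must check that, although $\iota_\xi$ can acquire an $\RG^0$-component when $\R[\ggg^*] = \RG \otimes \HHH$ is used to re-express $x_k \cdot h$, its dualized action still strictly raises the target polynomial degree. Once either route is in place, the identification of $\Phi$ on $E_1$ as the evaluation map is routine.
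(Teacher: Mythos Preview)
Your overall architecture matches the paper's: filter both sides, show $\Phi$ induces an isomorphism on~$E_1$, and conclude. But the obstacle you flag---compatibility of the non-canonical homotopy equivalence from Lemma~\ref{thm:fg-free} with~$\Phi$---is real, and you do not resolve it; as written the argument is incomplete.

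The paper sidesteps this by choosing the filtrations so that~$\Phi$ respects them \emph{before} any replacement is made. Rather than filtering the source by $\R[\ggg^*]$-degree and the (replaced) target by $\RG$-degree, the paper uses Kostant's decomposition to rewrite both source and target as
\[
  \CarG^*(A^\vee)=\RG\otimes(\HHH\otimes A^\vee)^G,
  \qquad
  \Hom_{\RG}(\CarG^*(A),\RG)=\Hom_\R\bigl((\HHH\otimes A)^G,\RG\bigr),
\]
and filters both by degree in~$\RG$. Since~$\Phi$ is $\RG$-linear, it is automatically filtration-preserving, and one can compute $E_1(\Phi)$ directly: both $E_1$-pages are $\RG\otimes H^*(A)^\vee$ (the identification $H^*\bigl((\HHH\otimes A^{(\vee)})^G\bigr)=H^*(A)^{(\vee)}$ is exactly the calculation in the proof of Lemma~\ref{thm:fg-free}), and tracking a representative $f\otimes\phi$ with $\phi$ a $G$-invariant cocycle shows that $E_1(\Phi)$ is the identity. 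Your worry about an $\RG^0$-component of the $\iota$-part is absorbed into this computation: whatever $d_0$ is on $(\HHH\otimes A)^G$, its cohomology is $H^*(A)$ by that lemma.

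Lemma~\ref{thm:fg-free} is then invoked only \emph{afterwards}, and only for convergence on the $\Hom$-side: the $\RG$-homotopy equivalence $\CarG^*(A)\simeq\RG\otimes H^*(A)$ dualizes to one between $\Hom_{\RG}(\CarG^*(A),\RG)$ and $\Hom_\R(H^*(A),\RG)$, compatible with the $\RG$-filtration, so the two spectral sequences agree from~$E_1$ on and the latter visibly converges. Decoupling ``$E_1(\Phi)$ is an isomorphism'' from ``the target spectral sequence converges'' is precisely what eliminates your compatibility problem.
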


\begin{proof}
  We filter \eqref{eq:iso-CGcX} (with~\(A^{\vee}\) instead of~\(A\)) and~\eqref{eq:iso-Hom-RG} by degree in~\(\RG\);
  this is compatible with the map~\eqref{eq:map-Car-Hom}.
  The differentials~\(d_{0}\) on both \(E_{0}\)~pages come from the differential in~\(A\),
  hence the \(E_{1}\)~pages are
  \begin{align}
    \label{eq:E1-1}
    E_{1} &= \RG\otimes H^{*}\bigl( (\HHH\otimes A^{\vee})^{G}\bigr) = \RG\otimes H^{*}(A)^{\vee} \\
    \shortintertext{in the first case, and}
    \label{eq:E1-2}
    E_{1} &= \Hom_{\R}(H^{*}(A),\RG) = \RG\otimes H^{*}(A)^{\vee}
  \end{align}
  in the second.
  We claim that the map~\(E_{1}(\Phi)\) between them 
  is an isomorphism:
  Represent an element of~\eqref{eq:E1-1} by~\(f\otimes\phi\)
  where \(\phi\) is a \(G\)-invariant cycle in~\(A^{\vee}\) 
  and \(f\in\RG\). It maps again to~\(f\otimes[\phi]\) in~\eqref{eq:E1-2}, proving the claim.
  We therefore get an isomorphism between the \(E_{\infty}\)~pages.

  By assumption, \(\CarG^{*}(A^{\vee})\) is bounded below,
  so that the spectral sequence~\eqref{eq:E1-1} converges naively.
  The \(\RG\)-homotopy equivalence give by Lemma~\ref{thm:fg-free} induces one
  between~\eqref{eq:iso-Hom-RG} and
  \begin{equation}
    \label{eq:E1-3}
    \Hom_{\RG}(\RG\otimes H^{*}(A),\RG) = \Hom_{\R}(H^{*}(A),\RG),
  \end{equation}
  which is compatible with the filtration by \(\RG\)-degree.
  Moreover, the map between the \(E_{0}\)~pages of the associated spectral sequences
  comes from the
  induced homotopy equivalence between~\((\HHH\otimes A^{\vee})^{G}\) and~\(H^{*}(A)\),
  hence is itself a homotopy equivalence.
  From the \(E_{1}\)~pages on, the spectral sequence~\eqref{eq:E1-2} therefore
  coincides with the one for~\eqref{eq:E1-3}, for which convergence is again naive.
  Thus no convergence issues arise, and
  we conclude that \(H^{*}(\Phi)\) is an isomorphism.
\end{proof}

\begin{proposition}[Universal coefficient theorem]
  \label{thm:UCT-alg}
  Let \(A\) be a bounded \(\Gstar\)-module.
  There is a spectral sequence, natural in~\(A\), with
  \begin{equation}
    E_{2} = \Ext_{\RG}(\HG^{*}(A),\RG) \;\Rightarrow\; \HG^{*}(A^{\vee}).
  \end{equation}
\end{proposition}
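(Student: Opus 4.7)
The proof strategy is dictated by Lemma~\ref{thm:quiso-Car-Hom}, which furnishes a natural quasi-isomorphism
\(\Phi\colon \CarG^{*}(A^{\vee}) \to \Hom_{\RG}(\CarG^{*}(A),\RG)\)
of dg~\(\RG\)-modules, hence a natural isomorphism
\(\HG^{*}(A^{\vee}) \cong H^{*}(\Hom_{\RG}(\CarG^{*}(A),\RG))\).
It therefore suffices to produce a natural spectral sequence with \(E_{2} = \Ext_{\RG}(\HG^{*}(A),\RG)\) converging to the cohomology on the right.

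The crucial algebraic observation is that, by the decomposition~\eqref{eq:iso-CGcX} coming from Kostant's theorem (Lemma~\ref{thm:Sg-free}), the underlying graded \(\RG\)-module of \(\CarG^{*}(A)\) is free. Consequently \(\CarG^{*}(A)\) is semifree, and hence cofibrant, as a dg~\(\RG\)-module, so that \(\Hom_{\RG}(\CarG^{*}(A),\RG)\) already represents the derived functor \(R\Hom_{\RG}(\CarG^{*}(A),\RG)\). I then plan to build the desired spectral sequence as the standard hyper-Ext spectral sequence for a bounded-below dg module: choose a free resolution \(P_{\bullet}\twoheadrightarrow\HG^{*}(A)\) of graded \(\RG\)-modules and lift it in Cartan--Eilenberg fashion to a double complex \(\tilde P_{\bullet,\bullet}\) of free graded \(\RG\)-modules, together with a quasi-isomorphism \(\operatorname{Tot}(\tilde P)\to\CarG^{*}(A)\) — concretely by lifting bases of each \(P_{p}\) to cocycles representing the corresponding classes in \(\HG^{*}(A)\) and iterating to resolve the kernels. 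Applying \(\Hom_{\RG}(-,\RG)\) to \(\operatorname{Tot}(\tilde P)\) yields a double complex whose filtration by the resolution index has \(E_{1}\)-page \(\Hom_{\RG}(P_{\bullet},\RG)\), hence \(E_{2} = \Ext_{\RG}(\HG^{*}(A),\RG)\); the other spectral sequence of the same double complex degenerates (thanks to semifreeness of \(\CarG^{*}(A)\)) to \(H^{*}(\Hom_{\RG}(\CarG^{*}(A),\RG))=\HG^{*}(A^{\vee})\), giving the claimed convergence.

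The technical nuisance — and the main obstacle — is convergence and naturality. Boundedness of \(A\) makes \(\CarG^{*}(A)\) bounded below, so \(\tilde P\) can be arranged to sit in a first quadrant after suitable re-indexing, and both spectral sequences converge strongly. Naturality in \(A\) follows from uniqueness of the Cartan--Eilenberg lift up to chain homotopy, which makes the \(E_{2}\)-page and the abutment natural from \(E_{2}\) onwards; once cofibrancy of \(\CarG^{*}(A)\) is in hand, the rest is standard homological bookkeeping.
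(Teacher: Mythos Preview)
Your strategy matches the paper's: identify the abutment via Lemma~\ref{thm:quiso-Car-Hom}, then exploit the \(\RG\)-freeness of the Cartan model (from Kostant's theorem) to set up a hyper-\(\Ext\) spectral sequence. The one substantive difference is that the paper first invokes Lemma~\ref{thm:fg-free} to replace \(\CarG^{*}(A)\) by the \(\RG\)-homotopy-equivalent model \(\RG\otimes H^{*}(A)\), which is free on generators of \emph{bounded} degree --- precisely the hypothesis under which the argument of~\cite[Prop.~3.5]{AlldayFranzPuppe:orbits1} carries over verbatim. You skip this step and use \(\CarG^{*}(A)=\RG\otimes(\HHH\otimes A)^{G}\) directly; but for non-abelian~\(G\) the harmonic space~\(\HHH\) is infinite-dimensional, so your free generators are unbounded above and the ``first-quadrant'' claim is not literally true. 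This is not fatal --- the resolution-index filtration has length at most~\(r\) (the global dimension of~\(\RG\)), so convergence is automatic, and the abutment is identified because a quasi-isomorphism between bounded-below complexes of free \(\RG\)-modules is already a homotopy equivalence (which is the correct justification, rather than ``the other spectral sequence degenerates''). But this extra care is exactly why the paper routes through Lemma~\ref{thm:fg-free}.
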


\begin{proof}
  By Lemma~\ref{thm:fg-free},
  \(\CarG^{*}(A)\) is \(\RG\)-homotopy equivalent
  to a dg~\(\RG\)-module which is free over~\(\RG\)
  on generators of bounded degrees.
  For such dg~\(\RG\)-modules the proof of~\cite[Prop.~3.5]{AlldayFranzPuppe:orbits1}
  carries over and establishes a spectral sequence with the given \(E_{2}\)~page
  and converging to the cohomology of~\(\Hom_{\RG}(\CarG^{*}(A),\RG)\),
  which is equal to~\(\HG^{*}(A^{\vee})\) by Lemma~\ref{thm:quiso-Car-Hom}.
\end{proof}

\section{Equivariant homology and cohomology}
\label{sec:equiv-cohom-hom}

\subsection{Equivariant de~Rham cohomology}

Although we are primarily interested in manifolds,
singular spaces will inevitably come up in our discussion of the orbit filtration.
Because the Cartan model for equivariant cohomology has technical advantages,
we discuss how to extend it to singular spaces embedded in manifolds and, more generally, in orbifolds.

\subsubsection{Closed supports}

Let \(Y\subset X\) be closed subsets of an orbifold~\(Z\).
We write \(H^{*}(X,Y)\) for the Alexander--Spanier cohomology of the pair~\((X,Y)\) with closed supports.
Our starting point for a de~Rham model is the tautness property of Alexander--Spanier cohomology,
\begin{equation}
  \label{eq:tautness-HXY}
  H^{*}(X,Y) = \dirlim H^{*}(U,V),
\end{equation}
where the direct limit is taken over all open neighbourhood pairs~\((U,V)\) of~\((X,Y)\),
see~\cite[Cor.~6.6.3]{Spanier:1966}.
Since \(U\) and \(V\) are orbifolds, \(H^{*}(U,V)\) may be interpreted as singular cohomology
or, if \(V=\emptyset\), as de~Rham cohomology.

Because direct limits are exact functors, we have \(H^{*}(X)=H^{*}(\Omega_{Z}^{*}(X))\),
where the dg~algebra~\(\Omega_{Z}^{*}(X)\) of germs at~\(X\)
of differential forms on~\(Z\) is defined as
\begin{equation}
  \Omega_{Z}^{*}(X) = \dirlim \Omega^{*}(U) \,;
\end{equation}
the direct limit ranges over all open neighbourhoods~\(U\) of~\(X\) in~\(Z\);
\(\Omega_{Z}^{*}(Y)\) is defined analogously.
Let \(\Omega_{Z}^{*}(X,Y)\) be the kernel
of the restriction map~\(\Omega^{*}_{Z}(X)\to\Omega^{*}_{Z}(Y)\);
it is a dg~module over~\(\Omega^{*}_{Z}(X)\).

\begin{lemma}
  \label{thm:iso-as-dr}
  As a graded vector space, \(H^{*}(\Omega_{Z}^{*}(X,Y))\) is
  naturally isomorphic to \(H^{*}(X,Y)\),
  the Alexander--Spanier cohomology of the pair~\((X,Y)\).
  This isomorphism is compatible with the module structure
  over~\(H^{*}(\Omega_{Z}^{*}(X))=H^{*}(X)\).
\end{lemma}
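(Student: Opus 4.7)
The plan is to deduce the relative statement from the absolute one by comparing long exact sequences. The key input is that the natural restriction map $\Omega_Z^{*}(X)\to\Omega_Z^{*}(Y)$ is surjective, so that the kernel fits into a short exact sequence and the usual five-lemma argument applies.

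First I would verify surjectivity. Given a germ $\omega\in\Omega_Z^{*}(Y)$, represented by $\tilde\omega\in\Omega^{*}(V)$ for some open $V\supset Y$ in $Z$, pick an open $V'$ with $Y\subset V'\subset\overline{V'}\subset V$ (possible by paracompactness of $Z$) together with a smooth bump function $\chi$ on $Z$ supported in $V$ and equal to~$1$ on~$V'$. Then $\chi\tilde\omega$, extended by zero outside $V$, is a smooth form on all of $Z$; in particular it defines a germ at $X$, and it agrees with $\tilde\omega$ on $V'\supset Y$, hence represents the same germ at $Y$. The orbifold case is identical, using invariant data on local uniformising charts and invariant partitions of unity.

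Consequently there is a short exact sequence of dg~\(\Omega_Z^{*}(X)\)-modules
\[
  0\to\Omega_Z^{*}(X,Y)\to\Omega_Z^{*}(X)\to\Omega_Z^{*}(Y)\to 0,
\]
whose long exact sequence in cohomology maps naturally to the Alexander--Spanier long exact sequence of the pair~$(X,Y)$. On the absolute terms the vertical arrows are the already-established isomorphisms $H^{*}(\Omega_Z^{*}(X))=H^{*}(X)$ and $H^{*}(\Omega_Z^{*}(Y))=H^{*}(Y)$, so the five lemma will yield the asserted isomorphism $H^{*}(\Omega_Z^{*}(X,Y))=H^{*}(X,Y)$, natural in~$(X,Y)$. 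Compatibility with the $H^{*}(X)$-action is automatic since every map in sight is $\Omega_Z^{*}(X)$-linear.

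The main obstacle is making the comparison between the two long exact sequences functorial, in particular on connecting homomorphisms. I would handle this by realising both sides as direct limits over open neighbourhood pairs~$(U,V)$ of~$(X,Y)$ in~$Z$: tautness~\eqref{eq:tautness-HXY} provides this description on the Alexander--Spanier side, while the analogous identification $\Omega_Z^{*}(X)=\dirlim\Omega^{*}(U)$ (and similarly for $Y$ and, via the kernel, for the pair) does so on the de~Rham side. The comparison then reduces to the classical naturality of the de~Rham isomorphism on each open orbifold, together with exactness of direct limits, from which the matching of connecting maps follows.
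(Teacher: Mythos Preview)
Your approach is essentially the paper's: establish surjectivity of the restriction \(\Omega_Z^{*}(X)\to\Omega_Z^{*}(Y)\) via a bump function, get a short exact sequence, and compare it with the Alexander--Spanier sequence through direct limits over open neighbourhood pairs, concluding with the five lemma. The paper makes the intermediary explicit by passing through the direct limit of smooth singular cochain complexes \(S_\infty^{*}(U,V)\), which gives a concrete chain-level comparison map and makes the matching of connecting homomorphisms immediate from a commutative diagram of short exact sequences; your last paragraph is the same argument stated more tersely.

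One point where you are too quick is the module-structure claim. The sentence ``automatic since every map in sight is \(\Omega_Z^{*}(X)\)-linear'' is not right: the de~Rham comparison map (integration over simplices) from \(\Omega^{*}(U)\) to singular or Alexander--Spanier cochains is \emph{not} multiplicative, hence not \(\Omega_Z^{*}(X)\)-linear, at the chain level. What you need is that it induces a ring map on cohomology and, more generally, respects cup products between absolute and relative cohomology. The paper handles this by invoking the compatibility of the vertical comparison maps with sheaf-theoretic cup products (citing Warner and Bredon); you should do the same rather than asserting chain-level linearity.
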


\begin{proof}
  \def\Hs{H_{\infty}}
  \def\Ss{S_{\infty}}
  The Alexander--Spanier cohomology of an open pair~\((U,V)\) in~\(Z\)
  is naturally isomorphic to the cohomology~\(\Hs^{*}(U,V)\)
  of the complex~\(\Ss^{*}(U,V)\) of smooth singular cochains.
  (For the manifold case see~\cite[Sec.~5.32]{Warner:1983}, for instance.
  It follows for orbifolds by looking at uniformizing charts.)
  We therefore have
  \begin{equation}
    \label{eq:iso-AS-sing-taut}
    H^{*}(X,Y) = \dirlim \Hs^{*}(U,V) = H^{*}\bigl(\dirlim \Ss^{*}(U,V)\bigr).
  \end{equation}
  From the de~Rham theorem, 
  we obtain a natural quasi-isomorphism
  \begin{equation}
    \Omega_{Z}(X) = \dirlim \Omega^{*}(U) \to \dirlim \Ss^{*}(U).
  \end{equation}
  Naturality yields the left vertical arrow in the commutative diagram
  \begin{equation}
    \label{eq:diag-Omega-S}
    \begin{tikzcd}
      0 \arrow{r} & \Omega_{Z}^{*}(X,Y) \arrow{d} \arrow{r} & \Omega_{Z}^{*}(X) \arrow{d} \arrow{r} & \Omega_{Z}^{*}(Y) \arrow{d} \arrow{r} & 0 \\
      0 \arrow{r} & \dirlim \Ss^{*}(U,V) \arrow{r} & \dirlim \Ss^{*}(U) \arrow{r} & \dirlim \Ss^{*}(V) \arrow{r} & 0 \mathrlap{.}
    \end{tikzcd}
  \end{equation}
  The top row is exact:
  Given a differential form~\(\alpha\) on~\(V\supset Y\), choose a smooth function~\(f\)
  on~\(Z\) with support in~\(V\)
  and identically equal to~\(1\) in a smaller neighbourhood~\(V'\) of~\(Y\).
  Then \(f\alpha\) is defined on all of~\(Z\) and restricts to the same germ at~\(Y\) as~\(\alpha\).

  Since the bottom row of~\eqref{eq:diag-Omega-S} is also exact, we conclude from the five lemma
  that the left vertical arrow is a quasi-isomorphism.
  Together with~\eqref{eq:iso-AS-sing-taut} this establishes the claimed isomorphism
  of graded vector spaces.

  That the module structures coincide follows from the fact that
  the isomorphisms induced by the vertical arrows in~\eqref{eq:diag-Omega-S}
  are compatible with (sheaf-theoretic) cup products,
  \cf~\cite[Th.~5.45]{Warner:1983} and~\cite[Sec.~II.7, Thm.~III.3.1]{Bredon:1997}.
\end{proof}

Now assume that \(Z\) is a \(G\)-orbifold and the pair~\((X,Y)\) \(G\)-stable.
We can always assume germs of differential forms at~\(X\) to be defined on \(G\)-stable open neighbourhoods,
which are cofinal among all open neighbourhoods of~\(X\). Hence \(\Omega_{Z}^{*}(X)\) is
a \(\Gstar\)-module, and the same applies to~\(\Omega_{Z}^{*}(Y)\) and \(\Omega_{Z}^{*}(X,Y)\).
So we can consider their Cartan models.
(See~\cite[p.~245]{Meinrenken:1998} for the Cartan model for orbifolds.)
We compare the cohomology of~\(\Omega_{Z}^{*}(X,Y)\) to~\(H_{G}^{*}(X,Y)\),
the Borel \(G\)-equivariant Alexander--Spanier cohomology of the pair~\((X,Y)\).

\begin{lemma}
  \label{thm:iso-as-dr-equiv}
  As \(\RG\)-module,
  \(\HG^{*}(\Omega_{Z}^{*}(X,Y))\) is naturally isomorphic to \(H_{G}^{*}(X,Y)\).
\end{lemma}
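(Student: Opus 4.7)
The plan is to upgrade the direct-limit argument of Lemma~\ref{thm:iso-as-dr} to the Cartan model. Let~$(U,V)$ range over the $G$-stable open neighbourhood pairs of~$(X,Y)$ in~$Z$; averaging an arbitrary open neighbourhood over~$G$ shows that these are cofinal among all such pairs, so $\Omega_{Z}^{*}(X,Y)=\dirlim \Omega^{*}(U,V)$ as $\Gstar$-modules.

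First I would observe that the Cartan construction \(\CarG^{*}(-)=(\R[\ggg^{*}]\otimes -)^{G}\) commutes with direct limits: tensoring with~\(\R[\ggg^{*}]\) and taking cohomology commute with direct limits as always, and since~$G$ is compact, averaging makes the functor of $G$-invariants exact, and in particular colimit-preserving. Hence
\begin{equation*}
  \HG^{*}(\Omega_{Z}^{*}(X,Y)) = \dirlim \HG^{*}(\Omega^{*}(U,V))
\end{equation*}
as \(\RG\)-modules, naturally in the pair.

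Next, for each open orbifold pair~\((U,V)\), the equivariant de~Rham theorem in the orbifold setting (see~\cite[p.~245]{Meinrenken:1998} for the absolute case) identifies \(\HG^{*}(\Omega^{*}(U,V))\) with the Borel equivariant Alexander--Spanier cohomology \(H_{G}^{*}(U,V)\) as \(\RG\)-modules. The relative version follows from the five lemma applied to the long exact sequences of the pair on both sides: the Cartan construction is exact in its argument (the functor $\R[\ggg^{*}]\otimes -$ is exact and $G$-invariants split off isotypical summands), and the exactness of the short sequence of differential forms is the $G$-equivariant analogue of the germ-extension trick in the proof of Lemma~\ref{thm:iso-as-dr}, using a $G$-averaged cut-off function.

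It then remains to show \(H_{G}^{*}(X,Y)=\dirlim H_{G}^{*}(U,V)\), that is, the tautness of Borel equivariant Alexander--Spanier cohomology. This I would deduce from ordinary tautness~\eqref{eq:tautness-HXY} by applying the latter inside finite-dimensional approximations \((E_{n}\times_{G} Z,\,E_{n}\times_{G} X,\,E_{n}\times_{G} Y)\) of the Borel construction, using that $G$-stable open neighbourhoods of~$X$ in~$Z$ correspond bijectively to open neighbourhoods of $E_{n}\times_{G} X$ in $E_{n}\times_{G} Z$ and remain cofinal; passing to the limit in~$n$ gives tautness for $H_{G}^{*}$. Chaining the three identifications yields the asserted natural isomorphism. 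The one thing to watch is preservation of the \(\RG\)-module structure, but this is automatic: on the Cartan side \(\RG\) acts by multiplication on the first factor of $\R[\ggg^{*}]\otimes\Omega^{*}(U,V)$, on the Borel side by pullback along the classifying map $X_{G}\to BG$, and the de~Rham comparison quasi-isomorphism intertwines the two by naturality. The main obstacle is bookkeeping rather than anything conceptual: making sure that cofinality of $G$-stable neighbourhoods, exactness of the Cartan functor, and compatibility with the $\RG$-action all survive the passage to the direct limit.
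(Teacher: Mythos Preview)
Your approach is essentially the paper's: both pass through compact approximations~\(EG_{k}\) of~\(EG\), use cofinality of \(G\)-stable neighbourhoods (which is exactly your tautness step, since neighbourhoods of~\(X\times_{G}EG_{k}\) are cofinally of the form~\(U\times_{G}EG_{k}\)), and invoke the five lemma for the relative case. The only organisational difference is that the paper writes down a single natural chain map \(\CarG(\Omega_{Z}^{*}(X,Y)) \to \Omega^{*}_{Z\times_{G}EG_{k}}(X\times_{G} EG_{k},Y\times_{G} EG_{k})\) via the Weil model and checks it is an isomorphism in each degree for \(k\gg n\), whereas you factor the comparison into three separate isomorphisms; the paper's formulation makes naturality and the \(\RG\)-module compatibility automatic, while you have to chase them through the chain.

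One small point to tighten: exactness of \((-)^{G}\) does not by itself imply preservation of filtered colimits. What you actually need is that a \(G\)-invariant element of \(\dirlim V_{i}\) is represented by a \(G\)-invariant element of some \(V_{i}\); this follows directly from the averaging projection \(v\mapsto\int_{G}g\cdot v\,dg\), which is defined on each \(V_{i}\) and commutes with the transition maps.
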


In particular, \(\RG\) is isomorphic to~\(H^{*}(BG)\),
the cohomology algebra of the classifying space of~\(G\).

\begin{proof}
  Recall the natural isomorphism
  \begin{equation}
    \HG^{n}(X,Y) = H^{n}(X\times_{G} EG_{k},Y\times_{G} EG_{k})
  \end{equation}
  where \((EG_{k})\) is a family of
  compact free \(G\)-manifolds approximating~\(EG\)
  and \(k\) is chosen sufficiently large compared to~\(n\).
  By Lemma~\ref{thm:iso-as-dr}, this implies
  \begin{equation}
    \HG^{n}(X,Y) = H^{n}(\Omega_{Z\times_{G}EG_{k}}(X\times_{G} EG_{k},Y\times_{G} EG_{k}))
  \end{equation}
  for~\(k\gg n\).
  Also by Lemma~\ref{thm:iso-as-dr} and naturality,
  this isomorphism is compatible with the module structures
  over~\(\HG^{*}({\rm pt})=H^{*}(BG)\).
  
  It therefore suffices to find a family of natural morphisms of dg~algebras
  \begin{equation}
    \CarG(\Omega_{Z}^{*}(X,Y)) \to \Omega^{*}_{Z\times_{G}EG_{k}}(X\times_{G} EG_{k},Y\times_{G} EG_{k})
  \end{equation}
  that induce isomorphisms in any given cohomological degree~\(n\ll k\).
  
  For the case~\((X,Y)=(Z,\emptyset)\),
  this can be found in~\cite[Sec.~2.5]{GuilleminSternberg:1999},
  modulo the natural isomorphism between the  Cartan model
  and the  Weil model for equivariant cohomology,
  \cf~\cite[Thm.~4.2.1]{GuilleminSternberg:1999}.
  This implies in particular~\(\RG=H^{*}(BG)\).
  Noting that due to the compactness of~\(EG_{k}\)
  neighbourhoods of the form~\(U\times_{G} EG_{k}\)
  are cofinal among all neighbourhoods of~\(X\times_{G} EG_{k}\),
  we get the absolute case for arbitrary~\(X\) by taking direct limits
  as in the proof of Lemma~\ref{thm:iso-as-dr}.
  The relative case again follows by the five lemma.
\end{proof}

\begin{remark}
  \label{rem:HG-fg}
  Lemma~\ref{thm:Car-serre} implies
  that \(\HG^{*}(X,Y)\) is finitely generated over~\(\RG\)
  if \(H^{*}(X,Y)\) is finite-dimensional.
  The converse is a consequence of the Eilenberg--Moore theorem.  
\end{remark}

\begin{lemma}
  \label{thm:cond-C}
  If \(G\) acts locally freely on~\(X\setminus Y\), then
  \(\Omega_{Z}^{*}(X,Y)\) is a \(\Wstar\)-module,
  and
  \begin{equation*}
    \Omega_{Z}^{*}(X,Y)_{\rm bas} = \Omega_{Z/G}^{*}(X/G,Y/G).
  \end{equation*}
  In particular, \(\HG^{*}(X,Y)=H^{*}(X/G,Y/G)\).
\end{lemma}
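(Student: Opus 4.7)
The plan is to localize the problem to the locally free region $W = \{z \in Z \mid \dim G_{z} = 0\}$, which is open and $G$-stable by upper semicontinuity of isotropy dimension. The hypothesis gives $X \setminus W \subset Y$, equivalently $X \subset W \cup Y$. This allows me to replace any open neighborhood pair $(U, V)$ of $(X, Y)$ in $Z$ by $(U \cap (W \cup V), V)$, producing a cofinal subsystem of pairs satisfying $U \setminus V \subset W$ in the direct limit defining $\Omega_{Z}^{*}(X, Y)$.

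For such pairs I would establish an isomorphism $\Omega^{*}(U, V) \cong \Omega^{*}(U \cap W, V \cap W)$ by restriction. Injectivity holds because $U \subset W \cup V$ forces any form vanishing on both $V$ and $U \cap W$ to be zero. For surjectivity, a form $\beta \in \Omega^{*}(U \cap W, V \cap W)$ has support in $(U \cap W) \setminus (V \cap W) = U \setminus V \subset W$; since $U \setminus W \subset V$, $\beta$ vanishes in a $W$-neighborhood of each point of $U \setminus W$, so extension by zero across $U \setminus W$ gives a smooth form on $U$ that vanishes on $V$. Because $G$ acts locally freely on $W$, a principal $G$-connection makes $\Omega^{*}(U \cap W)$ a $\Wstar$-module, the subcomplex $\Omega^{*}(U \cap W, V \cap W)$ inherits this structure, and the ($G$-equivariant) isomorphism transports it to $\Omega^{*}(U, V)$. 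Taking the filtered colimit, which commutes with the $\Gstar$- and $\Wstar$-operations, endows $\Omega_{Z}^{*}(X, Y)$ with a $\Wstar$-module structure.

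Since the basic subcomplex is cut out by pointwise linear conditions and commutes with filtered colimits, the same isomorphism identifies $\Omega^{*}(U, V)_{\rm bas}$ with $\Omega^{*}(U \cap W, V \cap W)_{\rm bas} = \Omega^{*}((U \cap W)/G, (V \cap W)/G)$ by the classical Cartan theorem for locally free actions. Passing to the colimit produces $\Omega_{Z}^{*}(X, Y)_{\rm bas}$, which by the intended definition of the right-hand side is $\Omega_{Z/G}^{*}(X/G, Y/G)$. For the last assertion I would combine Lemmas~\ref{thm:iso-as-dr-equiv} and~\ref{thm:retract-CarGA-Abas} to get $\HG^{*}(X, Y) = H^{*}(\Omega_{Z}^{*}(X, Y)_{\rm bas})$, apply the de~Rham theorem to the orbifold $W/G$ at each stage, and use excision on the topological quotient $Z/G$ (via the fact that $(U \setminus W)/G$ is closed in $U/G$ and contained in $V/G$) to conclude $\HG^{*}(X, Y) = H^{*}(X/G, Y/G)$.

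The main obstacle I anticipate is the smoothness of the extension by zero in the second paragraph; it relies on the key geometric fact $U \setminus W \subset V$, which guarantees that every point of $U$ outside the locally free region already lies in the zero-set of $\beta$, so that the two local pieces of the extension glue smoothly.
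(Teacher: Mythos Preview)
Your reduction to the locally free region~\(W\) and the isomorphism \(\Omega^{*}(U,V)\cong\Omega^{*}(U\cap W,V\cap W)\) via extension by zero is correct and is essentially the same manoeuvre the paper performs; the paper just organizes the colimit differently, first rewriting \(\Omega_{Z}^{*}(X,Y)=\dirlim_{B}\Omega_{Z}^{*}(X,B)\) over \emph{closed} neighbourhoods~\(B\) of~\(Y\) in~\(X\) and then excising~\(Y\) to get closed pairs \((X\setminus Y,\,B\setminus Y)\) inside~\(W\). Both routes yield the \(\Wstar\)-structure and the identification of the basic subcomplex.

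The genuine gap is in your final step. You propose to ``apply the de~Rham theorem to the orbifold~\(W/G\) at each stage'', i.e.\ to identify \(H^{*}\bigl(\Omega^{*}((U\cap W)/G,(V\cap W)/G)\bigr)\) with the Alexander--Spanier cohomology of the open pair \(((U\cap W)/G,(V\cap W)/G)\). This fails in general: for an open subset~\(V'\subset U'\), the complex of forms on~\(U'\) vanishing on~\(V'\) need not compute \(H^{*}(U',V')\). For instance, with \(U'=\R\) and \(V'=\R\setminus\{0\}\) every smooth form vanishing on~\(V'\) is identically zero, so the de~Rham side vanishes, whereas \(H^{1}(\R,\R\setminus\{0\})\cong\R\). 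The underlying reason is that the restriction \(\Omega^{*}(U')\to\Omega^{*}(V')\) is not surjective for open~\(V'\), so the usual five-lemma argument breaks down; equivalently, fine sheaves are not \(\Gamma_{C}\)-acyclic for closed~\(C\). Your excision and tautness steps are fine, but they presuppose this de~Rham identification.

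The paper's use of closed neighbourhoods~\(B\) is precisely what repairs this: the pairs \((X\setminus Y,B\setminus Y)\) are \emph{closed} in~\(W\), so Lemma~\ref{thm:iso-as-dr} applies termwise and gives the de~Rham isomorphism before one passes to the colimit. If you want to keep your open-neighbourhood framework, you can interpolate a closed pair by choosing, for each~\((U,V)\), a \(G\)-stable closed set~\(C\) with \(X\setminus Y\subset C\subset U\setminus V\) (using normality) and working with \(\Omega_{W}^{*}(C)\) instead.
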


If \(G\) acts locally freely on all of~\(Z\), then \(Z/G\) is an orbifold
and (\(X/G,Y/G)\) is a closed pair in~\(Z/G\), so that \(\Omega_{Z/G}^{*}(X/G,Y/G)\)
has been defined above.
The definition of~\(\Omega_{Z/G}^{*}(X/G,Y/G)\) in the general case
will be given in the proof.

\begin{proof}
  Assume first that \(G\) acts locally freely on~\(Z\), and choose a connection form for~\(Z\).
  Then the de~Rham complex~\(\Omega^{*}(U)\) of any \(G\)-stable open subset~\(U\subset Z\)
  is naturally a \(\Wstar\)-module,
  hence so are \(\Omega_{Z}^{*}(X)\) and~\(\Omega_{Z}^{*}(Y)\),
  and the restriction map~\(\Omega_{Z}^{*}(X)\to\Omega_{Z}^{*}(Y)\)
  is a morphism of \(\Wstar\)-module.
  Thus, its kernel~\(\Omega_{Z}^{*}(X,Y)\) is also a \(\Wstar\)-module.

  We have \(\Omega^{*}(U)_{\rm bas}=\Omega^{*}(U/G)\),
  \cf~\cite[Sec.~2.3.5]{GuilleminSternberg:1999},
  hence \(\Omega_{Z}^{*}(X)_{\rm bas}=\Omega_{Z/G}^{*}(X/G)\).
  The same holds again for~\(Y\), and by naturality we find
  \begin{equation}
    \Omega_{Z}^{*}(X,Y)_{\rm bas}=\Omega_{Z/G}^{*}(X/G,Y/G).
  \end{equation}
  This together with Lemma~\ref{thm:retract-CarGA-Abas} gives
  \begin{equation}
    \HG^{*}(X,Y)=H^{*}(X/G,Y/G).
  \end{equation}

  For the general case we
  observe that \(X\) is metrizable, hence normal.
  We can therefore write
  \begin{align}
    \Omega_{Z}^{*}(X,Y) &= \dirlim \Omega_{Z}^{*}(X,B) \\
    \shortintertext{where the direct limit is taken over all \(G\)-stable closed neighbourhoods~\(B\) of~\(Y\) in~\(X\),}
    \label{eq:dirlim-XYBY}
    &= \dirlim \Omega_{Z\setminus Y}^{*}(X\setminus Y,B\setminus Y)
  \end{align}
  because
  any germ of a differential form on~\(X\setminus Y\) that vanishes on~\(B\setminus Y\)
  can be extended to a germ on~\(X\) vanishing on~\(Y\):
  Assume that the differential form~\(\alpha\) is defined
  on the neighbourhood~\(U\) of~\(X\setminus Y\) in~\(Z\setminus Y\)
  and vanishes on the neighbourhood~\(V\subset U\) of~\(B\setminus Y\).
  Choose disjoint neighbourhoods \(U'\subset U\) of~\(X\setminus V\) and \(V'\) of~\(Y\) in~\(Z\),
  and let \(\beta\) be the form on~\(U'\cup V\cup V'\) that agrees with~\(\alpha\) on~\(U'\cup V\)
  and vanishes on~\(V'\). Then \(\alpha=\beta\) as germs on~\(X\setminus Y\).

  Because \(G\) acts locally freely on~\(X\setminus Y\), it does so on some
  \(G\)-stable open neighbourhood~\(W\subset Z\setminus Y\), and we can write
  \begin{equation}
    \Omega_{Z}^{*}(X,Y) = \dirlim \Omega_{W}^{*}(X\setminus Y,B\setminus Y).
  \end{equation}
  By what we have said above, this is a direct limit of \(\Wstar\)-modules,
  hence a \(\Wstar\)-module itself, and
  \begin{equation}
    \Omega_{Z}^{*}(X,Y)_{\rm bas} = \dirlim \Omega_{W/G}^{*}(X/G\setminus Y/G,B'\setminus Y/G),
  \end{equation}
  where \(B'\) runs through the closed neighbourhoods of~\(Y/G\) in~\(X/G\).
  The right-hand side is our definition of~\(\Omega_{Z/G}^{*}(X/G,Y/G)\)
  (which is independent of the choice of~\(W\)).
  
  Moreover,
  using Lemma~\ref{thm:retract-CarGA-Abas} and arguments similar to the ones above,
  we find
  \begin{align}
    \HG^{*}(\Omega_{Z}^{*}(X,Y)) &= H^{*}(\Omega_{Z}^{*}(X,Y)_{\rm bas})
    = \dirlim H^{*}(X/G\setminus Y/G,B'\setminus Y/G) \\
    &= \dirlim H^{*}(X/G,B') = H^{*}(X/G,Y/G). \qedhere
  \end{align}
\end{proof}

\subsubsection{Compact supports}

We write \(\Hc^{*}(X,Y)\) for the Alexander--Spanier cohomology of the pair~\((X,Y)\)
with compact supports.
There is a natural isomorphism
\begin{equation}
  \label{eq:Hc-tautness}
  \Hc^{*}(X,Y) = \dirlim H^{*}(X,B)
\end{equation}
where the direct limit ranges over all neighbourhoods~\(B\) of~\(Y\)
such that \(X\setminus B\) has compact closure in~\(X\), see~\cite[Thm.~6.6.16]{Spanier:1966}.
Since \(X\) is metrizable, it is enough to consider closed neighbourhoods of~\(B\). 
We therefore set
\begin{equation}
  \Omega_{Z,c}^{*}(X,Y) = \dirlim \Omega_{Z}^{*}(X,B)
\end{equation}
where \(B\) ranges over all closed neighbourhoods of~\(Y\) such that \(X\setminus B\) has compact closure.
Then everything said about closed supports carries over to compact supports.
In particular:

\begin{lemma}
  \label{thm:iso-as-dr-c}
  \(H^{*}(\Omega_{Z,c}^{*}(X,Y))=\Hc^{*}(X,Y)\), the Alexander--Spanier cohomology
  of the pair~\((X,Y)\) with compact supports.
\end{lemma}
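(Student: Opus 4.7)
The plan is to obtain the compactly supported statement by passing to the direct limit in the closed-support version established in Lemma~\ref{thm:iso-as-dr}.

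First I would unravel the definitions. By construction,
\begin{equation*}
  \Omega_{Z,c}^{*}(X,Y) = \dirlim_{B} \Omega_{Z}^{*}(X,B),
\end{equation*}
where \(B\) ranges over the closed neighbourhoods of~\(Y\) in~\(X\) with \(X\setminus B\) relatively compact, ordered by reverse inclusion. This is a directed system of cochain complexes of \(\R\)-vector spaces, and since direct limits over directed sets are exact in the category of vector spaces, they commute with cohomology:
\begin{equation*}
  H^{*}(\Omega_{Z,c}^{*}(X,Y)) = \dirlim_{B} H^{*}(\Omega_{Z}^{*}(X,B)).
\end{equation*}

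Next I would apply Lemma~\ref{thm:iso-as-dr} to each term. For every admissible~\(B\), that lemma gives a natural isomorphism \(H^{*}(\Omega_{Z}^{*}(X,B))\cong H^{*}(X,B)\). Naturality with respect to the inclusions of pairs \((X,B')\hookrightarrow(X,B)\) for \(B\subset B'\) is exactly what is needed for these isomorphisms to assemble into an isomorphism of directed systems, hence
\begin{equation*}
  \dirlim_{B} H^{*}(\Omega_{Z}^{*}(X,B)) \cong \dirlim_{B} H^{*}(X,B).
\end{equation*}
Finally, the tautness identity~\eqref{eq:Hc-tautness}, together with the observation (already recorded in the excerpt) that closed neighbourhoods of~\(Y\) with relatively compact complement are cofinal among all such neighbourhoods because \(X\) is metrizable, identifies the right-hand side with \(\Hc^{*}(X,Y)\).

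The only step that requires genuine care is the verification that the direct system used to define \(\Omega_{Z,c}^{*}(X,Y)\) and the one appearing in~\eqref{eq:Hc-tautness} are indexed by the same cofinal system and that Lemma~\ref{thm:iso-as-dr} is natural with respect to the relevant inclusions \((X,B')\subset(X,B)\); both points are routine because the quasi-isomorphism \(\Omega_{Z}^{*}(X,B)\to\dirlim \Ss^{*}(U,V)\) constructed in the proof of Lemma~\ref{thm:iso-as-dr} is induced by the de~Rham map on neighbourhoods and hence functorial in the pair. No convergence or interchange-of-limits issue arises since everything happens at the level of exact directed colimits of complexes of vector spaces.
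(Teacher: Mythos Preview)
Your proposal is correct and is precisely the argument the paper has in mind: the paper does not give a separate proof but simply remarks that ``everything said about closed supports carries over to compact supports,'' which unwinds to exactly the direct-limit-over-\(B\) argument you wrote, using Lemma~\ref{thm:iso-as-dr}, exactness of direct limits, and~\eqref{eq:Hc-tautness}.
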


\begin{lemma}
  \label{thm:iso-as-dr-equiv-c}
  \(\HG^{*}(\Omega_{Z,c}^{*}(X,Y))=\HGc^{*}(X,Y)\), the \(G\)-equivariant Alexander--Spa\-nier cohomology
  of the pair~\((X,Y)\) with compact supports.
\end{lemma}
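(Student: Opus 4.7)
The plan is to mimic, in the equivariant setting, the way Lemma~\ref{thm:iso-as-dr-c} is obtained from Lemma~\ref{thm:iso-as-dr}: express both sides as direct limits of their closed-support counterparts and invoke Lemma~\ref{thm:iso-as-dr-equiv} termwise.

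First I would observe that the Cartan functor \(A\mapsto(\R[\ggg^{*}]\otimes A)^{G}\) commutes with direct limits of \(\Gstar\)-modules (the tensor product commutes with direct limits, and so does the \(G\)-invariants functor since the transition maps are \(G\)-equivariant, so invariant cycles can be pushed through; alternatively, averaging via Haar measure is continuous under direct limits). Cohomology also commutes with direct limits of complexes. Since
\[
  \Omega_{Z,c}^{*}(X,Y) = \dirlim\nolimits_{B} \Omega_{Z}^{*}(X,B)
\]
by definition, where \(B\) ranges over \(G\)-stable closed neighbourhoods of~\(Y\) in~\(X\) with \(X\setminus B\) having compact closure (these are cofinal among all such neighbourhoods because any \(B\) can be replaced by its \(G\)-orbit, which is closed by compactness of~\(G\) and has a smaller complement), I would therefore conclude
\[
  \HG^{*}(\Omega_{Z,c}^{*}(X,Y)) = \dirlim\nolimits_{B} \HG^{*}(\Omega_{Z}^{*}(X,B)) = \dirlim\nolimits_{B} H_{G}^{*}(X,B),
\]
the last equality being Lemma~\ref{thm:iso-as-dr-equiv}.

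Second I would establish the equivariant tautness formula
\[
  \HGc^{*}(X,Y) = \dirlim\nolimits_{B} H_{G}^{*}(X,B).
\]
As in the proof of Lemma~\ref{thm:iso-as-dr-equiv}, I approximate \(EG\) by a family of compact free \(G\)-manifolds~\(EG_{k}\), so that for \(k\) large relative to the degree~\(n\),
\[
  \HGc^{n}(X,Y) = \Hc^{n}(X\times_{G}EG_{k},\,Y\times_{G}EG_{k}).
\]
Applying the ordinary compact-support tautness~\eqref{eq:Hc-tautness} inside the orbifold~\(Z\times_{G}EG_{k}\) and using that neighbourhoods of the form \(B\times_{G}EG_{k}\) are cofinal (because \(EG_{k}\) is compact, the projection \(X\times_{G}EG_{k}\to X/G\) is proper, so preimages of closed neighbourhoods of \(Y/G\) with compact complement give such a cofinal system) yields
\[
  \Hc^{n}(X\times_{G}EG_{k},\,Y\times_{G}EG_{k}) = \dirlim\nolimits_{B} H^{n}(X\times_{G}EG_{k},\,B\times_{G}EG_{k}) = \dirlim\nolimits_{B} H_{G}^{n}(X,B),
\]
matching the expression obtained in the first step.

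The main obstacle I expect is the cofinality check for \(G\)-stable neighbourhoods with compact-closure complement in the Borel construction; this is a routine compactness argument but does need to be carried out carefully, since without it the two direct limits would be over different indexing systems and could not be identified. Once this is settled, everything else is a formal consequence of Lemma~\ref{thm:iso-as-dr-equiv} together with the exactness of direct limits.
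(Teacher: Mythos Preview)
Your approach is exactly what the paper has in mind: it gives no separate proof for this lemma, merely remarking that ``everything said about closed supports carries over to compact supports'' once one writes \(\Omega_{Z,c}^{*}(X,Y)\) and \(\Hc^{*}\) as direct limits over the neighbourhoods~\(B\). Your plan of applying Lemma~\ref{thm:iso-as-dr-equiv} termwise and using the approximations~\(EG_{k}\) is the intended argument.

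One slip: your cofinality justification for \(G\)-stable \(B\)'s points the wrong way. The direct limit is taken as \(B\) \emph{shrinks} toward~\(Y\) (the transition map \(\Omega_{Z}^{*}(X,B)\hookrightarrow\Omega_{Z}^{*}(X,B')\) exists for~\(B'\subset B\)), so replacing \(B\) by the \emph{larger} set~\(G\cdot B\) does not help. The correct move is on the complement: given~\(B\), the set \(C=G\cdot\overline{X\setminus B}\) is compact, \(G\)-stable, and disjoint from~\(Y\) (since \(Y\) is \(G\)-stable and \(\overline{X\setminus B}\subset X\setminus B^{\circ}\) misses~\(Y\)); hence \(B'=X\setminus C\) is a \(G\)-stable neighbourhood of~\(Y\) contained in~\(B\) with \(X\setminus B'=C\) compact. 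This is indeed the routine compactness argument you anticipated, but not the one you wrote.
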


\begin{lemma}
  \label{thm:cond-C-compact}
  If \(G\) acts locally freely on~\(X\setminus Y\), then
  \(\Omega_{Z,c}^{*}(X,Y)\) is a \(\Wstar\)-module,
  and
  \begin{equation*}
    \Omega_{Z,c}^{*}(X,Y)_{\rm bas} = \Omega_{Z/G,c}^{*}(X/G,Y/G).
  \end{equation*}
  In particular, \(\HGc^{*}(X,Y)=\Hc^{*}(X/G,Y/G)\).
\end{lemma}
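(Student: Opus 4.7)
The plan is to parallel the proof of Lemma~\ref{thm:cond-C} for closed supports, replacing the direct-limit description of germs along~\(Y\) by the one giving compact supports. Concretely, from the definition
\begin{equation*}
  \Omega_{Z,c}^{*}(X,Y) = \dirlim_{B}\,\Omega_{Z}^{*}(X,B),
\end{equation*}
where \(B\) ranges over the closed neighbourhoods of~\(Y\) in~\(X\) with \(X\setminus B\) relatively compact, I would work term-by-term in this direct system. Since \(B\supset Y\), we have \(X\setminus B\subset X\setminus Y\), so \(G\) acts locally freely on \(X\setminus B\); Lemma~\ref{thm:cond-C} applies and gives, naturally in~\(B\), a \(\Wstar\)-module structure on~\(\Omega_{Z}^{*}(X,B)\) together with an identification
\begin{equation*}
  \Omega_{Z}^{*}(X,B)_{\rm bas}=\Omega_{Z/G}^{*}(X/G,B/G).
\end{equation*}

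The \(\Wstar\)-structure and its defining operators \(L_\xi\), \(\iota_\xi\) are compatible with the restriction maps in the direct system, so the direct limit~\(\Omega_{Z,c}^{*}(X,Y)\) inherits a \(\Wstar\)-module structure. Passing to basic subcomplexes commutes with filtered direct limits (taking \(G\)-invariants and horizontal elements are exact functors on \(G\)-modules over a field of characteristic zero), hence
\begin{equation*}
  \Omega_{Z,c}^{*}(X,Y)_{\rm bas}
  =\dirlim_{B}\,\Omega_{Z/G}^{*}(X/G,B/G).
\end{equation*}
The sets \(B/G\) run through the closed neighbourhoods of~\(Y/G\) in~\(X/G\) with \((X\setminus B)/G\) relatively compact, which is exactly the indexing set for~\(\Omega_{Z/G,c}^{*}(X/G,Y/G)\); this identifies the right-hand side with the latter complex. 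As in the proof of Lemma~\ref{thm:cond-C}, one interprets \(\Omega_{Z/G,c}^{*}(X/G,Y/G)\) via a \(G\)-stable open neighbourhood \(W\subset Z\setminus Y\) of \(X\setminus Y\) on which \(G\) acts locally freely, and checks independence of the choice of~\(W\).

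The final assertion on cohomology is then a three-step chain of isomorphisms: Lemma~\ref{thm:iso-as-dr-equiv-c} identifies \(\HGc^{*}(X,Y)\) with \(\HG^{*}(\Omega_{Z,c}^{*}(X,Y))\); Lemma~\ref{thm:retract-CarGA-Abas} replaces this by \(H^{*}(\Omega_{Z,c}^{*}(X,Y)_{\rm bas})\); the identification just established rewrites it as \(H^{*}(\Omega_{Z/G,c}^{*}(X/G,Y/G))\); and the compact-support analogue Lemma~\ref{thm:iso-as-dr-c}, applied in the quotient orbifold (or in~\(W/G\) in the general case), yields \(\Hc^{*}(X/G,Y/G)\).

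The main obstacle is the bookkeeping with the two directed systems (one indexing compact-support germs, the other indexing the definition of \(\Omega_{Z/G,c}^{*}\) downstairs) and making sure that the quotient map \(X\setminus Y\to (X\setminus Y)/G\) sends the cofinal family of \(B\)'s upstairs to a cofinal family of \(B/G\)'s downstairs — this uses properness of the \(G\)-action on \(X\setminus Y\) modulo the locally-free hypothesis so that compactness of \(X\setminus B\) really does descend to compactness of \((X\setminus B)/G\). Everything else is a routine transfer of the closed-support argument.
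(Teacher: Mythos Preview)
Your proposal is correct and matches the paper's approach: the paper does not give a separate proof of this lemma but simply says that ``everything said about closed supports carries over to compact supports,'' and what you have written is precisely the spelled-out version of that carry-over, using the direct-limit definition of \(\Omega_{Z,c}^{*}(X,Y)\) and applying Lemma~\ref{thm:cond-C} termwise to each pair~\((X,B)\). The cofinality and compactness-descent issues you flag are indeed the only points requiring care, and your treatment of them is adequate; note that compactness of \(\overline{X\setminus B}\) descends to \(\overline{(X/G)\setminus(B/G)}\) simply because the quotient map by a compact group is continuous and closed, so the local-freeness hypothesis is not needed for that particular step.
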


Restriction of forms and extension by~\(0\) give an isomorphism of \(\Gstar\)-modules
\begin{equation}
  \label{eq:OmegacXY-quasi-iso}
  \Omega_{Z,c}^{*}(X,Y)=\Omega_{Z\setminus Y,c}^{*}(X\setminus Y),
\end{equation}
which confirms that cohomology with compact supports is a `single space theory'.
From it we recover the natural isomorphism (see~\cite[Thm.~6.6.11]{Spanier:1966})
\begin{equation}
  \label{eq:Hc-rel-compl}
  \Hc^{*}(X,Y)=\Hc^{*}(X\setminus Y)
\end{equation}
and, using Lemma~\ref{thm:Car-serre},
its equivariant analogue,
\begin{equation}
  \label{eq:HGc-rel-compl}
  \HGc^{*}(X,Y)=\HGc^{*}(X\setminus Y).
\end{equation}

The special case~\(X=Z\) of~\eqref{eq:OmegacXY-quasi-iso}
together with the exact top row of diagram~\eqref{eq:diag-Omega-S}
gives a natural isomorphism
\begin{equation}
  \Omega_{Z,c}^{*}(Y) = \Omega_{c}^{*}(Z) \bigm/ \Omega_{Z,c}^{*}(Z,Y)
  = \Omega_{c}^{*}(Z) \bigm/ \Omega_{c}^{*}(Z\setminus Y),
\end{equation}
which could be used as the definition of~\(\Omega_{Z,c}^{*}(Y)\);
again via~\eqref{eq:OmegacXY-quasi-iso} it can be extended to pairs.
This is the approach taken for example by Guillemin--Sternberg \cite[Sec.~11.1]{GuilleminSternberg:1999}
and also by De\,Concini--Procesi--Vergne~\cite[Sec.~1, App.~A]{DeConciniProcesiVergne:2013},
who prove Lemmas~\ref{thm:iso-as-dr-c} and~\ref{thm:iso-as-dr-equiv-c}
under the assumption that \(X\setminus Y\) is locally contractible
\cite[Prop.~A.4~\&~A.8]{DeConciniProcesiVergne:2013}.

\subsection{Equivariant homology}

We want to define equivariant homology by appropriately dualizing the Cartan model
of differential forms.
The most conceptual approach would be to use currents, as done
by Metzler~\cite[p.~169]{GuilleminSternberg:1999}
and Meinrenken~\cite[Sec.~4]{Meinrenken:2005}.
Because we are ultimately only interested in spaces with
finite-dimensional cohomology, we will work with the algebraic
instead of the topological dual of the de~Rham complex.

For ease of notation, we only write out results for cohomology with closed supports
and homology with compact supports in this section. 
All results remain valid for (co)homology with the other pair of supports.

Let \(Y\subset X\) be closed \(G\)-stable subsets of an orbifold~\(Z\). We say in this case
that \((X,Y)\) is a \emph{\(G\)-pair} in~\(Z\). Often we do not mention the ambient orbifold~\(Z\) explicitly;
we say that \(X\) and~\(Y\) are \emph{\(G\)-spaces}, and we write \(\Omega^{*}(X,Y)\) instead of~\(\Omega_{Z}^{*}(X,Y)\).

\begin{assumption}
  \label{ass:finite-hom}
  From now on,
  whenever we consider the homology or equivariant homology
  (with compact supports) of a \(G\)-pair~\((X,Y)\), we assume that \(H_{*}(X,Y)\)
  or, equivalently, \(H^{*}(X,Y)\) is finite-dimensional. In this case
  there is a natural isomorphism
  \begin{equation}
    H_{*}(X,Y) = H^{*}(X,Y)^{\vee}.
  \end{equation}
  We make no such assumption for \(\hHc_{*}(X,Y)\), the homology of~\((X,Y)\) with closed supports,
  and in fact
  \begin{equation}
    \hHc_{*}(X,Y) = \Hc^{*}(X,Y)^{\vee}
  \end{equation}
  is always true.
\end{assumption}

We define the \emph{equivariant homology}~\(\hHG_{*}(X,Y)\)
of the \(G\)-pair~\((X,Y)\)
as the cohomology of the Cartan model 
\begin{equation}
  \hCG_{*}(X,Y) = \CarG^{*}(\Omega^{*}(X,Y)^{\vee}),
\end{equation}
where \(\Omega^{*}(X,Y)^{\vee}\) denotes the dual
of the \(\Gstar\)-module~\(\Omega^{*}(X,Y)\)
as defined in Section~\ref{sec:cartan-models}.\footnote{%
  Using Propositions~\ref{thm:hHGc-serre} and~\ref{thm:iso-hHGX-hHXG},
  one can show that \(\hHG_{*}(X)\) is isomorphic to
  the equivariant Borel--Moore homology~\(H^{G}_{\textrm{BM},*}(X)\)
  as introduced by Edidin and Graham~\cite[Sec.~2.8]{EdidinGraham:1998}.}

\begin{remark}
  Assume that \(G=T\) is a torus.
  As remarked after~\eqref{eq:map-Car-Hom},
  the canonical map
  \begin{equation}
    \label{eq:map-Car-Hom-T}
    \hCT_{*}(X,Y) \to \Hom_{\RT}(\CT^{*}(X,Y),\RT)
  \end{equation}
  is an isomorphism of dg~\(\RT\)-modules in this case.
  We thus recover the definition of torus-equivariant homology
  given in~\cite{AlldayFranzPuppe:orbits1} and~\cite{AlldayFranzPuppe:orbits4},
  up to the substitution of the Cartan model for the ``singular Cartan model''
  used there.
  For general~\(G\), the analogue of~\eqref{eq:map-Car-Hom-T} still is a quasi-isomorphism
  by Lemma~\ref{thm:quiso-Car-Hom}.
\end{remark}

The cohomology of~\(\Omega^{*}(X,Y)^{\vee}\) is \(H^{*}(X,Y)^{\vee}=H_{*}(X,Y)\)
by Assumption~\ref{ass:finite-hom}.
As a special case of Lemma~\ref{thm:Car-serre} we therefore get:

\begin{proposition}
  \label{thm:hHGc-serre}
  There is a first-quadrant spectral sequence, natural in~\((X,Y)\), with
  \begin{equation*}
    E_{1} = \RG\otimes H_{*}(X,Y) \;\Rightarrow\; \hHG_{*}(X,Y).
  \end{equation*}
\end{proposition}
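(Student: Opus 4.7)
The proof is essentially a direct specialization of Lemma~\ref{thm:Car-serre} to the \(\Gstar\)-module \(A = \Omega^{*}(X,Y)^{\vee}\), whose equivariant cohomology is by definition \(\hHG_{*}(X,Y) = H^{*}(\CarG^{*}(A))\). So the plan is simply to check the hypothesis of Lemma~\ref{thm:Car-serre} and then identify the two sides of the resulting spectral sequence.

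First, I would verify boundedness. Since the ambient orbifold~\(Z\) is finite-dimensional, the de~Rham complex \(\Omega^{*}(X,Y)\) lives in cohomological degrees~\([0,\dim Z]\), and therefore \(A = \Omega^{*}(X,Y)^{\vee}\) is concentrated in degrees~\([-\dim Z, 0]\). In particular \(A\) is bounded, hence a fortiori bounded below, so Lemma~\ref{thm:Car-serre} applies and produces a natural spectral sequence with \(E_{1} = \RG \otimes H^{*}(A)\) abutting to \(\HG^{*}(A)\).

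It then remains to rewrite both sides. The abutment is \(\HG^{*}(A) = \hHG_{*}(X,Y)\) by the definition given just before the proposition. For the \(E_{1}\)-page, the finite-dimensionality hypothesis built into Assumption~\ref{ass:finite-hom} ensures that dualization commutes with taking cohomology at the level of the de~Rham complex, giving
\begin{equation*}
    H^{*}(A) = H^{*}\bigl(\Omega^{*}(X,Y)^{\vee}\bigr) = H^{*}(X,Y)^{\vee} = H_{*}(X,Y),
\end{equation*}
where the last identification is again from Assumption~\ref{ass:finite-hom}. Naturality in the pair~\((X,Y)\) is inherited from the naturality in~\(A\) of the spectral sequence of Lemma~\ref{thm:Car-serre}, using that \(\Omega^{*}(X,Y)^{\vee}\) depends functorially on the pair.

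There is no real obstacle here; the only bookkeeping point worth mentioning is that, because \(A\) sits in non-positive degrees while the polynomial filtration on~\(\CarG^{*}(A)\) is non-negative, the spectral sequence lives in a half-strip rather than literally in the first quadrant, but the convergence behaviour is the standard one for a filtration bounded below on a complex bounded in the opposite direction, which is all that is used in the sequel.
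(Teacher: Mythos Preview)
Your proposal is correct and follows exactly the paper's own argument: the paper simply observes that \(H^{*}(\Omega^{*}(X,Y)^{\vee}) = H^{*}(X,Y)^{\vee} = H_{*}(X,Y)\) by Assumption~\ref{ass:finite-hom} and then states the proposition as a special case of Lemma~\ref{thm:Car-serre}. Your added verification of boundedness and your remark about the ``first-quadrant'' terminology are reasonable clarifications, but the core reasoning is identical.
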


\begin{proposition}[Universal coefficient theorem]
  \label{thm:UCT}
  There is a spectral sequence, natural in~\((X,Y)\), with
  \begin{align*}
    E_{2} &= \Hom_{\RG}(\HG^{*}(X,Y),\RG) \;\Rightarrow\; \hHG_{*}(X,Y). \\
  \shortintertext{If \(H_{*}(X,Y)\) is finite-dimensional, then
    there is another natural spectral sequence with}
    E_{2} &= \Hom_{\RG}(\hHG_{*}(X,Y),\RG) \;\Rightarrow\; \HG^{*}(X,Y).
  \end{align*}
\end{proposition}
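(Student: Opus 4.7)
The plan is to derive both spectral sequences from the algebraic universal coefficient theorem (Proposition~\ref{thm:UCT-alg}) applied to suitably chosen $\Gstar$-modules, then to translate the output using Lemma~\ref{thm:iso-as-dr-equiv} and the definition of equivariant homology.

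For the first spectral sequence, I would apply Proposition~\ref{thm:UCT-alg} directly to the bounded $\Gstar$-module $A = \Omega^{*}(X,Y)$; boundedness holds because $\Omega^{k}$ vanishes outside $[0,\dim Z]$. This yields a natural spectral sequence abutting to $\HG^{*}(A^{\vee})$ with the claimed $E_{2}$ page involving $\HG^{*}(A)$. Identifying $\HG^{*}(A)$ with $\HG^{*}(X,Y)$ via Lemma~\ref{thm:iso-as-dr-equiv} and $\HG^{*}(A^{\vee})$ with $\hHG_{*}(X,Y)$ by definition delivers the first statement. For the second spectral sequence, I would apply Proposition~\ref{thm:UCT-alg} to the bounded $\Gstar$-module $A^{\vee}$, producing a natural spectral sequence abutting to $\HG^{*}(A^{\vee\vee})$ with $E_{2}$ page involving $\HG^{*}(A^{\vee}) = \hHG_{*}(X,Y)$.

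The main obstacle is then to identify $\HG^{*}(A^{\vee\vee})$ with $\HG^{*}(A) = \HG^{*}(X,Y)$ under the finite-dimensionality assumption on $H_{*}(X,Y)$. The canonical evaluation $A \to A^{\vee\vee}$ is a morphism of $\Gstar$-modules inducing on ordinary cohomology the map $H^{*}(A) \to H^{*}(A)^{\vee\vee} = H^{*}(A^{\vee\vee})$, which is bijective precisely because $H^{*}(X,Y)$ is finite-dimensional (dualization being exact over $\R$). Applying Lemma~\ref{thm:Car-serre} to both $A$ and $A^{\vee\vee}$ and comparing the resulting spectral sequences on the $E_{1}$ page, where the induced map becomes $\RG\otimes H^{*}(A) \to \RG\otimes H^{*}(A^{\vee\vee})$ and is therefore an isomorphism, promotes this to an isomorphism $\HG^{*}(A) \cong \HG^{*}(A^{\vee\vee})$. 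Naturality of both spectral sequences in $(X,Y)$ follows from the naturality in Proposition~\ref{thm:UCT-alg} and the functoriality of $\Omega^{*}(X,Y)$ and its dual.
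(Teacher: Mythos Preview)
Your proposal is correct and follows essentially the same route as the paper's proof: apply Proposition~\ref{thm:UCT-alg} to~\(A=\Omega^{*}(X,Y)\) for the first spectral sequence and to~\(A^{\vee}\) for the second, then identify \(\HG^{*}(A^{\vee\vee})\) with~\(\HG^{*}(X,Y)\) via Lemma~\ref{thm:Car-serre} using that the canonical map~\(A\to A^{\vee\vee}\) is a quasi-isomorphism when \(H^{*}(X,Y)\) is finite-dimensional. The paper's own argument is just a terser version of yours, invoking Assumption~\ref{ass:finite-hom} in place of spelling out the double-dual comparison.
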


\begin{proof}
  The first spectral sequence is a special case of the algebraic
  universal coefficient theorem (Proposition~\ref{thm:UCT-alg}).
  The same result also gives a spectral sequence of the second form
  converging to~\(\HG^{*}(\Omega^{*}(X,Y)^{\vee\vee})\).
  The latter is equal to~\(\HG^{*}(X,Y)\) by Lemma~\ref{thm:Car-serre}
  because the canonical map~\(\Omega^{*}(X,Y)\to\Omega^{*}(X,Y)^{\vee\vee}\)
  is a quasi-isomorphism by assumption.
\end{proof}

\begin{proposition}
  \label{thm:iso-hHGX-hHXG}
  If \(G\) acts locally freely on~\(X\setminus Y\),
  then there is an isomorphism of \(\RG\)-modules
  \begin{equation*}
    \hHG_{*}(X,Y) \cong H_{*+\dim G}(X/G,Y/G),
  \end{equation*}
  where the \(\RG\)-module structure of~\(H_{*}(X/G,Y/G)\) is
  dual to the one in cohomology.
\end{proposition}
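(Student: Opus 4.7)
The plan is a direct chain of identifications using Lemmas~\ref{thm:retract-CarGA-Abas}, \ref{thm:Adual-Wstar}, and~\ref{thm:cond-C} in succession. Since \(G\) acts locally freely on~\(X\setminus Y\), Lemma~\ref{thm:cond-C} says that \(\Omega^{*}(X,Y)=\Omega_{Z}^{*}(X,Y)\) is a \(\Wstar\)-module, and the construction recalled just before Lemma~\ref{thm:Adual-Wstar} promotes the dual \(\Omega^{*}(X,Y)^{\vee}\) to a \(\Wstar\)-module as well. This is the \(\Gstar\)-module that enters the definition \(\hCG_{*}(X,Y)=\CarG^{*}(\Omega^{*}(X,Y)^{\vee})\).

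Applying Lemma~\ref{thm:retract-CarGA-Abas} to \(\Omega^{*}(X,Y)^{\vee}\) yields a natural isomorphism of \(\RG\)-modules
\begin{equation*}
  \hHG_{*}(X,Y) = H^{*}\bigl((\Omega^{*}(X,Y)^{\vee})_{\rm bas}\bigr).
\end{equation*}
Next, Lemma~\ref{thm:Adual-Wstar} identifies \((\Omega^{*}(X,Y)^{\vee})_{\rm bas}\) with \((\Omega^{*}(X,Y)_{\rm bas})^{\vee}[-\dim G]\) as dg~\(\RG\)-modules, and Lemma~\ref{thm:cond-C} identifies \(\Omega^{*}(X,Y)_{\rm bas}\) with \(\Omega_{Z/G}^{*}(X/G,Y/G)\). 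Stringing these together gives
\begin{equation*}
  \hHG_{*}(X,Y) = H^{*}\bigl(\Omega_{Z/G}^{*}(X/G,Y/G)^{\vee}\bigr)[-\dim G].
\end{equation*}

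To finish, I would note that \(H^{*}(X/G,Y/G)=\HG^{*}(X,Y)\) (by the ``In particular'' part of Lemma~\ref{thm:cond-C}) is finite-dimensional: the Serre spectral sequence for the locally free fibration \(X\to X/G\), whose typical fiber is rationally equivalent to~\(G\), shows that \(H^{*}(X/G,Y/G)\) is finite-dimensional under Assumption~\ref{ass:finite-hom}. Consequently dualization commutes with cohomology, and Lemma~\ref{thm:iso-as-dr} gives
\begin{equation*}
  H^{*}\bigl(\Omega_{Z/G}^{*}(X/G,Y/G)^{\vee}\bigr) = H^{*}(X/G,Y/G)^{\vee} = H_{*}(X/G,Y/G),
\end{equation*}
and unwinding the degree shift \([-\dim G]\) under the convention that \(H_{n}\) lives in cohomological degree~\(-n\) produces exactly \(H_{*+\dim G}(X/G,Y/G)\).

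The main obstacle I anticipate is the bookkeeping of the \(\RG\)-module structures through the chain of identifications, together with the degree shift. The \(\RG\)-action on \((A_{\rm bas})^{\vee}\) coming out of Lemma~\ref{thm:Adual-Wstar} is, by its very construction, the one dual to the action on~\(A_{\rm bas}\); after passing to cohomology this yields precisely the action on \(H_{*}(X/G,Y/G)\) dual to the \(\RG\)-action on \(H^{*}(X/G,Y/G)\) coming from the classifying map~\(X/G\to BG\). Checking that these identifications commute with the degree shifts on both the chain and cohomology level is the only place where care is needed; once this is done, the proposition follows.
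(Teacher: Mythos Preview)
Your argument is correct and matches the paper's proof exactly: the paper simply strings together Lemmas~\ref{thm:cond-C}, \ref{thm:Adual-Wstar}, and~\ref{thm:retract-CarGA-Abas} and leaves the final passage to~\(H_{*+\dim G}(X/G,Y/G)\) implicit. Your added paragraph on the finite-dimensionality of~\(H^{*}(X/G,Y/G)\) is not actually needed---over a field one has \(H^{*}(C^{\vee})=H^{*}(C)^{\vee}\) for any complex~\(C\)---and the Serre spectral sequence for~\(X\to X/G\) would not establish it anyway (a finite-dimensional~\(E_{\infty}\) does not bound~\(E_{2}\)); if you want a valid justification, combine Remark~\ref{rem:HG-fg} with the fact that \(\Omega^{*}(X,Y)_{\rm bas}\) is bounded in degree.
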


\begin{proof}
  By Lemma~\ref{thm:cond-C} (or Lemma~\ref{thm:cond-C-compact}, depending on supports)
  we have
  \begin{align}
    \Omega^{*}(X,Y)_{\rm bas} &= \Omega^{*}(X/G,Y/G),
  \shortintertext{hence}
    (\Omega^{*}(X,Y)^{\vee})_{\rm bas} &\cong \Omega^{*}(X/G,Y/G)^{\vee}[-\dim G]
  \end{align}
  by Lemma~\ref{thm:Adual-Wstar}. We conclude with Lemma~\ref{thm:retract-CarGA-Abas}.
\end{proof}

\subsection{Equivariant Poincaré duality}
\label{sec:PD-equiv}

In this section the supports of the (co)ho\-mol\-ogy groups do matter,
and we carefully distinguish between them.

Let \(Z\) be an orbifold of dimension~\(n\).
We say that \(Z\) is \emph{locally orientable} if it is locally the quotient
of some~\(\R^{n}\) by a finite subgroup of~\(SO(n)\). In this case,
it is a rational homology manifold, and orientable if and only if \(\hHc_{n}(Z)\cong\R\).
Any manifold is a locally orientable orbifold.

Now assume that \(Z\) is a locally orientable \(G\)-orbifold,
and let \(\pi\colon\tZ\to Z\) be the orientable two-fold cover of~\(Z\)
with a fixed orientation.
If \(Z\) is orientable, then \(\tZ\) consists of two copies of~\(Z\)
with opposite orientations.
The \(G\)-action on~\(Z\) lifts to~\(\tZ\),
\cf~\cite[Lemma~2.13]{AlldayFranzPuppe:orbits4}.
The non-trivial deck transformation~\(\tau\)
commutes with~\(G\) and reverses the orientation.

For any \(G\)-pair~\((X,Y)\) in~\(Z\),
the \((+1)\)-eigenspace of the induced map~\(\tau^{*}\) on \(\HG^{*}(\pi^{-1}(X),\pi^{-1}(Y))\)
is isomorphic to~\(\HG^{*}(X,Y)\) as an \(\RG\)-module.
We define \(\HG^{*}(X,Y;\kktilde)\),
the equivariant cohomology of~\((X,Y)\) 
with twisted coefficients,
to be the \((-1)\)-eigenspace. 
Equivalently, it is the equivariant cohomology of~\(\Omega_{Z}^{*}(X,Y;\kktilde)\),
the \((-1)\)-eigenspace of~\(\tau^{*}\) on~\(\Omega_{Z}^{*}(\pi^{-1}(X),\pi^{-1}(Y))\).
There are analogous definitions for non-equivariant cohomology, for homology and for other supports.

The integration map
\begin{equation}
  \label{eq:int-map}
  I\colon \Cc^{*}(\tZ) \to \R,
  \quad
  \alpha\mapsto \int_{\tZ}\alpha
\end{equation}
(which gives \(0\) if \(|\alpha|<n\))
defines an orientation~\(o=[I]\in \Hc^{n}(Z;\kktilde)^{\vee}=\hHc_{n}(Z;\kktilde)\).
Moreover, it
is a morphism of \(\Gstar\)-modules of degree~\(-n\),
where \(\R\) is considered as a trivial \(\Gstar\)-module.
Hence \(I_{G}=1\otimes I\in\CarG^{*}(\Cc^{*}(\tZ)^{\vee})\) is a cycle 
and defines an equivariant homology class~\(o_{G}\in\hHGc_{n}(Z;\kktilde)\),
which restricts to~\(o\) under the map~\eqref{eq:Car-restriction}.
In fact, it is the unique lift:

\begin{lemma}
  Any orientation~\(o\in\hHc_{n}(Z)\) lifts uniquely
  to an equivariant orientation~\(o_{G}\in\hHGc_{n}(Z)\).
\end{lemma}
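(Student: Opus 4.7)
The plan is to show that the restriction map \(\hHGc_{n}(Z)\to\hHc_{n}(Z)\) induced by~\eqref{eq:Car-restriction} is an isomorphism; the lemma then follows, since an equivariant lift of any orientation both exists and is unique. (In the non-orientable case, the same argument applies verbatim with twisted coefficients throughout.)

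To establish the isomorphism, I would apply Lemma~\ref{thm:Car-serre} to the \(\Gstar\)-module \(A=\Omega_{Z,c}^{*}(Z)^{\vee}\). Because \(\Omega_{Z,c}^{*}(Z)\) is concentrated in cohomological degrees \([0,n]\), its dual \(A\) is concentrated in \([-n,0]\) and in particular bounded below. The resulting spectral sequence reads
\[
  E_{1}^{p,q}=(\RG)^{p}\otimes H^{q}(A) \;\Longrightarrow\; H_{G}^{p+q}(A),
\]
with \(p\ge 0\) and \(-n\le q\le 0\); in total cohomological degree \(-n\) it computes \(\hHGc_{n}(Z)\).

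The decisive step is to inspect the antidiagonal \(p+q=-n\). The constraints force \((p,q)=(0,-n)\) as the only possibly nonzero bidegree on that antidiagonal, so \(E_{1}^{0,-n}=H^{-n}(A)=\hHc_{n}(Z)\) is the sole contribution. A differential \(d_{r}\colon E_{r}^{p,q}\to E_{r}^{p+r,q-r+1}\) emanating from \((0,-n)\) would land at bidegree with \(q=-n-r+1<-n\), and one arriving at \((0,-n)\) would originate from bidegree with \(p=-r<0\); both are forbidden. Consequently the entry survives to~\(E_{\infty}\), and the edge homomorphism delivers an isomorphism \(\hHGc_{n}(Z)\cong\hHc_{n}(Z)\).

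The only remaining point is to identify this edge homomorphism with the restriction~\eqref{eq:Car-restriction}. This is a routine naturality check: the map \(\CarG^{*}(A)\to A\) defined by evaluating the polynomial variables of \(\R[\ggg^{*}]\) at \(0\) is a chain map inducing~\eqref{eq:Car-restriction}, and with respect to the filtration by \(\RG\)-degree it realises the edge projection onto \(E_{\infty}^{0,*}\). I anticipate no serious obstacle, since boundedness of \(A\) makes convergence automatic and a single bidegree contributes to the total degree in question.
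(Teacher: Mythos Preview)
Your argument is correct and is essentially the one the paper invokes by reference to \cite[Prop.~3.7]{AlldayFranzPuppe:orbits1}: the Serre-type spectral sequence of Lemma~\ref{thm:Car-serre} for~\(A=\Omega_{Z,c}^{*}(Z)^{\vee}\) has a single contribution in total degree~\(-n\), which survives to~\(E_{\infty}\) for degree reasons, so the edge map is an isomorphism.

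One small correction: your inequality \(-n-r+1<-n\) for the outgoing differential from~\((0,-n)\) only holds for~\(r\ge2\). For~\(r=1\) the target is \(E_{1}^{1,-n}=(\RG)^{1}\otimes\hHc_{n}(Z)\), which still vanishes because \(\RG\) is concentrated in even degrees, but you should say so explicitly. Alternatively, you can bypass the spectral sequence and argue directly on the Cartan complex: since \(A\) lives in degrees~\([-n,0]\) and \(\R[\ggg^{*}]\) in non-negative even degrees, \(\CarG^{-n-1}(A)=0\) and \(\CarG^{-n}(A)=(A^{-n})^{G}\); moreover \(\iota_{\xi}\) lowers the \(A\)-degree into~\(A^{-n-1}=0\), so the Cartan differential on~\(\CarG^{-n}(A)\) reduces to~\(1\otimes d_{A}\), giving \(H_{G}^{-n}(A)=H^{-n}(A)^{G}=H^{-n}(A)\) since \(G\) is connected.
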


\begin{proof}
  The proof of~\cite[Prop.~3.7]{AlldayFranzPuppe:orbits1} carries over.
\end{proof}

The integration map~\(I\) also induces the morphism of \(\Gstar\)-modules
\begin{equation}
  \PD\colon \Omega^{*}(\tZ) \to \Cc^{*}(\tZ)^{\vee},
  \quad
  \alpha\mapsto\Bigl(\beta\mapsto I(\alpha\wedge\beta)\Bigr)
\end{equation}
which interchanges the \((\pm1)\)-eigenspaces.
Based on it,
we obtain equivariant Poin\-caré duality
in the same way as Metzler~\cite[Thm.~10.10.1]{GuilleminSternberg:1999}:

\begin{proposition}
  \label{thm:PD-equiv}
  The map~\(\PD\) induces
  an isomorphism~\(\HG^{*}(Z)\to\hHGc_{n-*}(Z;\kktilde)\) of \(\RG\)-mod\-ules
  of degree~\(-n\).
\end{proposition}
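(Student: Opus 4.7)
The plan is to deduce the equivariant statement from classical Poincaré duality on $\tZ$ via the Cartan model, following the approach of Metzler cited in the paper. First I would verify that $\PD\colon\Omega^*(\tZ)\to\Cc^*(\tZ)^{\vee}$ is a morphism of $\Gstar$-modules of degree~$-n$: compatibility with the $G$-action and the Lie derivatives~$L_\xi$ follows from $G$-invariance of integration on the oriented orbifold~$\tZ$ together with the Leibniz rule for the wedge product, while compatibility with the contractions~$\iota_\xi$ reduces, via Cartan's magic formula and the identity $\int_{\tZ}d\gamma=0$ for compactly supported~$\gamma$, to the graded-Leibniz rule for $\iota_\xi$ on the wedge product.

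Next, classical Poincaré duality for the oriented rational homology orbifold~$\tZ$ (invoked through the de~Rham models of Lemmas~\ref{thm:iso-as-dr} and~\ref{thm:iso-as-dr-c}) gives that $\PD$ is already a quasi-isomorphism non-equivariantly. Because $\tau$ reverses orientation we have $I\circ\tau^*=-I$, so $\PD$ interchanges the $(\pm1)$-eigenspaces of~$\tau^*$. Dualization sends the $(-1)$-eigenspace of $\Cc^*(\tZ)$ to the $(-1)$-eigenspace of $\Cc^*(\tZ)^{\vee}$, so restricting $\PD$ to the $(+1)$-eigenspace on the source yields a quasi-isomorphism of $\Gstar$-modules of degree~$-n$,
\[
\PD^{+}\colon\Omega^*(Z)\longrightarrow\Cc^*(Z;\kktilde)^{\vee}.
\]

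Applying the Cartan functor then produces a morphism of dg~$\RG$-modules $\CarG^*(\PD^{+})\colon\CarG^*(\Omega^*(Z))\to\CarG^*(\Cc^*(Z;\kktilde)^{\vee})$ of degree~$-n$. Both sides carry the natural first-quadrant spectral sequence of Lemma~\ref{thm:Car-serre} with $E_{1}=\RG\otimes H^{*}$ of the underlying $\Gstar$-module, and naturality combined with the non-equivariant quasi-isomorphism from the previous step identifies their $E_{1}$~pages. A standard spectral sequence comparison argument then shows $\CarG^*(\PD^{+})$ is a quasi-isomorphism, which on cohomology is the desired isomorphism $\HG^*(Z)\to\hHGc_{n-*}(Z;\kktilde)$ of $\RG$-modules of degree~$-n$.

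The main obstacle is the first step: showing that $\PD$ genuinely respects the full $\Gstar$-structure---in particular compatibility with the contractions~$\iota_\xi$---requires a careful Stokes-type calculation on~$\tZ$. Once this $\Gstar$-functoriality is in place, the remainder is a formal Cartan-model and spectral-sequence manipulation.
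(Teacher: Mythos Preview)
Your proposal is correct and follows essentially the same route as the paper: after noting that \(\PD\) is a morphism of \(\Gstar\)-modules interchanging the \((\pm1)\)-eigenspaces (facts the paper records just before the proposition), the proof itself is the spectral-sequence comparison via Lemma~\ref{thm:Car-serre}, reducing to the non-equivariant Poincar\'e duality isomorphism on the \(E_{1}\)~page. The paper's proof is just your final paragraph, compressed to two sentences.
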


\begin{proof}
  Between the \(E_{1}\)~pages of the spectral sequences from Lemma~\ref{thm:Car-serre},
  the map induced by~\(\PD\) is the \(\RG\)-linear extension of the non-equivariant
  map~\(H^{*}(\PD)\), hence an isomorphism. Thus so is \(\PD\) itself.
\end{proof}

\section{Restriction and induction}
\label{sec:res-ind}

For ease of notation, we again stick to cohomology with closed supports
and homology with compact supports in this section. 
All results remain valid for (co)homology with the other pair of supports
and{\slash}or with twisted coefficients.

Recall that \(T\cong(S^{1})^{r}\) is a maximal torus of~\(G\) and \(W\) the corresponding Weyl group.

\subsection{Restriction}

The restriction~\(\RG\to\RT\) is an isomorphism onto the subalgebra~\((\RT)^{W}\)
of Weyl group invariants.
Moreover, there is a canonical isomorphism of \(\RG\)-modules
\begin{equation}
  \RT = \RG\otimes H^{*}(G/T)
\end{equation}
where \(\RG\) acts only on the first factor of the tensor product.
In particular, \(\RT\) is finitely generated and free over~\(\RG\).
As a consequence, any \(\RT\)-module is finitely generated over~\(\RT\)
if and only if it is finitely generated over~\(\RG\).

Let \((X,Y)\) be a \(G\)-pair. It is canonically a \(T\)-pair by restriction of the action.
The following proposition is a special case
of general results about restrictions of \(\Gstar\)-modules, 
see~\cite[Sec.~6.8]{GuilleminSternberg:1999}.


\begin{proposition}
  \label{thm:HGc-res-ind}
  There are the following isomorphisms, natural in~\((X,Y)\):
  \begin{enumerate}
  \item
    \label{thm:HGc-res-ind-1}
    As \(\RG\)-modules,
    \begin{equation*}
      \HG^{*}(X,Y) = \HT^{*}(X,Y)^{W}
      \quad\text{and}\quad
      \hHG_{*}(X,Y)=\hHT_{*}(X,Y)^{W}.
    \end{equation*}
  \item
    \label{thm:HGc-res-ind-2}
    As \(\RT\)-modules,
    \begin{equation*}
      \HT^{*}(X,Y)=\RT\otimes_{\RG}\HG^{*}(X,Y)
      \quad\text{and}\quad
      \hHT_{*}(X,Y)=\RT\otimes_{\RG}\hHG_{*}(X,Y).
    \end{equation*}
  \end{enumerate}
\end{proposition}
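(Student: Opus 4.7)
I would prove both parts at the level of Cartan complexes and then pass to (co)homology, applying the same arguments first to the $\Gstar$-module $A=\Omega^*(X,Y)$ to get the cohomology isomorphisms and then to its dual $\Gstar$-module $A^\vee=\Omega^*(X,Y)^\vee$ (whose $\Gstar$-structure was already constructed in Section~\ref{sec:cartan-models}) to get the homology isomorphisms. Naturality in $(X,Y)$ will be automatic from the naturality of everything in $B$.

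The key chain-level input, which I would import essentially verbatim from Guillemin--Sternberg Section~6.8, is that for any $\Gstar$-module $B$ there are natural isomorphisms
$$ \CarG^*(B) \xrightarrow{\ \sim\ } \CarT^*(B)^W \qquad\text{and}\qquad \RT\otimes_{\RG}\CarG^*(B) \xrightarrow{\ \sim\ } \CarT^*(B) $$
of dg $\RG$- and dg $\RT$-modules respectively. Both rest on two classical algebraic facts: Chevalley's theorem, identifying $\RG$ with $(\RT)^W$ via restriction of polynomials from $\ggg$ to $\ttt$; and Chevalley--Shephard--Todd, making $\RT$ free of rank $|W|$ over $\RG$ (equivalent to the decomposition $\RT=\RG\otimes H^*(G/T)$ recalled in the excerpt). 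The first isomorphism is obtained by applying Chevalley's theorem with coefficients in $B$ in each polynomial degree; the second uses the freeness together with the identification of $\RT$, as an $(\RG,W)$-bimodule, with $\RG$ tensored with the regular representation of $W$, so that taking $W$-invariants matches the $\RG$-summand.

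Passing to cohomology is then formal. Because $W$ is finite and we work in characteristic zero, taking $W$-invariants is an exact functor and commutes with $H^*$, yielding $\HG^*(X,Y)=\HT^*(X,Y)^W$. Because $\RT$ is flat (indeed free) over $\RG$, the functor $\RT\otimes_{\RG}-$ also commutes with $H^*$, yielding $\HT^*(X,Y)=\RT\otimes_{\RG}\HG^*(X,Y)$. Repeating both arguments with $B=\Omega^*(X,Y)^\vee$ produces the two homology analogues. The one genuinely substantive step is the chain-level identification $\CarG^*(B)\cong \CarT^*(B)^W$ for an arbitrary $\Gstar$-module, which is the main obstacle and carries the real algebraic weight; once that is cited from Guillemin--Sternberg, the rest of the proof is bookkeeping about exactness of averaging over $W$ and flatness of $\RT$ over $\RG$.
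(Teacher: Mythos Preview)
Your overall strategy—reducing to a statement about arbitrary $\Gstar$-modules $B$ and then specializing to $B = \Omega^*(X,Y)$ and $B = \Omega^*(X,Y)^\vee$—is sound and matches the paper, which simply cites \cite[Sec.~6.8]{GuilleminSternberg:1999}. However, the chain-level isomorphisms you invoke are false. In polynomial degree~$0$ the first one would force $(B^n)^G = (B^n)^{N_G(T)}$ for every graded piece of~$B$; already for $G = SU(2)$ acting on $S^2 = G/T$ and $B = \Omega^*(S^2)$ one has $\CarG^0(B) = C^\infty(S^2)^G = \R$, whereas $\CarT^0(B)^W = \bigl(C^\infty(S^2)^T\bigr)^W$ is infinite-dimensional. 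So ``Chevalley's theorem with coefficients in~$B$'' is not a theorem; the restriction map is only a \emph{quasi}-isomorphism. Since your passage to cohomology rests on having honest chain-level isomorphisms (exactness of~$(-)^W$ and flatness of~$\RT$ do not by themselves compare $\HG^*$ with~$\HT^*$), this is a genuine gap.

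The clean repair is to compare the spectral sequences of Lemma~\ref{thm:Car-serre} for~$G$ and for~$T$. Restriction $\CarG^*(B) \to \CarT^*(B)$ induces a map of spectral sequences which on~$E_1$ is the inclusion $\RG \otimes H^*(B) \hookrightarrow \RT \otimes H^*(B)$ (the connected group~$G$ acts trivially on~$H^*(B)$). Extending scalars along the free extension $\RG \hookrightarrow \RT$ turns this into an isomorphism commuting with the $\RT$-linear differentials; by flatness this persists through all pages and on the abutment gives $\HT^*(B) = \RT \otimes_{\RG} \HG^*(B)$. Taking $W$-invariants and using $\RT^W = \RG$ then yields part~\eqref{thm:HGc-res-ind-1}. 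Your handling of the homology case via~$B^\vee$ is fine once this is in place.
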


The cohomological parts are well-known and imply
in particular that \(\HG^{*}(X,Y)\) is finitely generated over~\(\RG\) if and only if
\(\HT^{*}(X,Y)\) is finitely generated over~\(\RT\).
This also follows from the fact that both conditions are equivalent
to \(H^{*}(X,Y)\) being finite-dimensional.

\begin{proposition}
  \label{thm:syz-res}
  Let \((X,Y)\) be a \(G\)-pair such that \(\HG^{*}(X,Y)\) is finitely generated over~\(\RG\),
  and let \(0\le j\le r\). Then
  \(\HG^{*}(X,Y)\) is a \(j\)-th syzygy over~\(\RG\)
  if and only if
  \(\HT^{*}(X,Y)\) is a \(j\)-th syzygy over~\(\RT\).
\end{proposition}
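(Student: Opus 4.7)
The strategy is to combine the identification $\HT^*(X,Y)=\RT\otimes_\RG\HG^*(X,Y)$ from part~\ref{thm:HGc-res-ind-2} of Proposition~\ref{thm:HGc-res-ind} with the fact (recorded just before the statement) that $\RT$ is free, hence faithfully flat, over~$\RG$. With these two inputs each direction becomes a formal consequence.

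For the forward direction I would argue directly from the free-resolution definition of a syzygy. Starting from an exact sequence $0\to\HG^*(X,Y)\to F_1\to\dots\to F_j$ with finitely generated free $\RG$-modules~$F_i$, the flat functor $\RT\otimes_\RG(-)$ preserves exactness and sends each $F_i$ to a finitely generated free $\RT$-module; the resulting sequence begins with $\HT^*(X,Y)$, exhibiting it as a $j$-th syzygy over~$\RT$.

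For the converse I would use the characterization of $j$-th syzygies as those finitely generated modules on which every regular sequence of length~$j$ in the ring acts regularly (Proposition~\ref{thm:syzygy}). Let $f_1,\dots,f_j\in\RG$ be an $\RG$-regular sequence. Flatness of $\RT$ over~$\RG$ implies that $f_1,\dots,f_j$ remains regular in~$\RT$, so by the hypothesis on $\HT^*(X,Y)$ it is regular on that module. Since $\RT$ is faithfully flat over~$\RG$ and $\HT^*(X,Y)=\RT\otimes_\RG\HG^*(X,Y)$, this regularity descends to $\HG^*(X,Y)$: multiplication by $f_i$ on $\HG^*(X,Y)/(f_1,\dots,f_{i-1})\HG^*(X,Y)$ is injective iff its image under $\RT\otimes_\RG(-)$---multiplication by $f_i$ on $\HT^*(X,Y)/(f_1,\dots,f_{i-1})\HT^*(X,Y)$---is injective, which we already know. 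Hence $f_1,\dots,f_j$ is $\HG^*(X,Y)$-regular, and Proposition~\ref{thm:syzygy} yields that $\HG^*(X,Y)$ is a $j$-th syzygy over~$\RG$.

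I do not expect a substantial obstacle: the argument is standard flat base-change combined with the two hinge results already available (Propositions~\ref{thm:syzygy} and~\ref{thm:HGc-res-ind}), and both $\RG$ and $\RT$ are polynomial rings in $r$ indeterminates so that Proposition~\ref{thm:syzygy} applies in either.
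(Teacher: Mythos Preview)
Your argument is correct and the forward direction is verbatim the paper's. For the converse you and the paper both appeal to criterion~\eqref{thm:syzygy-2} of Proposition~\ref{thm:syzygy}, but via slightly different intermediate steps: the paper first observes that the $\RT$-free modules in a syzygy resolution of~$\HT^{*}(X,Y)$ are also finitely generated free over~$\RG$, so $\HT^{*}(X,Y)\cong\HG^{*}(X,Y)\otimes H^{*}(G/T)$ is already a $j$-th syzygy over~$\RG$, and then passes to the direct summand~$\HG^{*}(X,Y)$; you instead push an $\RG$-regular sequence up to~$\RT$ by flatness, apply the $\RT$-syzygy hypothesis there, and descend by faithful flatness. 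Both routes exploit the same structural fact (freeness of~$\RT$ over~$\RG$) and are of comparable length; the paper's version avoids explicitly naming faithful flatness, while yours avoids the detour through $\RG$-syzygies of~$\HT^{*}(X,Y)$.
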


\begin{proof}
  Assume that \(\HG^{*}(X,Y)\) is a \(j\)-th syzygy over~\(\RG\).
  By definition, there is an exact sequence
  \begin{equation}
    0 \to \HG^{*}(X,Y) \to F_{1}\to \dots \to F_{j}
  \end{equation}
  with finitely generated free \(\RG\)-modules~\(F_{i}\).
  Because \(\RT\) is free over~\(\RG\), we obtain the exact sequence
  \begin{equation}
    0 \to \RT\otimes_{\RG}\HG^{*}(X,Y) \to \RT\otimes_{\RG}F_{1}\to \dots \to \RT\otimes_{\RG}F_{j}
  \end{equation}
  with finitely generated free \(\RT\)-modules~\(\RT\otimes_{\RG}F_{i}\). This shows
  that \(\HT^{*}(X,Y)=\RT\otimes_{\RG}\HG^{*}(X,Y)\) is a \(j\)-th syzygy over~\(\RT\).

  Now assume that \(\HT^{*}(X,Y)\) is a \(j\)-th syzygy over~\(\RT\).
  This time there is an exact sequence
  \begin{equation}
    0 \to \HT^{*}(X,Y) \to F_{1}\to \dots \to F_{j}
  \end{equation}
  with finitely generated free \(\RT\)-modules~\(F_{i}\).
  Since the~\(F_{i}\) are also finitely generated and free over~\(\RG\),
  this shows that
  \begin{equation}
    \HT^{*}(X,Y) = \HG^{*}(X,Y)\otimes H^{*}(G/T)
  \end{equation}
  is a \(j\)-th syzygy over~\(\RG\). It now follows from criterion~\eqref{thm:syzygy-2} or~\eqref{thm:syzygy-3}
  of Proposition~\ref{thm:syzygy} that the same holds for~\(\HG^{*}(X,Y)\) itself.
\end{proof}

\subsection{Induction}

Let \((X,Y)\) be a \(T\)-pair,
say contained in the \(T\)-orbifold~\(Z\).
Then
\begin{equation}
  \Xhat=G\times_{T}X
  \quad\text{and}\quad  
  \Yhat=G\times_{T}Y
\end{equation}
are  closed \(G\)-stable subsets
of the \(G\)-orbifold~\(\Zhat=G\times_{T}Z\), hence \((\Xhat,\Yhat)\) is a \(G\)-pair.
There is a canonical inclusion~\(Z\hookrightarrow\Zhat\) sending \(z\) to~\([1,z]\),
equivariant with respect to the inclusion~\(T\hookrightarrow G\).
Also note that any \(\RT\)-module is canonically an \(\RG\)-module via the restriction map~\(\RG\to\RT\).

\begin{lemma}
  \label{thm:iso-HGX-HTY}
  The inclusion of pairs~\((X,Y)\hookrightarrow(\Xhat,\Yhat)\)
  induces an isomorphism of \(\RG\)-modules
  \begin{equation*}
    \HG^{*}(\Xhat,\Yhat) = \HT^{*}(X,Y)
  \end{equation*}
\end{lemma}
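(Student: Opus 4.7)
The plan is to identify both sides with Borel equivariant Alexander--Spanier cohomology via Lemma~\ref{thm:iso-as-dr-equiv} and then invoke the classical homeomorphism
\[
\varphi\colon X\times_T EG \xrightarrow{\;\sim\;} \Xhat\times_G EG,\qquad [x,e]_T \mapsto [[1,x],e]_G,
\]
whose inverse sends $[[g,x],e]_G$ to $[x,g^{-1}e]_T$. The well-definedness on both sides reduces to the $T$-relation $[g,x]=[gt^{-1},tx]$ in~$\Xhat$, and the same formulas yield a corresponding homeomorphism over $Y$ and~$\Yhat$. The key observation is that $\varphi$ is the factorization through the canonical projection $\Xhat\times_T EG\to\Xhat\times_G EG$ of the map induced by the $T$-equivariant inclusion $\iota\colon X\hookrightarrow\Xhat$, $x\mapsto[1,x]$; hence the composition
\[
\HG^*(\Xhat,\Yhat)\longrightarrow\HT^*(\Xhat,\Yhat)\xrightarrow{\iota^*}\HT^*(X,Y),
\]
which is the map claimed to be an isomorphism in the lemma, corresponds under Lemma~\ref{thm:iso-as-dr-equiv} to~$\varphi^*$ applied to finite-dimensional approximations.

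I would then apply Lemma~\ref{thm:iso-as-dr-equiv} concretely as follows. Fix a family $(EG_k)$ of compact free $G$-manifolds with $EG_k$ sufficiently highly connected; since $T\subset G$ acts freely on each~$EG_k$ with the same connectivity, the same~$EG_k$ serve as models for~$ET$ in the relevant range. Thus both $\HG^n(\Xhat,\Yhat)$ and $\HT^n(X,Y)$ are computed from them for $k\gg n$. The homeomorphisms $\varphi_k$ restrict to homeomorphisms of pairs and therefore induce isomorphisms on cohomology in each degree~$n$.

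To conclude, I would check compatibility with the $\RG$-module structures. On the left, the structure is pulled back from the classifying map $\Xhat\times_G EG\to BG$; on the right, the $\RG$-action factors through $\RG\to\RT=H^*(BT)$ and the classifying map $X\times_T EG\to BT$. The composition $X\times_T EG\to BT\to BG$ coincides, under~$\varphi$, with the $BG$-classifying map on the left, so the two structures agree. The only real obstacle in the whole argument is notational: one must carefully disentangle the two different quotient actions when verifying that $\varphi$ and its inverse are well defined. There is no deeper mathematical difficulty here, since the underlying fact is the standard induction identity $\Xhat\times_G EG\cong X\times_T EG$ for Borel constructions.
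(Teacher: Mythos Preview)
Your argument is correct. The paper, however, does not give its own proof: it simply cites Duflo--Vergne \cite[Thm.~24]{DufloVergne:1993}, which establishes the isomorphism directly within the Cartan model by an equivariant de~Rham computation. Your route is genuinely different: you pass through Lemma~\ref{thm:iso-as-dr-equiv} to the Borel model and then use the classical induction identity $\Xhat\times_{G}EG\cong X\times_{T}EG$ on finite-dimensional approximations. The de~Rham approach has the virtue of staying inside the Cartan complexes that the paper uses throughout, and of yielding the $\RG$-linearity automatically at the chain level; your approach is more elementary and self-contained, trading the Cartan-model machinery for a direct homeomorphism of homotopy quotients, at the cost of having to invoke Lemma~\ref{thm:iso-as-dr-equiv} twice and to verify the $\RG$-compatibility by hand via classifying maps. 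Both are standard; yours is arguably the more transparent explanation of \emph{why} the isomorphism holds.
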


\begin{proof}
  See~
  \cite[Thm.~24]{DufloVergne:1993} for a proof using equivariant de~Rham theory.
\end{proof}

In particular,
\(\HG^{*}(\Xhat,\Yhat)\) is finitely generated over~\(\RG\)
if and only if \(\HT^{*}(X,Y)\) is finitely generated over~\(\RT\).

To study the behaviour of syzygies under induction from~\(T\) to~\(G\),
we need the following simple algebraic fact.

\begin{lemma}
  \label{thm:depth-loc}
  Let \(A\subset B\) be an extension of commutative rings, \(\qqq\lhd B\) be a prime ideal
  and \(\ppp=\qqq\cap A\). Then for any \(B\)-module~\(M\),
  \begin{equation*}
    \depth_{B_{\qqq}}M_{\qqq} \ge \depth_{A_{\ppp}}M_{\ppp}.
  \end{equation*}
\end{lemma}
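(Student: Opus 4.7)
The plan is to lift a maximal $M_\ppp$-regular sequence from $\ppp A_\ppp$ to an $M_\qqq$-regular sequence in $\qqq B_\qqq$, transferring the non-zero-divisor property via exactness of localization.

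First I would set $d=\depth_{A_\ppp}M_\ppp$ and pick an $M_\ppp$-regular sequence $a_1,\ldots,a_d$ in $\ppp A_\ppp$. Since denominators lie in $A\setminus\ppp$ and are therefore units in $A_\ppp$, I may clear them and assume $a_i\in\ppp\subset A$. The hypothesis $\ppp=\qqq\cap A$ then places each $a_i/1$ in $\qqq B_\qqq$.

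Next, the key algebraic observation I would record is that $A\setminus\ppp\subset B\setminus\qqq$, so that $M_\qqq=(B\setminus\qqq)^{-1}M_\ppp$ as $B$-modules, and this further localization is an exact functor. Because the $A$-action on $M$ factors through $B$, multiplication by $a_i$ acts identically whether $M_\qqq$ is regarded over $A_\ppp$ or over $B_\qqq$. I would then localize the injective map given by multiplication by $a_i$ on $M_\ppp/(a_1,\ldots,a_{i-1})M_\ppp$ to obtain the analogous injection on $M_\qqq/(a_1,\ldots,a_{i-1})M_\qqq$ (localization commutes with quotients), showing that $a_1,\ldots,a_d$ forms an $M_\qqq$-regular sequence in $\qqq B_\qqq$, and hence $\depth_{B_\qqq}M_\qqq\geq d$.

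The main subtlety I expect is verifying the nonvanishing of the final quotient $M_\qqq/(a_1,\ldots,a_d)M_\qqq$ required by the usual convention for a regular sequence. Since $(a_1,\ldots,a_d)\subset\qqq B_\qqq$, Nakayama's lemma supplies this whenever $M_\qqq$ is finitely generated over $B_\qqq$, which holds in the intended applications (where $M$ is a finitely generated $B$-module). Should instead $M_\qqq=0$, then $\depth_{B_\qqq}M_\qqq=\infty$ by convention and the inequality is trivially satisfied.
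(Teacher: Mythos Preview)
Your proof is correct and follows essentially the same approach as the paper: both lift an $M_{\ppp}$-regular sequence from $\ppp A_{\ppp}$ to an $M_{\qqq}$-regular sequence in $\qqq B_{\qqq}$. Where the paper verifies the required injectivity by an explicit element chase, you invoke the exactness of the further localization $M_{\ppp}\to M_{\qqq}$ (noting $A\setminus\ppp\subset B\setminus\qqq$), and you additionally address the nonvanishing condition via Nakayama, a point the paper passes over in silence.
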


\begin{proof}
  Let \(a_{1}\),~\dots,~\(a_{k}\in\ppp_{\ppp}\subset\qqq_{\qqq}\) be an \(M_{\ppp}\)-regular sequence.
  By induction on~\(k\) we show that this sequence is also \(M_{\qqq}\)-regular.
  The case~\(k=0\) is void.

  Assume
  \begin{equation}
    a_{k}\frac{m_{k}}{s_{k}} = a_{1}\frac{m_{1}}{s_{1}}+\dots+a_{k-1}\frac{m_{k-1}}{s_{k-1}}
  \end{equation}
  for some~\(m_{1}\),~\dots,~\(m_{k}\in M\) and some~\(s_{1}\),~\dots,~\(s_{k}\in B\setminus\qqq\).
  Then, for some~\(s\in B\setminus\qqq\),
  \begin{equation}
    a_{k}(s s_{1},\dots,s_{k-1} m_{k}) \in (a_{1},\dots,a_{k-1})M_{\ppp}.
  \end{equation}
  Since \(a_{1}\),~\dots,~\(a_{k}\) is \(M_{\ppp}\)-regular, this implies
  \(s s_{1}\cdots s_{k-1} m_{k} \in (a_{1},\dots,a_{k-1})M_{\ppp}\) and therefore
  \begin{equation}
    \frac{m_{k}}{s_{k}}\in(a_{1},\dots,a_{k-1})M_{\qqq}.
  \end{equation}
  Hence multiplication by~\(a_{k}\) is injective on~\(M_{\qqq}/(a_{1},\dots,a_{k-1})M_{\qqq}\),
  which means that the sequence is \(M_{\qqq}\)-regular.
\end{proof}

\begin{proposition}
  \label{thm:syz-ind}
  Let \((X,Y)\)~and~\((\Xhat,\Yhat)\) be as before, and assume
  that \(\HT^{*}(X,Y)\) is finitely generated over~\(\RT\). Then
  \(\HG^{*}(\Xhat,\Yhat)\) is a \(j\)-th syzygy over~\(\RG\)
  if and only if
  \(\HT^{*}(X,Y)\) is a \(j\)-th syzygy over~\(\RT\).
\end{proposition}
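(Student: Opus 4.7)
The plan is to apply the depth-at-each-prime criterion of Proposition~\ref{thm:syzygy}\,\eqref{thm:syzygy-3}, combined with Lemma~\ref{thm:depth-loc} and the fact noted at the beginning of Section~\ref{sec:res-ind} that $\RT$ is finitely generated and free as an $\RG$-module. Under the identification of Lemma~\ref{thm:iso-HGX-HTY}, the module $M=\HG^{*}(\Xhat,\Yhat)=\HT^{*}(X,Y)$ is literally the same on both sides; only the ring structure changes. So the statement is really about how the syzygy property for a single module transfers between $\RG$ and the larger ring $\RT$.

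For the easy direction, from $\RT$-syzygy to $\RG$-syzygy, I would start with an exact sequence
\begin{equation*}
  0\to M\to F_{1}\to\cdots\to F_{j}
\end{equation*}
of finitely generated free $\RT$-modules. Because $\RT$ itself is finitely generated and free over $\RG$, each $F_{i}$ is again finitely generated and free as an $\RG$-module, and the sequence remains exact after restriction of scalars. This step is parallel to the first half of the proof of Proposition~\ref{thm:syz-res}.

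For the converse, I would argue prime by prime. Given $\qqq\lhd\RT$, set $\ppp=\qqq\cap\RG$. Lemma~\ref{thm:depth-loc} gives $\depth_{\RT_{\qqq}}M_{\qqq}\ge\depth_{\RG_{\ppp}}M_{\ppp}$, and the hypothesis together with Proposition~\ref{thm:syzygy}\,\eqref{thm:syzygy-3} bounds the right-hand side below by $\min(j,\dim\RG_{\ppp})$. Since $\RT$ is finitely generated and free over $\RG$, the extension $\RG\subset\RT$ is both integral (so primes lying over $\ppp$ are incomparable) and flat (so going down holds), which together yield $\dim\RG_{\ppp}=\dim\RT_{\qqq}$. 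Combining these gives $\depth_{\RT_{\qqq}}M_{\qqq}\ge\min(j,\dim\RT_{\qqq})$, and another application of Proposition~\ref{thm:syzygy}\,\eqref{thm:syzygy-3} then shows that $M$ is a $j$-th syzygy over $\RT$.

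The only real point to verify carefully is the equality of Krull dimensions $\dim\RG_{\ppp}=\dim\RT_{\qqq}$, which is what lets the $\min(j,\cdot)$ in the syzygy criterion match on both sides. Everything else is a routine assembly of the commutative-algebra results already collected in this section.
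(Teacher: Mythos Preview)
Your proposal is correct and follows essentially the same route as the paper's own proof: the easy direction via restriction of scalars along the free extension \(\RG\hookrightarrow\RT\), and the converse via Proposition~\ref{thm:syzygy}\,\eqref{thm:syzygy-3} combined with Lemma~\ref{thm:depth-loc} and the equality \(\dim(\RG)_{\ppp}=\dim(\RT)_{\qqq}\). The only cosmetic difference is that the paper invokes the going-up theorem (using that \(\RG\subset\RT\) is integral) for the height equality, whereas you phrase it via incomparability plus going-down from flatness; both justifications are valid here.
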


\begin{proof}
  Assume that \(\HT^{*}(X,Y)\) is a \(j\)-th syzygy over~\(\RT\).
  Then there is an exact sequence
  \begin{equation}
    0 \to \HT^{*}(X,Y) \to F_{1}\to \dots \to F_{j}
  \end{equation}
  with finitely generated free \(\RT\)-modules~\(F_{i}\).
  Since the~\(F_{i}\) are also finitely generated and free over~\(\RG\),
  Lemma~\ref{thm:iso-HGX-HTY} implies that \(\HG^{*}(\Xhat,\Yhat)\)
  is a \(j\)-th syzygy over~\(\RG\).

  Now assume that \(\HG^{*}(\Xhat,\Yhat)\) is a \(j\)-th syzygy over~\(\RG\).
  Let \(\qqq\lhd\RT\) be a prime ideal and set \(\ppp=\qqq\cap\RG\).
  Because \(\RT\supset\RG\) is an integral extension of commutative rings,
  Cohen--Seidenberg's going-up theorem implies
  \begin{equation}
    \dim (\RG)_{\qqq} = \height_{\RG}\qqq = \height_{\RT}\ppp
    = \dim (\RT)_{\ppp} \,;
  \end{equation}
  we also have
  \begin{align}
    \depth \HT^{*}(X,Y)_{\qqq} &\ge \depth \HG^{*}(\Xhat,\Yhat)_{\ppp} \\
  \shortintertext{by combining Lemmas~\ref{thm:iso-HGX-HTY} and~\ref{thm:depth-loc}.
    Using Proposition~\ref{thm:syzygy} and the assumption that \(\HG^{*}(\Xhat,\Yhat)\) is a \(j\)-th syzygy,
    we conclude}
    \depth \HT^{*}(X,Y)_{\qqq} 
    &\ge \min(j,\dim (\RG)_{\ppp}) = \min(j,\dim (\RT)_{\qqq}).
  \end{align}
  Thus, \(\HT^{*}(X,Y)\) is a \(j\)-th syzygy over~\(\RT\).
\end{proof}

\begin{remark}
  The case~\(j=1\) of Proposition~\ref{thm:syz-ind}
  can be shown more easily, see the proof of~\cite[Thm.~C.70]{GuilleminGinzburgKarshon:2002}
  or~\cite[Thm.~3.9]{GoertschesRollenske:2011}:
  If \(f\in\RT\) is non-regular for~\(\HT^{*}(X,Y)=\HG^{*}(\Xhat,\Yhat)\),
  then the product~\(\prod_{w\in W}w\cdot f\) gives another such element in~\((\RT)^{W}=\RG\).
\end{remark}

\begin{example}
  Let \(X\) be a projective toric manifold of dimension~\(2r\) with moment polytope~\(P=X/T\).
  It is well-known that \(\HT^{*}(X)\) is free over~\(\RT\) in this case.
  
  Choose two distinct fixed points~\(x\),~\(y\in X\) and set \(Y=X\setminus\{x,y\}\).
  Let \(F\subset P\) be the smallest face of~\(P\) containing (the images of) \(x\)~and~\(y\).
  As shown in~\cite[Sec.~6.1]{Franz:orbits3}, the equivariant cohomology~\(\HT^{*}(Y)\)
  with closed supports is a syzygy of order exactly~\(\dim F-1\) over~\(\RT\);
  the case \(X=(S^{2})^{r}\) with two diametrically opposite vertices~\(x\) and~\(y\)
  of the \(r\)-cube~\(P\) appeared already in~\cite[Sec.~6.1]{AlldayFranzPuppe:orbits1}.
  
  From Proposition~\ref{thm:syz-ind} we see that the equivariant cohomology with closed supports
  of the induced \(G\)-manifold~\(G\times_{T}Y\)
  is a syzygy of order exactly~\(\dim F-1\) over~\(\RG\).
  In particular, syzygies of any order can appear as the equivariant cohomology
  of \(G\)-manifolds.
\end{example}

\section{The constant rank case}
\label{sec:constant}


Let \(X\) be a \(G\)-space such that all \(G\)-isotropy groups in~\(X\) have the same rank,
say equal to~\(b\in\N\). Let
\begin{equation}
  \label{eq:def-Y}
  Y = \{\, x\in X \mid \rank T_{x} = b \,\}
\end{equation}
be the highest-rank stratum for the \(T\)-action on~\(X\);
it is \(N_{G}(T)\)-stable and, by the slice theorem, closed in~\(X\).
For a subtorus~\(L\subset T\) of rank~\(b\), we define
\begin{align}
  X(L) &= \{\, x\in X \mid \text{\(L\) is conjugate in~\(G\) to a maximal torus of~\(G_{x}\)} \,\}, \\
  Y(L) &= \{\,x\in X \mid \text{\(L\) is a maximal torus of~\(G_{x}\)} \,\} = X^{L}.
\end{align}
Note that \(X\) is the union of all such~\(X(L)\),
and \(Y\) is the disjoint union of all such~\(Y(L)\).
Each~\(Y(L)\) is closed in~\(X\).

\begin{lemma}
  \label{thm:properties-XL}
  \( \)
  \begin{enumerate}
  \item The sets~\(X(L)\) partition \(X\).
  \item Each~\(X(L)\) is open in~\(X\).
  \item Each \(X(L)\) is a union of connected components of~\(X\).
  \end{enumerate}
\end{lemma}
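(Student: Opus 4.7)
The plan is to prove the three statements in order, with the differentiable slice theorem applied to the ambient $G$-orbifold~$Z$ in which $X$ is closed and $G$-stable serving as the main tool.

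For~(i), for every~$x\in X$ the isotropy~$G_{x}$ has rank~$b$ by hypothesis, so any maximal torus of~$G_{x}$ has rank~$b$ and is $G$-conjugate to a (necessarily rank-$b$) subtorus of~$T$; thus $x\in X(L)$ for some such~$L$. Conversely, if $x\in X(L_{1})\cap X(L_{2})$, then both $L_{1}$ and~$L_{2}$ are $G$-conjugate to maximal tori of~$G_{x}$, and since any two maximal tori of~$G_{x}$ are $G_{x}$-conjugate, $L_{1}$ and~$L_{2}$ are themselves $G$-conjugate. But then $X(L_{1})=X(L_{2})$, so distinct sets $X(L)$ are disjoint and the collection is indeed a partition.

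For~(ii), fix $x\in X(L)$ and choose $g_{0}\in G$ so that $L_{0}=g_{0}Lg_{0}^{-1}$ is a maximal torus of~$G_{x}$. By the differentiable slice theorem applied to~$Z$, some $G$-invariant open neighborhood of~$G\cdot x$ in~$Z$ is $G$-equivariantly homeomorphic to $G\times_{G_{x}}S$ for a suitable $G_{x}$-slice~$S$ at~$x$. Any point~$y$ of~$X$ in this neighborhood has the form $y=[g,s]$ with isotropy $G_{y}=g\,(G_{x})_{s}\,g^{-1}$. Since $\rank G_{y}=b=\rank G_{x}$, the subgroup $(G_{x})_{s}\subset G_{x}$ has full rank, so every maximal torus of~$(G_{x})_{s}$ is also a maximal torus of~$G_{x}$ and is therefore $G_{x}$-conjugate to~$L_{0}$. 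Conjugating by~$g$ shows that a maximal torus of~$G_{y}$ is $G$-conjugate to~$L$, whence $y\in X(L)$.

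Finally, (iii) is a formal consequence of (i) and~(ii): by~(i), $X$ is partitioned by the sets~$X(L)$ as $[L]$ runs through $G$-conjugacy classes of rank-$b$ subtori of~$T$, and by~(ii) each such set is open in~$X$. Hence each~$X(L)$ is also closed, being the complement of the union of the other~$X(L')$, and so it is a union of connected components of~$X$.

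The only real subtlety is to apply the slice theorem carefully in the orbifold setting — since $X$ itself need not be smooth — and to observe that the constant-rank hypothesis forces the slice isotropies~$(G_{x})_{s}$ to have full rank in~$G_{x}$, which is exactly what makes the maximal tori match up. Everything else is bookkeeping about $G$-conjugacy of maximal tori.
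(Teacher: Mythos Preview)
Your proof is correct and follows essentially the same approach as the paper: conjugacy of maximal tori in~\(G_{x}\) for part~(i), the slice theorem plus the constant-rank hypothesis for part~(ii), and the formal open-and-closed argument for part~(iii). Your version is more explicit---in particular the observation that the slice theorem must be applied to the ambient orbifold~\(Z\) rather than to~\(X\), and the spelled-out reason why a full-rank subgroup of~\(G_{x}\) has the same maximal tori---but the underlying ideas are identical.
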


\begin{proof}
  Let \(x\in X(L)\).
  If in addition \(x\in X(L')\), then \(X(L)=X(L')\) because all maximal tori of~\(G_{x}\) are conjugate.
  By the slice theorem, any~\(y\) in a sufficiently small neighbourhood of~\(x\) has an isotropy group
  that is conjugate to a subgroup of~\(G_{x}\). By our assumption on~\(X\), this means that \(G_{y}\)
  contains a maximal torus conjugate to a maximal torus of~\(G_{x}\), whence \(y\in X(L)\).
  The last claim follows from the first two.
\end{proof}

The following result is due to Goertsches--Rollenske;
it appears in the proof of~\cite[Prop.~4.2]{GoertschesRollenske:2011}.

\begin{lemma}
  \label{thm:orbit-YL}
  \(G\cdot y\cap Y(L) = N_{G}(L)\cdot y\) for any~\(y\in Y(L)\).
\end{lemma}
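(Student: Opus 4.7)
The plan is to verify both inclusions directly from the definition \(Y(L)=X^L\) plus the characterization that the points of~\(Y(L)\) are exactly those~\(x\) for which \(L\) is a maximal torus of~\(G_x\).

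First I would verify the inclusion \(N_G(L)\cdot y\subseteq G\cdot y\cap Y(L)\). If \(n\in N_G(L)\) and \(y\in Y(L)\), then for every \(\ell\in L\) we have \(\ell\cdot(n\cdot y)=n\cdot(n^{-1}\ell n\cdot y)=n\cdot y\) since \(n^{-1}\ell n\in L\) fixes~\(y\); so \(n\cdot y\in X^L\). Moreover \(G_{n\cdot y}=nG_y n^{-1}\) and \(nLn^{-1}=L\), so \(L\) is a maximal torus of~\(G_{n\cdot y}\), i.e.\ \(n\cdot y\in Y(L)\).

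For the reverse inclusion \(G\cdot y\cap Y(L)\subseteq N_G(L)\cdot y\), suppose \(g\cdot y\in Y(L)\). Then \(L\) is a maximal torus of~\(G_{g\cdot y}=gG_y g^{-1}\), so \(g^{-1}L g\) is a maximal torus of~\(G_y\). But \(L\) itself is a maximal torus of~\(G_y\) by hypothesis. Since any torus in the compact Lie group~\(G_y\) lies in its identity component \(G_y^{0}\), and any two maximal tori of a compact connected Lie group are conjugate, there exists \(h\in G_y^{0}\subseteq G_y\) with \(hLh^{-1}=g^{-1}Lg\). Setting \(n=gh\) we get \(nLn^{-1}=L\), so \(n\in N_G(L)\), while \(n\cdot y=g\cdot(h\cdot y)=g\cdot y\) because \(h\in G_y\). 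Hence \(g\cdot y\in N_G(L)\cdot y\), as required.

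The only delicate point is the appeal to conjugacy of maximal tori in the possibly disconnected isotropy group~\(G_y\); this is handled by the observation that tori are connected and hence sit inside~\(G_y^{0}\), where the standard conjugacy theorem applies. Everything else is essentially bookkeeping with stabilizers.
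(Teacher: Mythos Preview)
Your proof is correct and follows essentially the same argument as the paper's: the inclusion~\(\supset\) is the \(N_G(L)\)-stability of~\(Y(L)\), and the inclusion~\(\subset\) reduces to the conjugacy of two maximal tori in an isotropy group. The only cosmetic difference is that the paper conjugates inside~\(G_{gy}\) (finding \(h\in G_{gy}\) with \(h^{-1}Lh=gLg^{-1}\)), whereas you conjugate inside~\(G_y\); your version is also slightly more explicit about why conjugacy holds in the possibly disconnected group~\(G_y\).
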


\begin{proof}
  \(\supset\): because \(Y(L)\) is stable under~\(N_{G}(L)\).

  \(\subset\): Assume \(gy\in Y(L)\) for some~\(g\in G\).
  Then \(L\) is a maximal torus of both \(G_{y}\) and \(G_{gy}\).
  In other words, \(L\) and \(g L g^{-1}\) are both maximal tori of~\(G_{gy}\),
  hence there is an \(h\in G_{gy}\) such that \(h^{-1} L h=g L g^{-1}\)
  or \((hg)L(hg)^{-1}=L\). This shows that \(hg\in N_{G}(L)\), and therefore
  \(gy = hgy \in N_{G}(L)\cdot y\).
\end{proof}

Let us write \(K=N_{G}(L)\supset T\).

\begin{lemma}
  \label{thm:NGT-NKT-YL-Y}
  The map \(\displaystyle q\colon N_{G}(T) \times_{N_{K}(T)} Y(L)\to Y\cap X(L)\)
  induced by the \(G\)-action is an \(N_{G}(T)\)-equivariant homeomorphism.
\end{lemma}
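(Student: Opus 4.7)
My plan is to analyze $q([n,y]) = ny$ in four stages: well-definedness and basic properties; surjectivity; injectivity; and the upgrade from continuous bijection to homeomorphism. First, $q$ is well-defined on the balanced product since $Y(L)$ is $K$-stable, continuous because the left-hand side carries the quotient topology, and $N_G(T)$-equivariant under left multiplication on the first factor by inspection. To see the image lies in $Y \cap X(L)$, for $y \in Y(L)$ and $n \in N_G(T)$ the subtorus $nLn^{-1}$ is a maximal torus of $G_{ny} = nG_yn^{-1}$ and is contained in $T$, hence in $T_{ny}$; the constant-rank hypothesis then forces $(T_{ny})_0 = nLn^{-1}$, placing $ny$ in $Y \cap X(L)$.

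For surjectivity, given $z \in Y \cap X(L)$, I would use that $(T_z)_0$ is a maximal torus of $G_z$ which is $G$-conjugate to $L$; pick $g \in G$ with $gLg^{-1} = (T_z)_0$, so in particular $gLg^{-1} \subseteq T$. The maximal tori $T$ and $gTg^{-1}$ both lie in the centralizer $C := Z_G(gLg^{-1})$, and conjugacy of maximal tori in the identity component of~$C$ supplies $c \in C$ with $c\,(gTg^{-1})\,c^{-1} = T$. Setting $n := cg$ gives $n \in N_G(T)$ and $nLn^{-1} = c\,(gLg^{-1})\,c^{-1} = gLg^{-1} = (T_z)_0$, so $L \subseteq G_{n^{-1}z}$ with full rank, showing $n^{-1}z \in Y(L)$ and $q([n, n^{-1}z]) = z$.

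For injectivity, suppose $n_1 y_1 = n_2 y_2$ with $n_i \in N_G(T)$ and $y_i \in Y(L)$, and set $h := n_1^{-1} n_2 \in N_G(T)$, so $h y_2 = y_1 \in Y(L)$. Then $h^{-1} L h \subseteq G_{y_2}$ (because $L$ fixes $h y_2$) and $h^{-1} L h \subseteq T$ (because $h$ normalizes $T$), so both $L$ and $h^{-1} L h$ are subtori of $T_{y_2}$ of rank~$b$; the constant-rank hypothesis forces each to equal $(T_{y_2})_0$. Hence $h$ normalizes $L$, giving $h \in N_K(T)$ and therefore $[n_1, y_1] = [n_2, y_2]$.

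Finally, to promote the continuous bijection $q$ to a homeomorphism, I would use that $T$ is the identity component of both $N_G(T)$ and $N_K(T)$, so $N_K(T)$ is clopen in $N_G(T)$ and $N_G(T)/N_K(T)$ is finite. Choosing coset representatives $w_1, \dots, w_m$ identifies $N_G(T) \times_{N_K(T)} Y(L)$ with the disjoint union $\bigsqcup_i Y(L)$ via $y \mapsto [w_i, y]$, under which $q$ restricts on the $i$-th summand to the homeomorphism $y \mapsto w_i y$ from $Y(L)$ onto $Y(w_i L w_i^{-1})$. Distinct cosets yield distinct subtori $w_i L w_i^{-1}$ of~$T$, and since a point of $Y$ has a unique identity component $(T_x)_0$, the sets $Y(w_i L w_i^{-1})$ are pairwise disjoint; being closed in $X$ and finite in number, they form a clopen partition of $Y \cap X(L)$, whence $q$ is a homeomorphism. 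I expect the surjectivity step to be the main technical obstacle: the key device is to exploit the common centralizer $Z_G(gLg^{-1})$ to absorb the $T$-conjugation discrepancy and realize the required $n$ inside $N_G(T)$.
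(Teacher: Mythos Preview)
Your argument is correct and is structured the same way as the paper's: both sides decompose into finitely many copies of~\(Y(L)\) indexed by the cosets~\(N_{G}(T)/N_{K}(T)\), and \(q\) matches them. The one genuine difference is the surjectivity step, which amounts to the fact that any subtorus~\(L'\subset T\) that is \(G\)-conjugate to~\(L\) is already \(N_{G}(T)\)-conjugate to~\(L\). The paper proves this by applying Lemma~\ref{thm:orbit-YL} to the auxiliary \(G\)-space~\(B\) of \(b\)-dimensional abelian subalgebras of~\(\ggg\) (where all isotropy groups have full rank, so \(B^{T}\) plays the role of~\(Y(T)\) and \(G\)-orbits meet it in \(N_{G}(T)\)-orbits). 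You instead give the standard direct argument: pick \(g\) with \(gLg^{-1}=L'\), note that \(T\) and \(gTg^{-1}\) are both maximal tori of the centralizer~\(Z_{G}(L')\), and use conjugacy of maximal tori there to correct \(g\) to an element of~\(N_{G}(T)\). Your route is more elementary and self-contained; the paper's route is shorter once Lemma~\ref{thm:orbit-YL} is available and highlights that the same mechanism governs both the orbit structure of~\(X\) and the conjugacy of subtori. Your explicit treatment of well-definedness, injectivity, and the clopen decomposition is also more detailed than the paper's, which simply notes that both sides consist of copies of~\(Y(L)\) indexed by the same coset space.
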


\begin{proof}
  Clearly, \(q\) is \(N_{G}(T)\)-equivariant.
  Note that \(Y\cap X(L)\) is the disjoint union of of the~\(Y(L')\)
  where \(L'\) runs through all subtori of~\(T\) which are conjugate in~\(G\) to~\(L\).
  We claim that
  \begin{equation}
    \label{eq:W-L}
    \{\, L'\subset T \mid \text{\(L'\) is conjugate in~\(G\) to~\(L\)} \,\} = N_{G}(T)\cdot L = W\cdot L.
  \end{equation}

  \(\supset\): is trivial.

  \def\Grbg{\Gr_{\mkern2mu b}(\ggg)}
  \(\subset\): Assume \(L'=g L g^{-1}\) or, equivalently, \(\lll'=\Ad_{g}\lll\) for some~\(g\in G\).
  This means that \(\lll'\) lies in the \(G\)-orbit of~\(\lll\) for the induced action
  on the Grassmannian~\(\Grbg\). 
  The set~\(B\) of all \(b\)-dimensional abelian Lie subalgebras of~\(\ggg\)
  is closed in~\(\Grbg\) and \(G\)-stable, and all isotropy groups in~\(B\) have rank~\(r\).
  Because \(\lll\) and~\(\lll'\) are contained in~\(B^{T}\),
  Lemma~\ref{thm:orbit-YL}
  (with \(B\) instead of~\(X\) and \(L=T\)) implies
  that \(\lll'\in N_{G}(T)\cdot\lll\).

  Coming back to the original claim, we observe that
  \(N_{K}(T)=N_{G}(T)\cap K\) is the isotropy group of~\(L\)
  for the conjugation action of~\(N_{G}(T)\) on the subtori of~\(T\).
  Hence \(Y\cap X(L)\) consists, like \(N_{G}(T) \times_{N_{K}(T)} Y(L)\),
  of copies of~\(Y(L)\) indexed by the cosets~\(N_{G}(T)/N_{K}(T)\).
  Thus, \(q\) is a homeomorphism.
\end{proof}

\begin{lemma}
  \label{thm:G-K-YL}
  The map~\(\displaystyle f_{1}\colon G\times_{K}Y(L)\to X(L)\)
  induced by the \(G\)-action is a quasi-isomorphism
  (for cohomology with closed or compact supports
  and possibly twisted coefficients).
\end{lemma}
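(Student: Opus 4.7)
My approach is to show that $f_1$ is in fact a $G$-equivariant homeomorphism onto $X(L)$; the quasi-isomorphism statement, for any supports and with any twisted coefficients, then follows automatically. The map is clearly continuous and $G$-equivariant, and it lands in $X(L)$ because for $y\in Y(L)$ the isotropy $gG_y g^{-1}$ of $gy$ has maximal torus $gLg^{-1}$. For surjectivity, given $x\in X(L)$ pick $g\in G$ with $gLg^{-1}$ a maximal torus of $G_x$; then $y:=g^{-1}x\in Y(L)$ and $f_1([g,y])=x$. For injectivity, assume $gy=g'y'$ with $y,y'\in Y(L)$ and set $h=(g')^{-1}g$. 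By Lemma~\ref{thm:orbit-YL} applied to $hy=y'\in G\cdot y\cap Y(L)=K\cdot y$, we may write $h=k g_y$ with $k\in K$ and $g_y\in G_y$. Using $g_y\cdot y=y$ and the $K$-equivalence in $G\times_K Y(L)$, one checks $[g,y]=[gg_y^{-1},y]=[g'k,y]=[g',ky]=[g',y']$.

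\textbf{Reduction to a linear claim.} To upgrade this continuous bijection to a homeomorphism I invoke the differentiable slice theorem for the $G$-action on the ambient orbifold $Z$. Fix $y\in Y(L)$ and a $G$-stable tubular neighbourhood $U\cong G\times_{G_y}V$ of $G\cdot y$ in $Z$, with $V$ a linear $G_y$-slice at $y$. Intersecting with $X$ gives $U\cap X\cong G\times_{G_y}W$ where $W=V\cap X$ is a closed $G_y$-invariant subspace of $V$ on which the $G_y$-isotropy has constant rank $b$; in particular $U\cap X\subset X(L)$. The rank count forces each $g^{-1}Lg\subset G_y$ to be a maximal torus of $G_y$, and unpacking this identifies $Y(L)\cap U$ inside $G\times_{G_y}W$ with $K\times_{N_{G_y}(L)}W^L$. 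After the cancellation
\[
G\times_K\bigl(K\times_{N_{G_y}(L)}W^L\bigr) \;=\; G\times_{N_{G_y}(L)}W^L,
\]
the restriction of $f_1$ to the preimage of $U\cap X$ is the natural $G$-equivariant map $G\times_{N_{G_y}(L)}W^L\to G\times_{G_y}W$. By $G$-equivariance it suffices to show that
\[
\phi\colon G_y\times_{N_{G_y}(L)}W^L\longrightarrow W,\qquad [h,w]\mapsto hw,
\]
is a homeomorphism.

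\textbf{The linear claim; main obstacle.} Bijectivity of $\phi$ is the $G_y$-analogue of Lemma~\ref{thm:orbit-YL}: the constant-rank hypothesis forces every $G_y$-orbit in $W$ to meet $W^L$, and two points of $W^L$ in one $G_y$-orbit already lie in one $N_{G_y}(L)$-orbit. For the homeomorphism property, fix a $G_y$-invariant inner product on $V$. Since the $G_y$-isotropies on $V$ are constant along rays through $0$, the constant-rank locus of $V$ is $G_y$-radially invariant, and after a careful choice of slice $W$ inherits this radial invariance. On the compact $G_y$-invariant unit sphere $S\subset V$, the induced map $G_y\times_{N_{G_y}(L)}(S\cap W^L)\to S\cap W$ is a continuous bijection from a compact to a Hausdorff space, hence a homeomorphism, and $\phi$ follows by radial extension. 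The main obstacle is exactly this step: the isotropy stratification of the linear slice is intrinsically a $G_y$-cone, but the identification $U\cap X\cong G\times_{G_y}W$ provided by the slice theorem is built from the exponential map and need not preserve the linear structure of $V$, so arranging the tube so that $X$ appears as a genuine $G_y$-cone inside $V$ is the delicate point of the argument.
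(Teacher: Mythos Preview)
Your approach has a genuine gap: the map~\(f_{1}\) is \emph{not} a homeomorphism in general, so the strategy of proving it is one cannot succeed.

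The error is in the injectivity step. You write \([g,y]=[g g_{y}^{-1},y]\), which would follow from \([g,y]=[g g_{y}^{-1},g_{y}y]\) together with \(g_{y}y=y\). But the equivalence relation in \(G\times_{K}Y(L)\) only allows you to move elements of \(K=N_{G}(L)\) across the comma, and your \(g_{y}\) lies in \(G_{y}\), not in \(K\). Since \(L\) is merely a maximal torus of \(G_{y}\), a generic element of \(G_{y}\) does not normalize \(L\); indeed \(G_{y}\cap K=N_{G_{y}}(L)\), which is strictly smaller than \(G_{y}\) whenever \(G_{y}\) is nonabelian. The same problem reappears in your ``linear claim'': over the origin \(0\in W\) the fibre of \(\phi\) is \(G_{y}/N_{G_{y}}(L)\), which for \(G_{y}=SU(2)\) is already \(\mathbb{RP}^{2}\), not a point.

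A correct computation of the fibre of \(f_{1}\) over \(x\in Y(L)\) gives \(G_{x}/N_{G_{x}}(L)\): every point of the fibre can be written as \([h,x]\) with \(h\in G_{x}\), and two such representatives \([h,x]\), \([h',x]\) agree precisely when \(h^{-1}h'\in K\cap G_{x}=N_{G_{x}}(L)\). The paper therefore does not attempt to prove that \(f_{1}\) is a homeomorphism. Instead it shows that \(f_{1}\) is closed (because \(G\) is compact) and surjective, and that each fibre \(G_{x}/N_{G_{x}}(L)\) is \(\R\)-acyclic (this is the content of \cite[Lemma~3.2]{GoertschesRollenske:2011}, using that \(L\) is a maximal torus of \(G_{x}\)). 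The conclusion then follows from the Vietoris--Begle theorem, extended to the relative, compactly supported, and twisted-coefficient settings by standard arguments.
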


For compact~\(X\) this is again contained
in the proof of~\cite[Prop.~4.2]{GoertschesRollenske:2011},
and also in the proof of~\cite[Thm.~1.2]{Baird:2013}.

\begin{proof}
  We are going to use the Vietoris--Begle theorem, see~\cite[Thm.~6.9.15]{Spanier:1966}
  for a precise statement for Alexander--Spanier cohomology with closed supports.
  We extend it to relative cohomology by the five lemma
  and then to cohomology with compact supports by~\eqref{eq:Hc-tautness}.
  The case of twisted coefficients follows by looking at the eigenspaces
  of the deck transformation~\(\tau\).

  Because \(G\) is compact, the action map~\(G\times X(L)\to X(L)\) is closed, hence so is \(f_{1}\).
  Surjectivity is clear by construction.
  To apply the Vietoris--Begle theorem, it is therefore enough to show
  that the fibres of~\(f_{1}\) are acyclic.
  By \(G\)-equivariance it suffices to study the fibre over some~\(x\in Y(L)\).

  Assume \(f_{1}([g,y])=x\), so that \(gy=x=hy\) for some~\(h\in K\) by Lemma~\ref{thm:orbit-YL}.
  Then \([g,y]=[gh^{-1},hy]=[gh^{-1},x]\) and \(gh^{-1}\in G_{x}\).
  Hence
  \begin{equation}
    f_{1}^{-1}(x)\cong G_{x}/(K\cap G_{x})=G_{x}/N_{G_{x}}(L),
  \end{equation}
  which is acyclic by~\cite[Lemma~3.2]{GoertschesRollenske:2011}
  given that \(L\) is a maximal torus of~\(G_{x}\).
\end{proof}

\begin{proposition}
  \label{thm:incl-Y-X}
  The inclusion~\(Y\hookrightarrow X\) induces an isomorphism
  of \(\RG\)-modules
  \begin{equation*}
    \HG^{*}(X) = \HT^{*}(Y)^{W}
  \end{equation*}
  (for equivariant cohomology with closed or compact supports
  and possibly twisted coefficients).
\end{proposition}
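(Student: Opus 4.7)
The plan is to decompose $X$ along the open and closed subsets $X(L)$ given by Lemma~\ref{thm:properties-XL} and, for each piece, identify both sides of the claim with $\HT^*(Y(L))^{W_K}$, where $K=N_G(L)$ and $W_K=N_K(T)/T\subset W$ is the Weyl group of~$K$. Indeed, by Lemma~\ref{thm:properties-XL}, $X$ is the disjoint union of the subsets $X(L)$ as $L$ ranges over a set of representatives for the $G$-conjugacy classes of rank-$b$ subtori of~$T$, and correspondingly $Y=\bigsqcup(Y\cap X(L))$. Both sides of the claimed identity split as direct sums along these decompositions, so it suffices to prove for each representative~$L$ that the restriction map induces an isomorphism
\[
  \HG^*(X(L)) \xrightarrow{\;\sim\;} \HT^*(Y\cap X(L))^W.
\]

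Fix~$L$ and put $K=N_G(L)$, so that $T\subset K$ is a maximal torus of~$K$ with Weyl group~$W_K$. For the left-hand side, Lemma~\ref{thm:G-K-YL} gives $\HG^*(X(L))=\HG^*(G\times_K Y(L))$, and the induction isomorphism for the closed subgroup $K\subset G$ (a mild generalization of Lemma~\ref{thm:iso-HGX-HTY}, provable by the same Duflo--Vergne argument) yields $\HG^*(G\times_K Y(L))=\HK^*(Y(L))$. Applying part~\eqref{thm:HGc-res-ind-1} of Proposition~\ref{thm:HGc-res-ind} to the compact group~$K$ then produces the identification $\HG^*(X(L))=\HT^*(Y(L))^{W_K}$. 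For the right-hand side, Lemma~\ref{thm:NGT-NKT-YL-Y} identifies $Y\cap X(L)$ with $N_G(T)\times_{N_K(T)}Y(L)$, so as a $T$-space it is a disjoint union of $|W/W_K|$ translates of~$Y(L)$ permuted transitively by~$W$ with stabilizer~$W_K$. Hence $\HT^*(Y\cap X(L))\cong\bigoplus_{w\in W/W_K}w\cdot\HT^*(Y(L))$ as $W$-modules, and taking $W$-invariants collapses this sum to $\HT^*(Y(L))^{W_K}$, matching the previous expression.

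To close, I would verify that these two chains of identifications are compatible with the inclusion $Y\cap X(L)\hookrightarrow X(L)$: the inclusion $N_G(T)\times_{N_K(T)}Y(L)\hookrightarrow G\times_K Y(L)$ covers it under Lemmas~\ref{thm:G-K-YL} and~\ref{thm:NGT-NKT-YL-Y}, and on $W$-invariants both chains of identifications restrict to the same summand~$\HT^*(Y(L))^{W_K}$ indexed by~$L$ itself. The main obstacle is the induction isomorphism for the inclusion $K\subset G$, which is not directly stated but follows either by a straightforward extension of the argument behind Lemma~\ref{thm:iso-HGX-HTY}, or by combining induction from~$T$ via the chain $T\subset K\subset G$ with Weyl-group invariants on both sides; the rest is routine bookkeeping with Weyl-group orbits.
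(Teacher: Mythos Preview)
Your argument is correct and rests on the same lemmas, but the paper arranges them differently. After reducing to~\(X=X(L)\), rather than computing each side separately as~\(\HT^*(Y(L))^{W_K}\) and then matching, the paper assembles Lemmas~\ref{thm:G-K-YL} and~\ref{thm:NGT-NKT-YL-Y} together with the acyclicity of~\(K/N_K(T)\) into a single commutative square with corners \(G\times_{N_K(T)}Y(L)\), \(G\times_K Y(L)\), \(G\times_{N_G(T)}Y\) and~\(X\). Three of the four edges are quasi-isomorphisms (or a homeomorphism), so the remaining edge \(f\colon G\times_{N_G(T)}Y\to X\) is a \(G\)-equivariant quasi-isomorphism; this gives directly \(\HG^*(X)=H_{N_G(T)}^*(Y)=\HT^*(Y)^W\), the last step using only that \(N_G(T)/T=W\) is finite. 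This organization buys two things: compatibility with the inclusion \(Y\hookrightarrow X\) is automatic (no bookkeeping at the end), and one never needs induction from~\(K\) or Proposition~\ref{thm:HGc-res-ind} applied to~\(K\). The latter is a small but genuine gap in your write-up: Proposition~\ref{thm:HGc-res-ind} is stated for \emph{connected} groups, and \(K=N_G(L)\) is typically disconnected (already for~\(L=T\)). The extension to disconnected~\(K\) does hold, but it requires a short extra argument you have not supplied.
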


\begin{proof}
  By Lemma~\ref{thm:properties-XL}
  we may assume \(X=X(L)\) for some subtorus~\(L\subset T\) of rank~\(b\).
  
  Consider the commutative diagram
  \begin{equation}
    \begin{tikzcd}
      G\times_{N_{K}(T)}Y(L) \arrow{d}[left]{f_{3}} \arrow{r}{f_{2}} & G\times_{K}Y(L) \arrow{d}{f_{1}} \\
      G\times_{N_{G}(T)}Y \arrow{r}{f} & X \mathrlap{,}
    \end{tikzcd}
  \end{equation}
  where
  \(f_{2}\) comes from the inclusion~\(N_{K}(T)\hookrightarrow K\), and
  \(f_{3}\) is induced from the \(N_{G}(T)\)-equivariant map~\(q\) defined in Lemma~\ref{thm:NGT-NKT-YL-Y},
  \begin{equation}
    f_{3}\colon G\times_{N_{K}(T)} Y(L) = G\times_{N_{G}(T)}\Bigl( N_{G}(T) \times_{N_{K}(T)} Y(L) \Bigr)
    \stackrel{\!(\operatorname{id},q)}\longrightarrow G\times_{N_{G}(T)} Y.
  \end{equation}
  Like \(q\), the map~\(f_{3}\) is a homeomorphism.
  
  Again by~\cite[Lemma~3.2]{GoertschesRollenske:2011},
  the fibre~\(K/N_{K}(T)\) of the bundle map~\(f_{2}\)
  is acyclic,
  so that \(f_{2}\) is a quasi-isomorphism.
  We finally know from Lemma~\ref{thm:G-K-YL} that \(f_{1}\) is a quasi-isomorphism, too.
  Hence \(f\) is a \(G\)-equivariant quasi-isomorphism. It follows that
  \begin{equation}
    \HG^{*}(X) = \HG^{*}(G\times_{N_{G}(T)}Y) = H_{N_{G}(T)}^{*}(Y) = \HT^{*}(Y)^{W}.
    \qedhere
  \end{equation}
\end{proof}

\section{Cohen--Macaulay filtrations}
\label{sec:filtration}

The results in this section hold for (co)homology with either pair of supports
and{\slash}or with twisted coefficients.
For simplicity we only state them for cohomology with closed supports%
{\slash}homology with compact supports and constant coefficients.

\subsection{Definition and first properties}

Let \(X\) be a \(G\)-space, and let \(\FF\) be a filtration
\begin{equation}
  \emptyset=X_{-1}\subset X_{0}\subset\dots\subset X_{r} = X
\end{equation}
of~\(X\) by \(G\)-stable closed subsets.
We call \(\FF\) \emph{Cohen--Macaulay} if 
\(\hHG_{*}(X_{i},X_{i-1})\) is zero or Cohen--Macaulay of dimension~\(r-i\)
for~\(0\le i\le r\).
Because of the following observation, we could substitute
equivariant cohomology for equivariant homology in the definition.

\begin{lemma}
  \label{thm:HG-hHG-CM}
  Let \((X,Y)\) be a \(G\)-pair and \(d\in\N\). Then \(\hHG_{*}(X,Y)\)
  is Cohen--Macaulay of dimension~\(d\) if and only
  if \(\HG^{*}(X,Y)\) is so.
\end{lemma}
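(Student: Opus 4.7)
The plan is to read off both directions from the two universal coefficient spectral sequences provided by Proposition~\ref{thm:UCT}, together with the $\Ext$-based characterization of Cohen--Macaulay modules in Proposition~\ref{thm:prop-CM}. Recall that a finitely generated graded $\RG$-module~$M$ is Cohen--Macaulay of dimension~$d$ precisely when $\Ext_{\RG}^{i}(M,\RG)$ is concentrated in the single degree $i=r-d$, and in that case $\Ext_{\RG}^{r-d}(M,\RG)$ is itself Cohen--Macaulay of the same dimension.

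For the forward direction, assume $\HG^{*}(X,Y)$ is Cohen--Macaulay of dimension~$d$. By Proposition~\ref{thm:prop-CM}(1), $\Ext_{\RG}^{i}(\HG^{*}(X,Y),\RG)$ vanishes for $i\neq r-d$, so the first universal coefficient spectral sequence of Proposition~\ref{thm:UCT} has only one nonzero column on its $E_{2}$ page. It therefore collapses, yielding an isomorphism of $\RG$-modules (up to an overall degree shift)
\begin{equation*}
  \hHG_{*}(X,Y) \cong \Ext_{\RG}^{r-d}(\HG^{*}(X,Y),\RG).
\end{equation*}
The right-hand side is Cohen--Macaulay of dimension~$d$ by Proposition~\ref{thm:prop-CM}(2), and since a degree shift does not affect dimension or depth, the same holds for~$\hHG_{*}(X,Y)$.

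For the converse, assume $\hHG_{*}(X,Y)$ is Cohen--Macaulay of dimension~$d$. Since $\hHG_{*}(X,Y)$ is then in particular finitely generated, the spectral sequence of Proposition~\ref{thm:hHGc-serre} forces $H_{*}(X,Y)$ to be finite-dimensional, so the hypothesis of the second spectral sequence in Proposition~\ref{thm:UCT} is met. Applying Proposition~\ref{thm:prop-CM}(1) to $\hHG_{*}(X,Y)$ shows that $\Ext_{\RG}^{i}(\hHG_{*}(X,Y),\RG)$ is concentrated in degree $i=r-d$, so this second spectral sequence also collapses and produces an isomorphism
\begin{equation*}
  \HG^{*}(X,Y) \cong \Ext_{\RG}^{r-d}(\hHG_{*}(X,Y),\RG),
\end{equation*}
again up to a degree shift, exhibiting $\HG^{*}(X,Y)$ as Cohen--Macaulay of dimension~$d$ by Proposition~\ref{thm:prop-CM}(2).

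I do not foresee any real obstacle. The only mild subtlety is the bookkeeping of degree shifts from the spectral sequences, but the Cohen--Macaulay property is invariant under such shifts, so this does not affect the conclusion. One should also briefly handle the degenerate case where one side vanishes: if $\HG^{*}(X,Y)=0$ then all $\Ext$'s vanish and the first spectral sequence forces $\hHG_{*}(X,Y)=0$, while if $\hHG_{*}(X,Y)=0$ then by the spectral sequence of Proposition~\ref{thm:hHGc-serre} we get $H_{*}(X,Y)=0$, hence $H^{*}(X,Y)=0$, and thus (by Remark~\ref{rem:HG-fg} or Lemma~\ref{thm:Car-serre}) $\HG^{*}(X,Y)=0$.
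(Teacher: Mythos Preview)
Your proof is correct and follows essentially the same route as the paper: use Proposition~\ref{thm:prop-CM} to see that the $\Ext$ groups are concentrated in a single degree, collapse the relevant universal coefficient spectral sequence from Proposition~\ref{thm:UCT}, and apply Proposition~\ref{thm:prop-CM}(2) to the surviving $\Ext$. One small redundancy: your check that $H_{*}(X,Y)$ is finite-dimensional is unnecessary, since this is the standing Assumption~\ref{ass:finite-hom} in force whenever equivariant homology is discussed (and indeed Proposition~\ref{thm:hHGc-serre} itself already presupposes it); similarly, the paper disposes of the zero case symmetrically via the two UCT spectral sequences rather than detouring through the Serre spectral sequence.
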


\begin{proof}
  It follows from the universal coefficient theorem (Proposition~\ref{thm:UCT})
  that \(\hHG_{*}(X,Y)\) is zero if and only if \(\HG^{*}(X,Y)\)
  is so. Moreover, if \(\HG^{*}(X,Y)\) is Cohen--Macaulay
  of dimension~\(d\), then the spectral sequence converging
  to~\(\hHG_{*}(X,Y)\) collapses at the \(E_{2}\)~page
  by Proposition~\ref{thm:prop-CM}, and
  \begin{equation}
    \hHG_{*}(X,Y) = \Ext_{\RG}^{r-d}(\HG^{*}(X,Y),\RG)[r-d]
  \end{equation}
  is again Cohen--Macaulay of dimension~\(d\).
  The other direction is analogous.
\end{proof}

We will see in Section~\ref{orbit-filtration} that
Cohen--Macaulay filtrations exist. For the moment,
we record several properties.

\begin{proposition}
  \label{thm:seq-hHG-Xi-Xi1}
  Let \((X_{i})\) be a Cohen--Macaulay filtration of~\(X\).
  For any~\(0\le i\le r\) there is a short exact sequence
  \begin{equation*}
    0\to \hHG_{*}(X_{i})\to \hHG_{*}(X)\to \hHG_{*}(X,X_{i})\to 0.
  \end{equation*}
\end{proposition}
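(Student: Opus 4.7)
The plan is to deduce the proposition from the long exact sequence of the pair $(X, X_i)$: the short exact sequence claim is equivalent to the vanishing of the connecting morphism $\partial\colon\hHG_{*}(X, X_i)\to \hHG_{*+1}(X_i)$. The commutative-algebra engine is the unmixedness of Cohen--Macaulay modules (Proposition~\ref{thm:prop-CM}): every associated prime of a CM module of dimension $d$ has dimension exactly $d$, so every nonzero submodule of such a module has Krull dimension $d$. Combined with $\operatorname{Ass}(N)\subset\operatorname{Ass}(M)$ for $N\subset M$ and with the inclusion of $\operatorname{Ass}(M)$ in the union of associated primes of the subquotients of any finite filtration, this yields the following principle: if $M$ admits a finite $\RG$-filtration whose subquotients are zero or CM of dimensions $\ge d$, then $M$ has no nonzero submodule of dimension $<d$.

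I would first establish the auxiliary claim that, for every $-1\le i\le j\le r$, the module $\hHG_{*}(X_j,X_i)$ admits a finite $\RG$-filtration whose successive subquotients are the modules $\hHG_{*}(X_k, X_{k-1})$ for $i<k\le j$. This is proved by downward induction on $i$ with $j$ fixed; the base case $i=j$ is trivial. For the inductive step from $(i+1,j)$ to $(i,j)$, consider the long exact sequence of the triple $(X_j,X_{i+1},X_i)$, which contains the connecting morphism
\[
 \delta\colon \hHG_{*}(X_j,X_{i+1})\longrightarrow \hHG_{*+1}(X_{i+1},X_i).
\]
The inductive filtration of $\hHG_{*}(X_j,X_{i+1})$ has subquotients of dimensions at most $r-i-2$, so the image of $\delta$, being a quotient of the domain, has dimension $\le r-i-2$. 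But it is also a submodule of $\hHG_{*+1}(X_{i+1},X_i)$, which is zero or CM of dimension $r-i-1$; by the principle above it must vanish. The triple LES then collapses to a short exact sequence
\[
 0\longrightarrow \hHG_{*}(X_{i+1},X_i)\longrightarrow \hHG_{*}(X_j,X_i)\longrightarrow \hHG_{*}(X_j,X_{i+1})\longrightarrow 0,
\]
and splicing the inductive filtration of the quotient above $\hHG_{*}(X_{i+1},X_i)$ produces the required filtration of $\hHG_{*}(X_j,X_i)$.

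Given the auxiliary claim, the proposition follows by one further application of the same dimension counting. Applied to $\hHG_{*}(X, X_i)=\hHG_{*}(X_r, X_i)$, the claim shows that this module has Krull dimension $\le r-i-1$, hence so does the image of $\partial$. Applied to $\hHG_{*}(X_i)=\hHG_{*}(X_i, X_{-1})$, the claim shows that $\hHG_{*}(X_i)$ admits a filtration with subquotients CM of dimensions in $\{r-i,\ldots,r\}$, so by the principle above it has no nonzero submodule of dimension $<r-i$. Thus $\partial=0$, yielding the desired short exact sequence. The main subtlety is organizing the induction so as to avoid circularity: the auxiliary claim must be proved without any reference to the short exact sequences it is subsequently used to produce, which is why the induction is run on the relative pairs $(X_j, X_i)$ rather than on the absolute modules $\hHG_{*}(X_j)$.
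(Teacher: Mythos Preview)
Your argument is correct and is essentially the dimension-counting approach underlying \cite[Prop.~4.3]{AlldayFranzPuppe:orbits4}: the connecting maps are forced to vanish because their images are simultaneously quotients of modules of small Krull dimension and submodules of modules whose associated primes all have larger dimension. One minor remark: unmixedness of Cohen--Macaulay modules is not literally the statement of Proposition~\ref{thm:prop-CM}, but it follows immediately from $\operatorname{depth} M=\dim M$ together with the standard inequality $\operatorname{depth} M\le\dim R/\mathfrak p$ for every $\mathfrak p\in\operatorname{Ass}(M)$.
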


\begin{proof}
  As in~\cite[Prop.~4.3]{AlldayFranzPuppe:orbits4}.
\end{proof}

\begin{proposition}
  Let \((X_{i})\) be a Cohen--Macaulay filtration of~\(X\).
  The associated spectral sequence
  converging to~\(\hHG_{*}(X)\) degenerates at~\(E^{1}_{i}=\hHG_{*+i}(X_{i},X_{i-1})\).
\end{proposition}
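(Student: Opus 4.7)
The plan is to identify the filtration on $\hHG_*(X)$ induced by $\FF$, to which the spectral sequence abuts, with an explicit filtration whose associated graded coincides with~$E^1$. Since the spectral sequence converges to the associated graded and the filtration has finite length (so no convergence issue arises), this will force $E^\infty = E^1$, i.e.\ all differentials $d^r$ with $r\ge1$ vanish.

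The key observation is that Proposition~\ref{thm:seq-hHG-Xi-Xi1} applies not only to $\FF$ itself but also to each of its truncations. For any~$0\le i\le r$, the truncated filtration $\emptyset=X_{-1}\subset X_0\subset\dots\subset X_i$, extended by repeating~$X_i$ in the remaining positions, is again a Cohen--Macaulay filtration, this time of~$X_i$: the additional subquotients $\hHG_*(X_i,X_i)=0$ are allowed by the definition, and the original subquotients satisfy the required dimension condition unchanged. Applying Proposition~\ref{thm:seq-hHG-Xi-Xi1} to this sub-filtration, with $X_i$ in the role of the ambient space, yields for every~$0\le i\le r$ a short exact sequence
\begin{equation*}
0 \to \hHG_*(X_{i-1}) \to \hHG_*(X_i) \to \hHG_*(X_i,X_{i-1}) \to 0.
\end{equation*}
In particular each transition map $\hHG_*(X_i)\to\hHG_*(X_{i+1})$, and hence every composite $\hHG_*(X_i)\to\hHG_*(X)$, is injective.

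Setting $F_i\hHG_*(X)$ to be the image of $\hHG_*(X_i)\to\hHG_*(X)$, the injectivity just established shows $F_{i-1}\subset F_i$ and identifies $F_i/F_{i-1}$ with $\hHG_*(X_i)/\hHG_*(X_{i-1})\cong\hHG_*(X_i,X_{i-1})=E^1_i$. Since $(F_i)$ is by construction the filtration on the abutment of the filtration spectral sequence, its associated graded is $E^\infty$, so $E^\infty=E^1$ and the spectral sequence degenerates at~$E^1$. The only real subtlety is the legality of applying Proposition~\ref{thm:seq-hHG-Xi-Xi1} to a truncation of~$\FF$; once the trivial padding argument above is accepted, the rest is a formal consequence of the short exact sequences together with the standard identification of the filtration on the abutment.
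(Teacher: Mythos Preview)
Your proof is correct and follows the approach the paper intends: the degeneration is deduced from the short exact sequences of Proposition~\ref{thm:seq-hHG-Xi-Xi1}, which is precisely what the reference to~\cite[Cor.~4.4]{AlldayFranzPuppe:orbits1} encodes. The truncation-and-padding device is valid but not strictly needed, since Proposition~\ref{thm:seq-hHG-Xi-Xi1} applied to~\(X\) itself already gives the injectivity of every~\(\hHG_{*}(X_{i})\to\hHG_{*}(X)\), hence of each~\(\hHG_{*}(X_{i-1})\to\hHG_{*}(X_{i})\), and the long exact sequences of the pairs then split into the short exact sequences you use.
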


\begin{proof}
  As in~\cite[Cor.~4.4]{AlldayFranzPuppe:orbits1}.
\end{proof}

\begin{proposition}
  \label{thm:G-filt-T-filt}
  Let \(\FF\) be a \(G\)-stable filtration of~\(X\).
  Then \(\FF\) is a Cohen--Macaulay filtration for~\(X\), considered as a \(G\)-space,
  if and only if it is Cohen--Macaulay for~\(X\), considered as a \(T\)-space.
\end{proposition}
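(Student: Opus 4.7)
The plan is to reduce the question, through Lemma~\ref{thm:HG-hHG-CM}, to the analogous statement for equivariant \emph{cohomology}: that \(\HG^{*}(X_{i},X_{i-1})\) is zero or Cohen--Macaulay of dimension~\(r-i\) over~\(\RG\) if and only if \(\HT^{*}(X_{i},X_{i-1})\) is zero or Cohen--Macaulay of dimension~\(r-i\) over~\(\RT\). Recall that \(T\) also has rank~\(r\), so the two expected dimensions coincide.

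I would then fix an arbitrary finitely generated \(\RG\)-module~\(M\) (to be specialized to \(M=\HG^{*}(X_{i},X_{i-1})\)) and compare the Ext characterizations of the Cohen--Macaulay property supplied by Proposition~\ref{thm:prop-CM}. By Proposition~\ref{thm:HGc-res-ind}\,\eqref{thm:HGc-res-ind-2} we have a natural isomorphism \(\HT^{*}(X_{i},X_{i-1})\cong\RT\otimes_{\RG}M\). Since \(\RT\) is finitely generated and \emph{free} over~\(\RG\), it is in particular faithfully flat, so a projective resolution of~\(M\) over~\(\RG\) becomes, after tensoring with~\(\RT\), a projective resolution of \(\RT\otimes_{\RG}M\) over~\(\RT\). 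The key identification I would then establish is
\begin{equation*}
  \Ext_{\RT}^{i}(\RT\otimes_{\RG}M,\RT)\;\cong\;\RT\otimes_{\RG}\Ext_{\RG}^{i}(M,\RG),
\end{equation*}
obtained by combining flat base change with the splitting \(\RT\cong\RG^{\oplus N}\) as \(\RG\)-module to move \(\RT\) out of the second argument of~\(\Ext_{\RG}\). Faithful flatness of \(\RT/\RG\) then gives that the left-hand side vanishes if and only if the right-hand side does, \emph{for each~\(i\) separately}, and it also gives that \(M=0\) iff \(\RT\otimes_{\RG}M=0\).

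Applying Proposition~\ref{thm:prop-CM}\,(i) to both \(M\) over \(\RG\) and \(\RT\otimes_{\RG}M\) over~\(\RT\), the pattern of non-vanishing Ext groups is identical, so one side satisfies the Cohen--Macaulay condition of dimension~\(r-i\) if and only if the other does. Combined with Lemma~\ref{thm:HG-hHG-CM}, this yields both directions of the desired equivalence simultaneously, and hence the proposition.

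The main technical point is really just the Ext identification in the display above; once that is in place, everything is immediate from the already-cited algebraic preliminaries. A minor care-point is making sure the zero case is handled (which is automatic from faithful flatness) and that the grading shifts built into Proposition~\ref{thm:prop-CM} match between the \(G\)- and \(T\)-sides, which they do because \(\rank G=\rank T=r\).
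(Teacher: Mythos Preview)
Your proof is correct. The approach is essentially the same as the paper's---both exploit that \(\RT\) is finitely generated and free over~\(\RG\) together with Proposition~\ref{thm:HGc-res-ind}\,\eqref{thm:HGc-res-ind-2}---but the organization differs slightly. The paper works directly with equivariant \emph{homology}, so your detour through Lemma~\ref{thm:HG-hHG-CM} is unnecessary (though harmless). Instead of your explicit \(\Ext\) base-change identity, the paper argues in two steps: first, since \(\hHT_{*}(X_{i},X_{i-1})\) is a finite direct sum of copies of~\(\hHG_{*}(X_{i},X_{i-1})\) as an \(\RG\)-module, one is Cohen--Macaulay of dimension~\(r-i\) over~\(\RG\) iff the other is; second, it cites \cite[Lemma~2.6]{GoertschesRollenske:2011} for the statement that \(\hHT_{*}(X_{i},X_{i-1})\) is Cohen--Macaulay of a given dimension over~\(\RG\) iff it is so over~\(\RT\). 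Your direct \(\Ext\) computation effectively reproves that cited lemma in the case at hand and is thus more self-contained; the paper's version is shorter by outsourcing that step.
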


\begin{proof}
  Write \(\FF=(X_{i})\).
  Proposition~\ref{thm:HGc-res-ind}\,\eqref{thm:HGc-res-ind-2} gives,
  for~\(0\le i\le r\),
  \begin{equation}
    \label{eq:HTXiG}
    \hHT_{*}(X_{i},X_{i-1}) = \RT\otimes_{\RG}\hHG_{*}(X_{i},X_{i-1}),
  \end{equation}
  which as an \(\RG\)-module consists of finitely many copies of~\(\hHG_{*}(X_{i},X_{i-1})\).
  Hence \(\hHG_{*}(X_{i},X_{i-1})\) is zero or Cohen--Macaulay of dimension~\(r-i\) over~\(\RG\)
  if and only if \(\hHT_{*}(X_{i},X_{i-1})\) is so.
  But \(\hHT_{*}(X_{i},X_{i-1})\) is Cohen--Macaulay of dimension~\(r-i\) over~\(\RG\)
  if and only if it is so over~\(\RT\),
  \cf~\cite[Lemma 2.6]{GoertschesRollenske:2011}.
\end{proof}

\subsection{The Atiyah--Bredon sequence}

Let \(\FF=(X_{i})\) be a Cohen--Macaulay filtration of~\(X\).
It gives rise to an \emph{Atiyah--Bredon sequence}
\begin{multline}
  0\longrightarrow \HG^{*}(X) \stackrel{\iota^{*}}\longrightarrow \HG^{*}(X_{0})
  \stackrel{\delta_{0}}\longrightarrow \HG^{*+1}(X_{1},X_{0})\stackrel{\delta_{1}}\longrightarrow \cdots \\
  \stackrel{\delta_{r-2}}\longrightarrow \HG^{*+r-1}(X_{r-1},X_{r-2})
  \stackrel{\delta_{r-1}}\longrightarrow \HG^{*+r}(X_{r},X_{r-1})
  \longrightarrow 0.
\end{multline}
Here \(\iota^{*}\) is induced by the inclusion~\(\iota\colon X_{0}\hookrightarrow X\),
and \(\delta_{i}\) for~\(i\ge0\) is
the connecting homomorphism in the long exact sequences
for the triples~\((X_{i+1},X_{i},X_{i-1})\).
We consider this sequence as a dg~\(\RG\)-module~\(\barAB{G}{*}(\FF)\)
with
\begin{equation}
  \barAB{G}{i}(\FF) = \begin{cases}
    \HG^{*}(X) & \text{if \(i=-1\),} \\
    \HG^{*+i}(X_{i},X_{i-1}) & \text{if \(0\le i\le r\)}
  \end{cases}
\end{equation}
and differentials~\(\delta_{i}\) as above plus~\(\delta_{-1}=\iota^{*}\).
We call \(\barAB{G}{*}(\FF)\) the \emph{augmented Atiyah--Bredon complex}
of~\(\FF\).
The (non-augmented) \emph{Atiyah--Bredon complex}~\(\AB{G}{*}(\FF)\) is obtained
by dropping the leading term~\(\barAB{G}{-1}(\FF)=\HG^{*}(X)\).
It is the \(E_{1}\)~page of the spectral sequence
arising from the filtration~\(\FF\) and converging to~\(\HG^{*}(X)\).

\begin{theorem}
  \label{thm:Ext=HAB}
  The cohomology of the Atiyah--Bredon complex is
  \begin{equation*}
    H^{j}(\AB{G}{*}(\FF)) = \Ext_{\RG}^{j}(\hHG_{*}(X),\RG)
  \end{equation*}
  for~\(j\ge0\).
  Under this isomorphism, the map~\(\HG^{*}(X)\to H^{0}(\AB{G}{*}(\FF))\) corresponds
  to the canonical map~\(\HG^{*}(X)\to\Hom_{\RG}(\hHG_{*}(X),\RG)\).
  In particular, the cohomology of~\(\AB{G}{*}(\FF)\)
  and \(\barAB{G}{*}(\FF)\)
  is independent of the Cohen--Macaulay filtration~\(\FF\).
\end{theorem}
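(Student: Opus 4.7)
The plan is to identify each term of the Atiyah--Bredon complex with a single Ext group via the Cohen--Macaulay hypothesis, and then compute its cohomology by induction driven by the long exact Ext sequences coming from Proposition~\ref{thm:seq-hHG-Xi-Xi1}. First, for each~\(0\le i\le r\), apply the universal coefficient spectral sequence (Proposition~\ref{thm:UCT}) to the pair~\((X_i,X_{i-1})\). Since \(\hHG_*(X_i,X_{i-1})\) is zero or Cohen--Macaulay of dimension~\(r-i\), Proposition~\ref{thm:prop-CM} forces \(\Ext^j_{\RG}(\hHG_*(X_i,X_{i-1}),\RG)\) to vanish unless~\(j=i\); the spectral sequence therefore degenerates at~\(E_2\) and gives a natural isomorphism \(\AB{G}{i}(\FF)=\HG^{*+i}(X_i,X_{i-1})\cong \Ext^i_{\RG}(\hHG_*(X_i,X_{i-1}),\RG)\).

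Second, apply Proposition~\ref{thm:seq-hHG-Xi-Xi1} to the restricted Cohen--Macaulay filtration \((X_0,\dots,X_i)\) of~\(X_i\) to obtain the short exact sequence
\[
  0 \to \hHG_*(X_{i-1}) \to \hHG_*(X_i) \to \hHG_*(X_i,X_{i-1}) \to 0.
\]
Thanks to the vanishing from Step~1 and the inductive hypothesis \(\Ext^j(\hHG_*(X_{i-1}),\RG)=0\) for~\(j\ge i\), its long exact Ext sequence collapses to isomorphisms \(\Ext^j(\hHG_*(X_i),\RG)\cong\Ext^j(\hHG_*(X_{i-1}),\RG)\) for~\(j\le i-2\) together with a single four-term exact sequence
\[
  0 \to \Ext^{i-1}(\hHG_*(X_i),\RG) \to \Ext^{i-1}(\hHG_*(X_{i-1}),\RG) \xrightarrow{\partial} \Ext^i(\hHG_*(X_i,X_{i-1}),\RG) \to \Ext^i(\hHG_*(X_i),\RG) \to 0.
\]
Induction on~\(i\) then yields \(\Ext^j(\hHG_*(X_i),\RG)=0\) for~\(j>i\) and \(\Ext^j(\hHG_*(X_i),\RG)\cong H^j(\AB{G}{*\le i}(\FF))\) for~\(j\le i\), \emph{provided} \(\partial\) is matched, under the isomorphism of Step~1 and the inductive description of its source as a quotient of~\(\AB{G}{i-1}\), with the Atiyah--Bredon differential~\(\delta_{i-1}\).

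The main obstacle is precisely this identification of differentials. The map~\(\delta_{i-1}\) is the connecting homomorphism of the triple \((X_i,X_{i-1},X_{i-2})\), induced by the short exact sequence \(0\to\Omega^*(X_i,X_{i-1})\to\Omega^*(X_i,X_{i-2})\to\Omega^*(X_{i-1},X_{i-2})\to 0\) of differential forms. Dualising and applying the exact functor~\(\CarG^*\) produces a short exact sequence of Cartan chain complexes whose Ext connecting map is identified with~\(\delta_{i-1}\) by naturality of the UCT spectral sequence used in Step~1. Comparing this sequence of pairs with the sequence of Proposition~\ref{thm:seq-hHG-Xi-Xi1} via the vertical quotient maps \(\hHG_*(X_j)\to\hHG_*(X_j,X_{i-2})\), a diagram chase using the inductively built description of \(\Ext^{i-1}(\hHG_*(X_{i-1}),\RG)\) as a quotient of~\(\AB{G}{i-1}\) reconciles this connecting map with~\(\partial\). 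Taking~\(i=r\) finally gives \(H^j(\AB{G}{*}(\FF))\cong\Ext^j_{\RG}(\hHG_*(X),\RG)\). The compatibility of the augmentation with the canonical evaluation \(\HG^*(X)\to\Hom_{\RG}(\hHG_*(X),\RG)\) follows from the edge-map description of the degenerated UCT spectral sequence applied to~\(X\) itself, and independence of~\(\FF\) is then immediate since \(\Ext^*_{\RG}(\hHG_*(X),\RG)\) is intrinsic to~\(X\).
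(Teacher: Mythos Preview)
Your inductive argument via the short exact sequences of Proposition~\ref{thm:seq-hHG-Xi-Xi1} and the degenerate universal coefficient spectral sequence is correct and is precisely the argument of~\cite[Sec.~5]{AlldayFranzPuppe:orbits4} that the paper invokes; the paper itself gives no details beyond asserting that the torus-case proofs of~\cite{AlldayFranzPuppe:orbits1} and~\cite{AlldayFranzPuppe:orbits4} carry over verbatim. Your Step~3 is indeed the crux, and your outline is right: the quasi-isomorphism of Lemma~\ref{thm:quiso-Car-Hom} identifies the cohomological connecting map~$\delta_{i-1}$ with the connecting map of~$\Hom_{\RG}(-,\RG)$ applied to the short exact sequence of free $\RG$-complexes~$\hCG_{*}$ coming from the triple, and the map of short exact sequences $\hHG_{*}(X_{j})\to\hHG_{*}(X_{j},X_{i-2})$ then matches this with~$\partial$.
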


\begin{proof}
  This was proven in~\cite[Thm.~5.1]{AlldayFranzPuppe:orbits1}
  for the orbit filtration of a \(T\)-space;  
  see~\cite[Sec.~5]{AlldayFranzPuppe:orbits4} for an alternative
  argument.
  Both proofs generalize to the present context.
  The independence of the Cohen--Macaulay filtration~\(\FF\)
  was already pointed out in~\cite[Rem.~4.9]{AlldayFranzPuppe:orbits1}.
\end{proof}

\subsection{The orbit filtration}
\label{orbit-filtration}

Let \(X\) be a \(G\)-space such that \(H^{*}(X)\) is finite-di\-men\-sional
and such that only finitely many infinitesimal orbit types occur in~\(X\).
We will see in Section~\ref{sec:finite} that
the second assumption is redundant if \(X\) is a manifold or locally orientable orbifold.

\begin{remark}
  \label{rem:finite-G-T}
  We note that \(X\) has finitely many infinitesimal \(G\)-orbit types
  if and only if it has finitely many infinitesimal \(T\)-orbit types.
  That each infinitesimal \(G\)-orbit type restricts to only
  finitely many infinitesimal \(T\)-orbit types can be seen as follows,
  \cf~\cite[Prop.~VIII.3.14]{Borel:1960}:
  Let \(K\subset G\) be an isotropy group occurring in~\(X\).
  Then \(G/K\) is a compact differentiable \(T\)-manifold, hence has only
  finitely many \(T\)-orbit types. Moreover,
  the infinitesimal \(T\)-orbit types in~\(G/K\) depend
  only on the infinitesimal \(G\)-orbit type determined by~\(K\).

  The converse is due to Mostow, 
  \cf~\cite[Thm.~VII.3.1]{Borel:1960}. Note that the proof given there
  only uses that (in the notation of~\cite[Thm.~VII.2.1]{Borel:1960})
  there are finitely many groups~$(S\cap T)^{0}$ for~$S\in\mathcal{S}$,
  see~\cite[p.~97]{Borel:1960}.
\end{remark}

The \emph{\(G\)-orbit filtration}~\((X_{i,G})\) of~\(X\) is defined by
\begin{equation}
  X_{i,G} = \bigl\{\, x\in X \bigm| \rank G_{x} \ge r-i \,\bigr\}
\end{equation}
for~\(-1\le i\le r\).
Each~\(X_{i,G}\) is \(G\)-stable and, by the slice theorem, closed in~\(X\).
If \(G=T\) is a torus, then the strata \(X_{i,T}\setminus X_{i-1,T}\)
are disjoint unions of fixed point sets of subtori, hence smooth if \(X\) is so.
This may fail for non-commutative~\(G\),
see~\cite[Rem.~3.1]{GoertschesRollenske:2011} for an example.

We need a relative version of Proposition~\ref{thm:incl-Y-X}.

\begin{proposition}
  \label{thm:incl-Y-X-rel}
  Let \(0\le i\le r\).
  The inclusion of pairs~\((X_{i,T},X_{i,T}\cap X_{i-1,G})\hookrightarrow(X_{i,G},X_{i-1,G})\)
  induces an isomorphism of \(\RG\)-modules
  \begin{equation*}
    \HG^{*}(X_{i,G},X_{i-1,G}) = \HT^{*}(X_{i,T},X_{i,T}\cap X_{i-1,G})^{W}.
  \end{equation*}
\end{proposition}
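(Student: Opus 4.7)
The plan is to mimic the proof of Proposition~\ref{thm:incl-Y-X}, but in the pair setting. I would consider the pair map
\[
\bar f\colon(\hat X,\hat Y)\to(X_{i,G},X_{i-1,G}),
\]
where $\hat X=G\times_{N_G(T)}X_{i,T}$, $\hat Y=G\times_{N_G(T)}(X_{i,T}\cap X_{i-1,G})$, and $\bar f$ is induced by the $G$-action $[g,y]\mapsto gy$. Both $X_{i,T}$ and $X_{i,T}\cap X_{i-1,G}$ are $N_G(T)$-stable, so these quotients are well defined. Using $G$-stability of $X_{i-1,G}$ one verifies $\bar f^{-1}(X_{i-1,G})=\hat Y$; compactness of $G$ makes $\bar f$ closed; and surjectivity follows as in Lemma~\ref{thm:NGT-NKT-YL-Y}, since any $x\in X_{i,G}$ has $G_x$ containing a torus of rank $\ge r-i$ that is $G$-conjugate into $T$.

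The crucial observation is that the open complement $X_{i,G}\setminus X_{i-1,G}$ has all $G$-isotropy groups of rank \emph{exactly} $r-i$, and $X_{i,T}\setminus X_{i-1,G}$ is precisely the highest $T$-rank stratum there (since $\rank T_x\le\rank G_x=r-i$ forces equality once $\rank T_x\ge r-i$). Applying Proposition~\ref{thm:incl-Y-X} and its underlying Lemma~\ref{thm:G-K-YL} to this constant-rank $G$-space, I conclude that the restricted map
\[
\bar f|\colon G\times_{N_G(T)}(X_{i,T}\setminus X_{i-1,G})\to X_{i,G}\setminus X_{i-1,G}
\]
is a quasi-isomorphism with acyclic point-preimages. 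Combined with $\bar f^{-1}(X_{i-1,G})=\hat Y$, this means the pair map $\bar f$ has relatively acyclic pair-fibres everywhere: honestly acyclic over $x\notin X_{i-1,G}$, and of the form $(\bar f^{-1}(x),\bar f^{-1}(x))$ over $x\in X_{i-1,G}$, which has vanishing relative cohomology trivially. From this I would deduce that $\bar f$ induces an isomorphism on equivariant pair cohomology, $\HG^*(X_{i,G},X_{i-1,G})\cong\HG^*(\hat X,\hat Y)$. The remaining identification
\[
\HG^*(\hat X,\hat Y)=H_{N_G(T)}^*(X_{i,T},X_{i,T}\cap X_{i-1,G})=\HT^*(X_{i,T},X_{i,T}\cap X_{i-1,G})^W
\]
then follows as in the proof of Proposition~\ref{thm:incl-Y-X}, from the induction principle for $N_G(T)\subset G$ (a pair version of Lemma~\ref{thm:iso-HGX-HTY}) and passing to $W$-invariants via the finite cover $T\to N_G(T)$, and the composite is compatible with the stated inclusion of pairs through $X_{i,T}\hookrightarrow\hat X$, $y\mapsto[1,y]$.

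The hard part will be justifying the pair Vietoris--Begle step for cohomology with closed supports. For compact supports, \eqref{eq:HGc-rel-compl} immediately reduces both pair cohomologies to the cohomology of the open complements, and the constant-rank quasi-isomorphism finishes the job. For closed supports, no such reduction is available and the argument becomes sheaf-theoretic: one must verify that the higher direct images under $\bar f$ of the relative sheaf associated with $(\hat X,\hat Y)$ vanish. Their stalks are the relative cohomology of the pair-fibres, which vanish by the analysis above, but care is required because the absolute fibres of $\bar f$ over $X_{i-1,G}$ need not themselves be acyclic (once $\rank G_x>r-i$, the set of maximal tori of $G$ meeting $G_x$ in a rank-$\ge(r-i)$ subtorus can be topologically complicated), so the argument must genuinely use the relative vanishing rather than an absolute five-lemma reduction.
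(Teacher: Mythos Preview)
Your compact-supports argument matches the paper's exactly. For closed supports, however, the paper takes a route that sidesteps the relative Vietoris--Begle difficulty you flag at the end. Rather than working with the pair map~\(\bar f\) directly, the paper uses tautness of Alexander--Spanier cohomology to write
\[
\HG^{*}(X_{i,G},X_{i-1,G}) = \dirlim \HG^{*}(X_{i,G},U)
\]
over \(G\)-stable open neighbourhoods~\(U\) of~\(X_{i-1,G}\), and then applies excision to pass to \(\dirlim \HG^{*}(X_{i,G}\setminus X_{i-1,G},\,U\setminus X_{i-1,G})\). Now both members of each pair live entirely inside the open constant-rank stratum \(X_{i,G}\setminus X_{i-1,G}\), where Proposition~\ref{thm:incl-Y-X} (extended to pairs~\((X,U)\) with \(U\) open via the five lemma) applies directly. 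Running the same tautness-and-excision steps in reverse on the \(T\)-side yields the stated isomorphism.

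This completely avoids the obstacle you correctly identify: the absolute fibres of~\(\bar f\) over points of~\(X_{i-1,G}\) need not be acyclic, so an absolute five-lemma argument fails and your approach would require a genuinely relative sheaf-theoretic Vietoris--Begle statement. Your pair-fibre analysis is correct and such a statement could presumably be made rigorous, but you leave it as ``the hard part'' without carrying it out. The tautness-plus-excision manoeuvre makes it unnecessary: tautness trades the closed second member (over which the fibres may be bad) for open neighbourhoods, and excision then removes the bad locus altogether, reducing everything to the constant-rank case where only the absolute statement is needed.
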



\begin{proof}
  For cohomology with compact supports, the claim
  follows immediately from Proposition~\ref{thm:incl-Y-X}
  and the natural isomorphism~\eqref{eq:Hc-rel-compl}.

  To see that it holds for cohomology with closed supports,
  we observe first that Proposition~\ref{thm:incl-Y-X} generalizes by the five lemma
  to pairs~\((X,U)\) where \(U\subset X\) is open (and all isotropy groups
  in~\(X\) have rank~\(b\)).
  Back to the more general case we are considering,
  we have by the tautness of Alexander--Spanier cohomology
  \begin{align}
    \HG^{*}(X_{i,G},X_{i-1,G}) &= \dirlim \HG^{*}(X_{i,G},U) \\
    \shortintertext{where the direct limit is taken over all \(G\)-stable open neighbourhoods of~\(X_{i-1,G}\) in~\(X_{i,G}\).
    By excision and Proposition~\ref{thm:incl-Y-X} for the pair~\((X_{i,G},U)\),}
    &= \dirlim \HG^{*}(X_{i,G}\setminus X_{i-1,G},U\setminus X_{i-1,G}) \\
    &= \dirlim \bigl(\HT^{*}(X_{i,T}\setminus X_{i-1,G},V\setminus X_{i-1,G})^{W}\bigr), \\
    \shortintertext{this time the direct limit being over all \(N_{G}(T)\)-stable open neighbourhoods~\(V\) of~\(X_{i,T}\cap X_{i-1,G}\) in~\(X_{i,T}\).
    Interchanging the direct limit and taking \(W\)-invariants and
    reversing the previous arguments, we finally arrive at}
    &= \HT^{*}(X_{i,T},X_{i,T}\cap X_{i-1,G})^{W}.
    \qedhere
  \end{align}  
\end{proof}

\begin{lemma}
  \label{thm:HXiXi1-fg}
  The vector spaces~\(H^{*}(X_{i,T},X_{i,T}\cap X_{i-1,G})\)
  and \(H^{*}(X_{i,G},X_{i-1,G})\) are finite-dimensional for~\(0\le i\le r\).
\end{lemma}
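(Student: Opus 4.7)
The plan is to observe that the two statements are equivalent and then prove one of them by reducing, via the long exact sequence of the pair, to the finite-dimensionality of two auxiliary cohomology groups that can each be computed by a finite closed cover and iterated Mayer--Vietoris.

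For the equivalence, Proposition~\ref{thm:incl-Y-X-rel} gives \(\HG^{*}(X_{i,G},X_{i-1,G}) = \HT^{*}(X_{i,T},X_{i,T}\cap X_{i-1,G})^{W}\). Since \(\RT\) is a finitely generated free \(\RG\)-module, finite generation of an \(\RT\)-module over~\(\RT\) is the same as over~\(\RG\), and it is preserved by the \(\RG\)-linear averaging projection onto the \(W\)-invariant subspace. Combined with Remark~\ref{rem:HG-fg}, applied separately to~\(T\) and to~\(G\), this shows that the finite-dimensionality of \(H^{*}(X_{i,T},X_{i,T}\cap X_{i-1,G})\) is equivalent to that of \(H^{*}(X_{i,G},X_{i-1,G})\). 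So it suffices to prove the first statement.

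By the long exact sequence of the pair, this reduces to showing that \(H^{*}(X_{i,T})\) and \(H^{*}(X_{i,T}\cap X_{i-1,G})\) are both finite-dimensional. By Remark~\ref{rem:finite-G-T}, only finitely many infinitesimal \(T\)-orbit types occur in~\(X\), so there is a finite list of connected subtori \(S_{1},\dots,S_{N}\subseteq T\) arising as identity components of \(T\)-isotropy groups. Then
\begin{equation*}
  X_{i,T} = \bigcup_{\rank S_{k}\,\ge\, r-i} X^{S_{k}}
\end{equation*}
is a finite closed union, and any non-empty finite intersection \(X^{S_{k_{1}}}\cap\dots\cap X^{S_{k_{m}}}=X^{S_{k_{1}}\cdots S_{k_{m}}}\) is again the fixed-point set of a subtorus of~\(T\), hence has finite-dimensional cohomology by Borel's localization theorem (applicable because \(H^{*}(X)\) is finite-dimensional and \(X\) is embedded equivariantly in an orbifold). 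Iterated Mayer--Vietoris for finite closed covers then yields \(H^{*}(X_{i,T})\) finite-dimensional. The same strategy handles \(X_{i,T}\cap X_{i-1,G}\), but uses both sides: each point~\(x\) admits a maximal torus of~\(G_{x}^{0}\) of rank \(\ge r-i+1\) containing \((T_{x})^{0}\), and the finite-orbit-type hypothesis together with the \(N_{G}(T)\)-invariance of \(X_{i,T}\cap X_{i-1,G}\) produce a finite closed cover by sets of the form \(X^{S_{k}}\cap X^{T'}\) for finitely many tori \(T'\subseteq G\) of rank \(\ge r-i+1\). Each piece and its finite intersections are fixed-point sets of compact subgroups of~\(G\) contained in some \(X^{S_{k}}\); using the identity \(X^{H}=(X^{T_{H}})^{W_{H}}\) for a compact connected~\(H\) together with transfer for the residual finite Weyl-group action (with real coefficients) reduces each to a \(T\)-fixed-point set controlled by Borel localization.

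The main obstacle is the bookkeeping for the second cover: verifying that finitely many tori~\(T'\) suffice (using \(N_{G}(T)\)-invariance to cut down \(G\)-translates) and that the resulting fixed-point sets of the possibly non-abelian compact subgroups generated by~\(S_{k}\) and~\(T'\) have finite-dimensional cohomology. The first statement then propagates back through the long exact sequence and Proposition~\ref{thm:incl-Y-X-rel} to yield the lemma in both of its forms.
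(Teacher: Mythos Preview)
Your reduction via the long exact sequence to showing that \(H^{*}(X_{i,T})\) and \(H^{*}(X_{i,T}\cap X_{i-1,G})\) are finite-dimensional is sound, and your treatment of \(H^{*}(X_{i,T})\) by a finite cover by subtorus fixed-point sets and Mayer--Vietoris is correct (this is essentially \cite[Prop.~4.1.14]{AlldayPuppe:1993}, which the paper simply cites). The argument for \(H^{*}(X_{i,T}\cap X_{i-1,G})\), however, has genuine gaps.

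First, the identity \(X^{H}=(X^{T_{H}})^{W_{H}}\) you invoke is false in general. Take \(H=SU(2)\) acting by left translation on \(X=H/N_{H}(T_{H})\): then \(X^{H}=\emptyset\), while \(X^{T_{H}}\) is the single identity coset, automatically \(W_{H}\)-fixed. Second, even where the identity holds, transfer computes \(H^{*}(Y/W)=H^{*}(Y)^{W}\), not \(H^{*}(Y^{W})\); the cohomology of the fixed-point set of a finite group is not controlled by the invariants in cohomology. Third, you have not shown that finitely many tori~\(T'\subset G\) suffice for the cover: the maximal tori of the various \(G_{x}^{0}\) with \(x\in X_{i,T}\cap X_{i-1,G}\) vary continuously, and neither the finite-orbit-type hypothesis (which is only up to \(G\)-conjugacy) nor \(N_{G}(T)\)-invariance cuts this down to a finite list.

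The paper sidesteps all of this by induction on~\(i\). The key observation is the identity
\[
  X_{i,T}\cap X_{i-1,G}=(X_{i-1,G})_{i,T},
\]
so the troublesome set is just the \(i\)-th level of the \(T\)-orbit filtration of the \(G\)-space~\(X_{i-1,G}\). By the inductive hypothesis the groups \(H^{*}(X_{j,G},X_{j-1,G})\) for \(j<i\) are finite-dimensional, hence so is \(H^{*}(X_{i-1,G})\); and \(X_{i-1,G}\) inherits finitely many infinitesimal orbit types. One then applies exactly your first argument (equivalently \cite[Prop.~4.1.14]{AlldayPuppe:1993}) with \(X_{i-1,G}\) in place of~\(X\) to conclude that \(H^{*}((X_{i-1,G})_{i,T})\) is finite-dimensional. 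No non-abelian fixed-point sets enter at all.
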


\begin{proof}
  By induction we can assume that \(H^{*}(X_{j,G},X_{j-1,G})\) is finite-dimensional for~\(j<i\)
  and hence so is \(H^{*}(X_{i-1,G})\). Thus, \(X_{i-1,G}\) satisfies again our assumptions for~\(X\).
  By~\cite[Prop.~4.1.14]{AlldayPuppe:1993}, both \(H^{*}(X_{i,T})\) and
  \begin{equation}
    H^{*}(X_{i,T}\cap X_{i-1,G}) = H^{*}((X_{i-1,G})_{i,T})
  \end{equation}
  are finite-dimensional and therefore also the relative cohomology~\(H^{*}(X_{i,T},X_{i,T}\cap X_{i-1,G})\).
  Hence \(\HT^{*}(X_{i,T},X_{i,T}\cap X_{i-1,G})\)
  is finitely generated over~\(\RT\) and~\(\RG\), and so is the submodule of \(W\)-invariants over~\(\RG\).
  The claim now follows from Proposition~\ref{thm:incl-Y-X-rel} and Remark~\ref{rem:HG-fg}.
\end{proof}

\begin{proposition}
  \label{thm:HGXi-CM}
  The orbit filtration of~\(X\) is Cohen--Macaulay.
\end{proposition}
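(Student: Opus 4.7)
The plan is to reduce, via Proposition~\ref{thm:incl-Y-X-rel}, to a constant-rank computation for the torus action, and then to analyze it using the local structure of the orbit decomposition. By Lemma~\ref{thm:HG-hHG-CM} it suffices to show that $\HG^*(X_{i,G},X_{i-1,G})$ is zero or Cohen--Macaulay of dimension~$r-i$ over~$\RG$. Proposition~\ref{thm:incl-Y-X-rel} identifies this with $M^{W}$, where $M:=\HT^*(X_{i,T},X_{i,T}\cap X_{i-1,G})$. Since $\RT$ is free over $\RG$, a Cohen--Macaulay $\RT$-module of dimension~$d$ is also Cohen--Macaulay of dimension~$d$ over $\RG$; and the $W$-invariants form an $\RG$-direct summand via the averaging idempotent $\frac{1}{|W|}\sum_{w\in W}w$, so $M^{W}$ is either Cohen--Macaulay of the same dimension or zero. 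The task therefore reduces to showing that $M$ is zero or Cohen--Macaulay of dimension~$r-i$ over~$\RT$.

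The crucial observation is that on the complement $X_{i,T}\setminus X_{i-1,G}$ the $T$-isotropy has constant rank~$r-i$: the defining conditions force $r-i\le\rank T_{x}\le\rank G_{x}\le r-i$. Working with compact supports, \eqref{eq:HGc-rel-compl} identifies $\HTc^*(X_{i,T},X_{i,T}\cap X_{i-1,G})$ with $\HTc^*(X_{i,T}\setminus X_{i-1,G})$. By Remark~\ref{rem:finite-G-T} the space $X$ has finitely many infinitesimal $T$-orbit types, so this complement decomposes into finitely many pieces $Y_{L}=\{x:(T_{x})^{0}=L\}$ indexed by rank-$(r-i)$ subtori $L\subset T$; constant rank forces each $Y_{L}$ to be a union of connected components (by an argument parallel to Lemma~\ref{thm:properties-XL}), so $\HTc^*$ splits as $\bigoplus_{L}\HTc^*(Y_{L})$.

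On each $Y_{L}$, the subtorus~$L$ acts trivially while $T/L$ acts locally freely, so a Cartan model factorization together with Lemma~\ref{thm:cond-C-compact} applied to the $T/L$-action gives $\HTc^*(Y_{L})\cong\RL\otimes H_{c}^{*}(Y_{L}/(T/L))$. The orbit-space factor is finite-dimensional (inherited from the finite-dimensionality of $H^*(X)$), so the $\R[(\ttt/\lll)^{*}]$-action on it is nilpotent; writing $\RT=\RL\otimes\R[(\ttt/\lll)^{*}]$, the $\RT$-support of $\HTc^*(Y_{L})$ equals $\operatorname{Spec}\RL$ (of dimension~$r-i$), and a regular sequence of length $r-i$ inside $\RL_{+}$ acts regularly on this free $\RL$-module of finite rank. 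Hence each $\HTc^*(Y_{L})$, and so $M$ (with compact supports), is Cohen--Macaulay of dimension~$r-i$. The closed-supports version follows either from the general statement at the start of Section~\ref{sec:filtration} that results remain valid for the other pair of supports, or by combining Lemma~\ref{thm:HG-hHG-CM} with the universal coefficient theorem of Proposition~\ref{thm:UCT}. The main technical point is verifying that the polynomial factor $\R[(\ttt/\lll)^{*}]$ does not enlarge the $\RT$-dimension or depth of the summand; this is the step where the global finiteness of $H^*(X)$ enters decisively.
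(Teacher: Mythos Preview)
Your overall strategy matches the paper's: reduce via Proposition~\ref{thm:incl-Y-X-rel} and Lemma~\ref{thm:HG-hHG-CM} to showing that \(M=\HT^*(X_{i,T},X_{i,T}\cap X_{i-1,G})\) is Cohen--Macaulay of dimension~\(r-i\) over~\(\RT\), and then pass to \(M^{W}\) over~\(\RG\). Your averaging-idempotent argument for the \(W\)-invariants is the content of \cite[Lemmas~2.6,~2.7]{GoertschesRollenske:2011}, which the paper cites. Where you diverge is in the torus step itself: the paper quotes \cite[Prop.~4.1]{AlldayFranzPuppe:orbits4} as a black box (observing that its proof actually yields the slightly more general statement for any closed \(T\)-stable \(Y\) with \(X_{i-1,T}\subset Y\subset X_{i,T}\) and \(H^{*}(X_{i,T},Y)\) finite-dimensional), whereas you try to reprove it via the decomposition into the pieces~\(Y_{L}\).

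The gap is in your handling of supports. You switch to compact supports to obtain the splitting \(\HTc^{*}(X_{i,T}\setminus X_{i-1,G})=\bigoplus_{L}\HTc^{*}(Y_{L})\) and the computation on each summand, but both the hypothesis and the statement being proved are for closed supports. Two concrete problems arise. First, the claim that \(H_{c}^{*}(Y_{L}/(T/L))\) is finite-dimensional ``inherited from the finite-dimensionality of~\(H^{*}(X)\)'' is not justified: Lemma~\ref{thm:HXiXi1-fg} gives finite-dimensionality of \(H^{*}(X_{i,T},X_{i,T}\cap X_{i-1,G})\) with \emph{closed} supports, and this does not imply the compact-supports analogue. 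Second, your final step deriving the closed-supports Cohen--Macaulay property from the compact-supports one does not work: the remark at the start of Section~\ref{sec:filtration} asserts that each result holds for either pair of supports under the \emph{matching} hypothesis, not that one case implies the other, and neither Lemma~\ref{thm:HG-hHG-CM} nor Proposition~\ref{thm:UCT} changes supports. To salvage the argument you would have to carry out the decomposition and the \(\RL\otimes H^{*}(Y_{L}/(T/L))\) computation directly for closed supports (via tautness and excision, as in the proof of Proposition~\ref{thm:incl-Y-X-rel}); that is essentially the content of the cited result in~\cite{AlldayFranzPuppe:orbits4}.
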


\begin{proof}
  The torus case was established in~\cite[Prop.~4.1]{AlldayFranzPuppe:orbits4}.\footnote{%
  The local contractability assumptions made there
  were needed to ensure that singular and  Alexander--Spanier cohomology coincide.
  This is not necessary in our setting, \cf~\cite[Rem.~2.17]{AlldayFranzPuppe:orbits4}.}
  In fact, the proof in~\cite{AlldayFranzPuppe:orbits4} shows the following:
  Let \(Y\subset X_{i,T}\) be a \(T\)-stable closed subset containing \(X_{i-1,T}\)
  and such that \(H^{*}(X_{i,T},Y)\) is finite-dimensional. Then \(\HT^{*}(X_{i,T},Y)\) and
  \(\hHT_{*}(X_{i,T},Y)\) are zero or Cohen--Macaulay of dimension~\(r-i\).

  By Remark~\ref{rem:finite-G-T}, only finitely many infinitesimal \(T\)-orbit types occur in~\(X_{i,T}\),
  and \(H^{*}(X_{i,T},X_{i,T}\cap X_{i-1,G})\) is finite-dimensional by Lemma~\ref{thm:HXiXi1-fg}.
  By what we have said above, this implies
  that the \(\RT\)-module \(\HT^{*}(X_{i,T},X_{i,T}\cap X_{i-1,G})\)
  is zero or Cohen--Macaulay of dimension~\(r-i\).
  By~\cite[Lemma~2.7]{GoertschesRollenske:2011}
  the same holds for the \(\RG\)-submodule of \(W\)-invariants.
  Proposition~\ref{thm:incl-Y-X-rel} now shows
  that also \(\HG^{*}(X_{i},X_{i-1})\) is Cohen--Macaulay of dimension~\(r-i\).
  Hence the filtration is Cohen--Macaulay by Lemma~\ref{thm:HG-hHG-CM}.
\end{proof}

\subsection{Partial exactness}
\label{sec:partial}

Let \(X\) be a \(G\)-space
such that \(H^{*}(X)\) is finite-dimensional
and such that only finitely many infinitesimal orbit types occur in~\(X\).
Moreover, let \(\FF=(X_{i})\) be a Cohen--Macaulay filtration of~\(X\),
for example the orbit filtration.

\begin{theorem}
  \label{thm:partial-exact}
  Let \(0\le j\le r\). Then
  \(\HG^{*}(X)\) is a \(j\)-th syzygy over~\(\RG\)
  if and only if
  \(H^{i}(\barAB{G}{*}(\FF))=0\) for all~\(i\le j-2\),
  \ie, if and only if the part
  \begin{equation*}
    0\to \HG^{*}(X) \to \HG^{*}(X_{0}) \to \dots \to \HG^{*+j-1}(X_{j-1},X_{j-2})
  \end{equation*}
  of the Atiyah--Bredon sequence for~\(\FF\) is exact.
\end{theorem}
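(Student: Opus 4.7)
The plan is to reduce the statement to the torus case established in \cite[Thm.~5.7]{AlldayFranzPuppe:orbits1} and~\cite[Thm.~4.8]{AlldayFranzPuppe:orbits4}, using faithfully flat base change along the restriction \(\RG\hookrightarrow\RT\).

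First I would invoke Proposition~\ref{thm:syz-res} to translate the syzygy condition across this extension: \(\HG^{*}(X)\) is a \(j\)-th syzygy over \(\RG\) if and only if the \(\RT\)-module \(\HT^{*}(X)=\RT\otimes_{\RG}\HG^{*}(X)\) is a \(j\)-th syzygy over \(\RT\). Simultaneously, Proposition~\ref{thm:G-filt-T-filt} ensures that \(\FF\) remains a Cohen--Macaulay filtration when \(X\) is viewed as a \(T\)-space, so the augmented Atiyah--Bredon complex \(\barAB{T}{*}(\FF)\) is defined.

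Next I would establish the base-change identity
\begin{equation*}
  \barAB{T}{*}(\FF) = \RT\otimes_{\RG}\barAB{G}{*}(\FF)
\end{equation*}
as a complex of \(\RT\)-modules. Termwise equality is immediate from Proposition~\ref{thm:HGc-res-ind}\eqref{thm:HGc-res-ind-2} applied to \(X\) and to each pair \((X_{i},X_{i-1})\); compatibility with the boundary maps \(\iota^{*}\) and \(\delta_{i}\) is a naturality check for the inclusion \(T\hookrightarrow G\) applied to \(X_{0}\hookrightarrow X\) and to the long exact sequences of the triples \((X_{i+1},X_{i},X_{i-1})\). Because \(\RT\) is a nonzero finitely generated free, hence faithfully flat, \(\RG\)-module, this yields
\begin{equation*}
  H^{i}(\barAB{T}{*}(\FF)) = \RT\otimes_{\RG}H^{i}(\barAB{G}{*}(\FF)),
\end{equation*}
so that \(H^{i}(\barAB{G}{*}(\FF))\) vanishes if and only if \(H^{i}(\barAB{T}{*}(\FF))\) does, for every~\(i\).

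The problem is thereby reduced to the corresponding assertion for the torus \(T\), proved by Allday--Franz--Puppe. Those references state the result for the \(T\)-orbit filtration of \(X\), but by Theorem~\ref{thm:Ext=HAB} applied in the torus setting the cohomology of \(\barAB{T}{*}(\FF)\) does not depend on the chosen Cohen--Macaulay filtration, so the conclusion carries over to our \(\FF\). The main technical point requiring care is the verification that the base-change identity above holds as a chain map and not merely termwise; this amounts to checking that the restriction from the Cartan model for \(G\) to the Cartan model for \(T\) is natural with respect to the connecting homomorphisms in play, which is straightforward.
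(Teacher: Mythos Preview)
Your proposal is correct and follows essentially the same argument as the paper: reduce to the torus case of Allday--Franz--Puppe via Proposition~\ref{thm:syz-res} and Proposition~\ref{thm:G-filt-T-filt}, use Proposition~\ref{thm:HGc-res-ind}\eqref{thm:HGc-res-ind-2} together with the freeness of~\(\RT\) over~\(\RG\) to identify \(H^{i}(\barAB{T}{*}(\FF))\) with \(\RT\otimes_{\RG}H^{i}(\barAB{G}{*}(\FF))\), and invoke Theorem~\ref{thm:Ext=HAB} to pass from the \(T\)-orbit filtration to the given~\(\FF\). Your additional remark that the base-change identity must be verified at the level of chain maps is a fair point of care, but otherwise the route is the same.
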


\begin{proof}
  The torus case was established in~\cite[Thm.~5.7]{AlldayFranzPuppe:orbits1}
  (or~\cite[Thm.~4.8]{AlldayFranzPuppe:orbits4}). It is stated there
  for the orbit filtration of a \(T\)-space, but we know from Theorem~\ref{thm:Ext=HAB}
  that \(H^{*}(\barAB{T}{*}(\FF))\) is independent of the Cohen--Macaulay filtration~\(\FF\).
  
  To reduce the general case to the torus case, we recall from Proposition~\ref{thm:G-filt-T-filt}
  that \(\FF\) is also a Cohen--Macaulay filtration of~\(X\), considered
  as a \(T\)-space. In addition, we observe that
  \begin{equation}
    \label{eq:HiABGT-HiABG}
    H^{i}(\barAB{T}{*}(\FF)) = \RT\otimes_{\RG}H^{i}(\barAB{G}{*}(\FF))
  \end{equation}
  for~\(0\le i\le r\), which follows from the isomorphism
  \begin{equation}
    \HT^{*}(X_{i},X_{i-1}) = \RT\otimes_{\RG}\HG^{*}(X_{i},X_{i-1})
  \end{equation}
  given by Proposition~\ref{thm:HGc-res-ind}\,\eqref{thm:HGc-res-ind-2}
  and the fact that \(\RT\) is free over~\(\RG\).

  By Proposition~\ref{thm:syz-res}, \(\HG^{*}(X)\) is a \(j\)-th syzygy over~\(\RG\)
  if and only if \(\HT^{*}(X)\) is a \(j\)-th syzygy over~\(\RT\).
  As remarked above,
  this latter condition is equivalent to the vanishing of~\(H^{i}(\barAB{T}{*}(\FF))\) for~\(i\le j-2\),
  which in turn is equivalent to the vanishing
  of~\(H^{i}(\barAB{G}{*}(\FF))\) for~\(i\le j-2\) by~\eqref{eq:HiABGT-HiABG}
  and again the freeness of~\(\RT\) over~\(\RG\).
\end{proof}

\begin{corollary}
  Assume that \(X\) is a compact oriented \(G\)-orbifold.
  The \(\RG\)-module \(\HG^{*}(X)\) is torsion-free (reflexive) if and only if
  equivariant Poincaré pairing
  \begin{equation*}
    \HG^{*}(X) \times \HG^{*}(X) \to \RG,
    \quad
    (\alpha,\beta) \mapsto \int_{X}\alpha\beta 
  \end{equation*}
  is non-degenerate (perfect).
\end{corollary}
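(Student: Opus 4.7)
The idea is to reduce the two equivalences to Theorem~\ref{thm:partial-exact} applied with \(j=1\) and \(j=2\) to the orbit filtration~\((X_{i,G})\) of~\(X\), which is Cohen--Macaulay by Proposition~\ref{thm:HGXi-CM}. Since \(X\) is compact, \(H^{*}(X)\) is finite-dimensional, and the differentiable slice theorem ensures that only finitely many orbit types---and hence finitely many infinitesimal orbit types---occur in~\(X\); the hypotheses of Theorems~\ref{thm:Ext=HAB} and~\ref{thm:partial-exact} are therefore satisfied. Combining Theorem~\ref{thm:partial-exact} with the explicit description of the map from~\(\HG^{*}(X)\) given in Theorem~\ref{thm:Ext=HAB}, we obtain: \(\HG^{*}(X)\) is torsion-free if and only if the canonical evaluation map
\begin{equation*}
  A\colon \HG^{*}(X) \to \Hom_{\RG}(\hHG_{*}(X),\RG),
  \quad \alpha\mapsto\bigl(c\mapsto\pair{\alpha,c}\bigr),
\end{equation*}
is injective; and \(\HG^{*}(X)\) is reflexive if and only if \(A\) is an isomorphism.

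The remaining task is to identify the adjoint
\begin{equation*}
  P^{\sharp}\colon \HG^{*}(X)\to\Hom_{\RG}(\HG^{*}(X),\RG),
  \quad \alpha\mapsto\Bigl(\beta\mapsto\int_{X}\alpha\beta\Bigr)
\end{equation*}
of the equivariant Poincaré pairing with~\(A\) via equivariant Poincaré duality. Setting \(n=\dim X\), Proposition~\ref{thm:PD-equiv} provides the \(\RG\)-module isomorphism \(\PD\colon \HG^{*}(X)\to\hHG_{n-*}(X)\); by its construction from the integration map (Section~\ref{sec:PD-equiv}), it satisfies \(\pair{\alpha,\PD(\beta)}=\int_{X}\alpha\beta\) up to a sign depending on degrees. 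Consequently,
\begin{equation*}
  P^{\sharp}(\alpha)(\beta)=\pair{\alpha,\PD(\beta)}=A(\alpha)\bigl(\PD(\beta)\bigr),
\end{equation*}
so that \(P^{\sharp}\) and~\(A\) differ only by composition with the isomorphism \(\Hom_{\RG}(\hHG_{*}(X),\RG)\to\Hom_{\RG}(\HG^{*}(X),\RG)\) induced by~\(\PD\). In particular, \(P^{\sharp}\) is injective if and only if \(A\) is, and likewise for surjectivity.

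Putting the two steps together: \(P\) is non-degenerate if and only if \(\HG^{*}(X)\) is torsion-free, and \(P\) is perfect if and only if \(\HG^{*}(X)\) is reflexive. The only real work lies in the second paragraph, namely verifying that the canonical map supplied by Theorem~\ref{thm:Ext=HAB} is indeed the evaluation map, and that equivariant Poincaré duality transforms the Poincaré pairing into this evaluation pairing via the standard duality between cup and cap products. Once these identifications are in place, the remainder of the argument is formal.
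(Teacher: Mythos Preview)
Your proof is correct and follows essentially the same approach as the paper's own argument: both reduce the claim to Theorem~\ref{thm:partial-exact} (for~\(j=1,2\)) via the identification in Theorem~\ref{thm:Ext=HAB} of the canonical map~\(\HG^{*}(X)\to\Hom_{\RG}(\hHG_{*}(X),\RG)\), and then observe that equivariant Poincaré duality (Proposition~\ref{thm:PD-equiv}) converts this canonical map into the adjoint of the Poincaré pairing. The paper also mentions an alternative route via Proposition~\ref{thm:syz-res} and reduction to the torus case, but your approach matches its primary argument.
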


This characterization of the non-degeneracy of the equivariant Poincaré pairing
has been given by Guillemin--Ginzburg--Karshon~\cite[Thm.~C.70]{GuilleminGinzburgKarshon:2002}
under the assumption that the maximal-rank stratum~\(X_{0,G}\) is smooth and orientable.

\begin{proof}
  This follows from Theorem~\ref{thm:partial-exact} by observing
  that the map
  \begin{equation}
    \HG^{*}(X) \to \Hom_{\RG}(\HG^{*}(X),\RG)
  \end{equation}
  induced by the equivariant Poincaré pairing is the composition
  of the canonical map~\(\HG^{*}(X) \to \Hom_{\RG}(\hHG_{*}(X),\RG)=H^{0}(\AB{G}{*}(\FF))\)
  with the \(\RG\)-transpose of the Poincaré duality isomorphism~\(\HG^{*}(X)\to\hHG_{*}(X)\)
  from Proposition~\ref{thm:PD-equiv}.

  It can alternatively be deduced from Proposition~\ref{thm:syz-res}
  together with the observation that the \(G\)-equivariant Poincaré pairing
  is non-degenerate or perfect if and only if the \(T\)-equivariant one is so.
  We leave the details to the reader.
\end{proof}

\begin{proposition}
  \label{thm:syzygy-free}
  Assume that \(X\) is a compact orientable \(G\)-orbifold.
  If \(\HG^{*}(X)\) is a syzygy of order~\(\ge r/2\), then it is free over~\(\RG\).
\end{proposition}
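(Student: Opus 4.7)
The plan is to reduce to the torus analogue of the statement, which is due to Allday--Franz--Puppe. By Proposition~\ref{thm:syz-res}, the hypothesis that $\HG^*(X)$ is a $j$-th syzygy over~$\RG$ with $j \ge r/2$ is equivalent to $\HT^*(X)$ being a $j$-th syzygy over~$\RT$. Viewed as a $T$-space, $X$ remains a compact orientable orbifold, since orientability is intrinsic to the orbifold structure and does not depend on the group action.

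I would next invoke the torus version of the present proposition, established in~\cite{AlldayFranzPuppe:orbits4} for compact orientable Poincaré duality $T$-spaces, to conclude that $\HT^*(X)$ is free over~$\RT$. A free module is trivially an $r$-th syzygy, so by Proposition~\ref{thm:syz-res} applied in the opposite direction, $\HG^*(X)$ is an $r$-th syzygy over~$\RG$. Since $\RG$ is a polynomial ring in $r$~indeterminates, any $r$-th syzygy is free---see Proposition~\ref{thm:syzygy}\,\eqref{thm:syzygy-3}, which at the maximal homogeneous ideal combined with Auslander--Buchsbaum forces projective dimension zero---and this completes the argument.

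The principal step is thus outsourced to the torus case, and Proposition~\ref{thm:syz-res} is exactly what makes this transfer frictionless. A self-contained alternative would combine equivariant Poincaré duality (Proposition~\ref{thm:PD-equiv}), identifying $\HG^*(X)$ with $\hHG_*(X)$ up to a degree shift, with the reflexivity consequence of being a $j$-th syzygy (via Theorem~\ref{thm:partial-exact}) to obtain a self-duality $\HG^*(X) \cong \Hom_{\RG}(\HG^*(X),\RG)$; together with the Ext vanishings $\Ext^i_{\RG}(\HG^*(X),\RG) = 0$ for $1 \le i \le j-2$ coming from Theorem~\ref{thm:Ext=HAB} and the Auslander--Buchsbaum bound $\operatorname{pdim}\HG^*(X) \le r-j \le r/2$, one tries to force freeness directly. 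The subtle point in this purely algebraic argument is ruling out nonzero Ext groups in the ``middle range'' $j-1 \le i \le r/2$ at the borderline case $j = \lceil r/2 \rceil$---naive duality leaves a gap of a few positions---which is precisely where the torus reduction sidesteps the difficulty.
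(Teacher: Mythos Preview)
Your proposal is correct. The paper's proof sketch offers two routes: the primary one redoes the torus argument of~\cite[Prop.~5.12]{AlldayFranzPuppe:orbits1} directly in the \(G\)-setting using Theorems~\ref{thm:Ext=HAB} and~\ref{thm:partial-exact}, while the alternative is exactly your reduction to the torus case via Proposition~\ref{thm:syz-res}. You have chosen the second route, which the paper explicitly endorses; your ``self-contained alternative'' paragraph in turn sketches the paper's primary route. One minor citation slip: the torus result you invoke is~\cite[Prop.~5.12]{AlldayFranzPuppe:orbits1}, not~\cite{AlldayFranzPuppe:orbits4}.
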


\begin{proof}
  This follows from Theorems~\ref{thm:Ext=HAB} and~\ref{thm:partial-exact}
  in the same way as for the torus case~\cite[Prop.~5.12]{AlldayFranzPuppe:orbits1}.
  Using Proposition~\ref{thm:syz-res},
  one could also deduce it directly from the torus case.
\end{proof}

The bound~``\(r/2\)'' in Proposition~\ref{thm:syzygy-free} is optimal for any~\(G\):

\begin{example}
  Let~\(a\),~\(b\ge1\) and \(0\le m\le (r-1)/2\).
  Let \(Y\) be the \emph{big polygon space} defined by
  \begin{alignat}{2}
    \|u_{j}\|^{2} + \|z_{j}\|^{2} &= 1  &\qquad& (1\le j\le r), \\
    u_{1}+\dots+u_{2m+1} &= 0
  \end{alignat}
  where \(u_{1}\),~\dots,~\(u_{r}\in\C^{a}\) and~\(z_{1}\),~\dots,~\(z_{r}\in\C^{b}\).
  The torus~\(T\cong(S^{1})^{r}\) acts on~\(Y\) by rotating the variables~\(z_{j}\).
  In~\cite[Sec.~5]{Franz:maximal} it is shown that \(Y\) is a compact orientable \(T\)-manifold
  and that \(\HT^{*}(Y)\) is a syzygy of order exactly~\(m\) over~\(\RT\).

  The induced \(G\)-manifold~\(X=G\times_{T}Y\) is again compact orientable.
  By Proposition~\ref{thm:syz-ind}, \(\HG^{*}(X)\) is a syzygy of order exactly~\(m\) over~\(\RG\).
  In particular, we see that any syzygy order less than~\(r/2\) can occur among the
  equivariant cohomology modules of compact orientable \(G\)-manifolds.
\end{example}

\section{Infinitesimal orbit types}
\label{sec:finite}

\subsection{Preliminaries on torus actions}
\label{sec:prelim-torus}

Recall that \(T\cong(S^{1})^{r}\) is a torus.
Several properties of \(T\)-equivariant cohomology with compact supports
and homology with closed supports
were established in
~\cite{AlldayFranzPuppe:orbits4} for \(T\)-spaces~\(X\)
with finite-dimensional cohomology and{\slash}or
finitely many infinitesimal orbit types\footnote{%
  The stronger assumption of finitely many \emph{orbit types}
  made in~\cite[Sec.~2.1]{AlldayFranzPuppe:orbits4}
  only serves to allow the use of singular cohomology for the quotient~\(X/T\)
  instead of Alexander--Spanier cohomology.};
  we now drop these restrictions and indicate how to adapt the proofs.

Let \(X\) be a \(T\)-space.

\begin{proposition}
  \label{thm:HTc-Xalpha}
  Let \(K\subset T\) be a subtorus and set \(L=T/K\).
  If \(K\) acts trivially on~\(X\), then
  there are isomorphisms of \(\RR\)-modules
  \begin{align*}
    \HTc^{*}(X) &= \RR\otimes_{\RL}\HLc^{*}(X), &
    \hHTc_{*}(X) &= \RR \otimes_{\RL} \hHLc_{*}(X).
  \end{align*}
\end{proposition}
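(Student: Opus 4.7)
My strategy is to compute both sides directly at the chain level using the Cartan model, after first arranging that $K$ acts trivially on the entire ambient orbifold, not merely on~$X$. Since $K$ acts trivially on~$X$ we have $X\subset Z^{K}$, and $Z^{K}$ is a closed $T$-sub-orbifold (as $T$ centralizes~$K$); by Lemma~\ref{thm:iso-as-dr-equiv-c} I can work with $\Omega_{Z^{K},c}^{*}(X)$ in place of $\Omega_{Z,c}^{*}(X)$ without changing~$\HTc^{*}(X)$, so I replace~$Z$ by~$Z^{K}$. After this reduction, the fundamental vector field of every~$\xi\in\kkk$ on~$Z$ vanishes identically, so $L_{\xi}=0$ and $\iota_{\xi}=0$ on~$\Omega_{Z,c}^{*}(X)$ and, by duality, on~$\Omega_{Z,c}^{*}(X)^{\vee}$ as well.

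Next, I would fix a vector space splitting $\ttt = \kkk\oplus\lll_{0}$ with $\lll_{0}$ mapping isomorphically to $\lll = \ttt/\kkk$. This identifies $\RT = \R[\ttt^{*}] = \RK\otimes\RL$ as graded $T$-algebras (with trivial $T$-action, since $T$ is abelian), exhibiting~\RT as a free~\RL-module. Picking a basis $(\xi_{k})$ of~$\ttt$ adapted to this splitting, the Cartan differential
\[
  d(f\otimes a) = f\otimes da - \sum\nolimits_{k} f x_{k}\otimes\iota_{\xi_{k}} a
\]
only receives nontrivial contributions from those $\xi_{k}\in\lll_{0}$, so it agrees with the $L$-Cartan differential once the $\RK$-factor is pulled out as constants. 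Combining this with the observations that $T$ acts trivially on~\RK and~\RL and through its quotient~$L$ on~$\Omega_{Z,c}^{*}(X)$, I get
\begin{align*}
  \CarT^{*}(\Omega_{Z,c}^{*}(X))
    &= \bigl(\R[\ttt^{*}]\otimes\Omega_{Z,c}^{*}(X)\bigr)^{T} \\
    &= \RK\otimes\bigl(\RL\otimes\Omega_{Z,c}^{*}(X)\bigr)^{L}
     = \RK\otimes\CarL^{*}(\Omega_{Z,c}^{*}(X))
\end{align*}
as dg~\RT-modules. Rewriting the right-hand side as $\RT\otimes_{\RL}\CarL^{*}(\Omega_{Z,c}^{*}(X))$ via the splitting, and using that~\RT is flat over~\RL to commute cohomology with the tensor product, I obtain $\HTc^{*}(X) = \RT\otimes_{\RL}\HLc^{*}(X)$.

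The homology statement follows by running the identical argument with $\Omega_{Z,c}^{*}(X)$ replaced by~$\Omega_{Z,c}^{*}(X)^{\vee}$: every ingredient (trivial $T$-action on~$\R[\ttt^{*}]$, vanishing of the $\kkk$-operators $L_{\xi}$ and~$\iota_{\xi}$ on the dual, flatness of~\RT over~\RL) carries over verbatim, giving $\hHTc_{*}(X) = \RT\otimes_{\RL}\hHLc_{*}(X)$. The only subtle step in the whole argument is the opening reduction to an ambient orbifold on which~$K$ acts trivially; without it, the contractions along~$\kkk$ need not vanish on $\Omega_{Z,c}^{*}(X)$ and the Cartan differential would fail to split along $\ttt = \kkk\oplus\lll_{0}$, which is the crux of the computation.
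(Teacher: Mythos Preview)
Your argument is correct and follows the same approach as the paper's proof: fix a splitting \(T\cong K\times L\), hence \(\RT\cong\RK\otimes\RL\), and observe that the Cartan model factors as \(\CTc^{*}(X)=\RK\otimes\CLc^{*}(X)=\RT\otimes_{\RL}\CLc^{*}(X)\) at the chain level. You are in fact more careful than the paper on one point it leaves implicit, namely the preliminary passage from the ambient orbifold~\(Z\) to~\(Z^{K}\) so that the contractions~\(\iota_{\xi}\) for~\(\xi\in\kkk\) genuinely vanish on germs of forms at~\(X\); without this the chain-level splitting of the Cartan differential would not be justified.
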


\begin{proof}
  Choose a splitting~\(T\cong K\times L\), which induces an isomorphism~\(\RR\cong\RK\otimes\RL\).
  Since \(K\) acts trivially on~\(X\), we have
  \begin{equation}
    \CTc^{*}(X) = \RK\otimes\CLc^{*}(X) = \RR\otimes_{\RL}\CLc^{*}(X),
  \end{equation}
  which gives the first isomorphism. The homological case is analogous.
\end{proof}

Let \(X_{\alpha}\), \(\alpha\in A\), be the pieces of the partition
of~\(X\) into infinitesimal \(T\)-orbit types.
For~\(\alpha\in A\) let
\(T_{\alpha}\subset T\) be the common identity component of the~\(T_{x}\) with~\(x\in X_{\alpha}\).
Moreover, let \(\ttt_{\alpha}\) be the Lie algebra of~\(T_{\alpha}\),
and let
\(r_{\alpha}=r-\dim T_{\alpha}\) be the common dimension of the \(T\)-orbits in~\(X_{\alpha}\).

Regarding the orbit filtration~\((X_{p})=(X_{p,T})\) of~\(X\),
we have
\begin{equation}
  \label{eq:def-Xp}
  X_{p} = 
  \bigcup_{\!r_{\alpha}\le p\!} X_{\alpha}
\end{equation}
for~\(-1\le p\le r\). Hence
\(X_{p}\setminus X_{p-1}\) is the disjoint union
of the~\(X_{\alpha}\) with~\(r_{\alpha}=p\).

\begin{lemma}
  \label{thm:E1}
  We have
  \begin{align*}
    \HTc^{*}(X_{p}\setminus X_{p-1})
    &= \bigoplus_{\substack{\alpha\in A\\r_{\alpha}=p}} \HTc^{*}(X_{\alpha}), \\
    \hHTc_{*}(X_{p}\setminus X_{p-1})
    &= \prod_{\substack{\alpha\in A\\r_{\alpha}=p}} \hHTc_{*}(X_{\alpha}).
  \end{align*}
\end{lemma}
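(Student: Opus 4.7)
The plan is to separate this into a topological statement — that $X_p \setminus X_{p-1}$ is a topological disjoint union of the $X_\alpha$ with $r_\alpha = p$ — and a formal one, namely that $\HTc^*$ turns disjoint unions into direct sums while $\hHTc_*$ turns them into products.

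For the topological step, I would apply the differentiable slice theorem to the ambient $T$-orbifold $Z$ in which $X$ is embedded. A $T$-invariant neighbourhood of any $x \in X$ in $Z$ is equivariantly modelled on $T \times_{T_x} V$ for a slice representation $V$, so every nearby point $y$ satisfies $T_y \subset T_x$. If $x \in X_\alpha$ with $r_\alpha = p$ and $y$ lies in $X_p \setminus X_{p-1}$ close enough to $x$, then $\dim T_y = r - p = \dim T_x$ forces $T_y^0 = T_x^0 = T_\alpha$, so $y \in X_\alpha$. Hence $X_\alpha$ is open in $X_p \setminus X_{p-1}$; its complement being the union of the remaining strata, it is also closed, so $X_p \setminus X_{p-1} = \bigsqcup_{r_\alpha = p} X_\alpha$ as a topological disjoint union.

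From here the first formula is routine. Compactly supported differential forms on a topological disjoint union are concentrated on finitely many components, yielding an isomorphism of $\Tstar$-modules $\Omega_c^*(X_p \setminus X_{p-1}) \cong \bigoplus_{r_\alpha = p} \Omega_c^*(X_\alpha)$. Since $T$ acts trivially on $\R[\ttt^*]$, the Cartan functor simplifies to $\CarT^*(A) = \R[\ttt^*] \otimes A^T$, which commutes with direct sums, as does cohomology; composing these gives the first identity.

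For the homology identity, I would dualise to obtain $\Omega_c^*(X_p \setminus X_{p-1})^\vee \cong \prod_{r_\alpha = p} \Omega_c^*(X_\alpha)^\vee$. In a fixed cohomological degree only finitely many finite-dimensional graded pieces of $\R[\ttt^*]$ contribute, so $\CarT^*$ commutes with arbitrary products degreewise, and cohomology commutes with products; this yields the second formula. The topological decomposition step is the main obstacle; everything afterwards is formal manipulation with the Cartan complex.
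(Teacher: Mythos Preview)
Your proposal is correct and follows essentially the same route as the paper's proof. The paper's argument is extremely terse: it simply asserts that \(\Cc^{*}(X_{p}\setminus X_{p-1})\) is the direct sum of the~\(\Cc^{*}(X_{\alpha})\), that its dual is therefore the direct product of the duals, and that both identities follow. You have supplied the justification the paper omits, namely the slice-theorem argument showing that each \(X_{\alpha}\) is open (hence clopen) in \(X_{p}\setminus X_{p-1}\), so that the set-theoretic partition stated just before the lemma is in fact a topological disjoint union; and you have spelled out why the Cartan functor \(\CarT^{*}\) respects the relevant direct sums and products. The only place to tighten your wording is the product step: the reason ``only finitely many graded pieces of \(\R[\ttt^{*}]\) contribute'' in a fixed degree is that \(\Cc^{*}(X_{\alpha})^{\vee}\) is bounded (being the dual of a complex concentrated in degrees \(0,\dots,\dim Z\)); making this explicit removes any ambiguity.
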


\begin{proof}
  The first isomorphism is a consequence of the fact that
  \(\Cc^{*}(X_{p}\setminus X_{p-1})\)
  is the direct sum of the~\(\smash{\Cc^{*}(X_{\alpha})}\).
  Its dual~\(\Cc^{*}(X_{p}\setminus X_{p-1})^{\vee}\)
  therefore is the direct product of the \(\Cc^{*}(X_{\alpha})^{\vee}\),
  which implies the second identity.
\end{proof}

For a multiplicative subset~\(\SS \subset\RR\),
set
\begin{equation}
  A(\SS )=\bigl\{\,\alpha\in A\bigm| \SS \cap\ker(\RR\to\RRa)=\emptyset\,\bigr\},
\end{equation}
and let \(X^{\SS }\) be the union of the~\(X_{\alpha}\) with~\(\alpha\in A(\SS )\).
By the slice theorem, \(X^{\SS }\) is closed in~\(X\).

\begin{proposition}[Localization theorem]
  \label{thm:localization}
  Let \(\SS \subset \RR\) be a multiplicative subset.
  The inclusion~\(X^{\SS }\hookrightarrow X\) induces isomorphisms
  of \(\SS ^{-1}\RR\)-modules
  \begin{align*}
    \SS ^{-1}\HTc^{*}(X) &\to \SS ^{-1}\HTc^{*}(X^{\SS }), \\
    \SS ^{-1}\hHTc_{*}(X^{\SS }) &\to \SS ^{-1}\hHTc_{*}(X).
  \end{align*}
\end{proposition}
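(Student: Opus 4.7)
The plan is to deduce both isomorphisms from the vanishing $\SS^{-1}\HTc^{*}(X_{\alpha}) = 0 = \SS^{-1}\hHTc_{*}(X_{\alpha})$ for each infinitesimal orbit type $X_{\alpha}$ with $\alpha\notin A(\SS)$, and then to assemble via the spectral sequence of the orbit filtration. More precisely, the orbit filtration $(X_{p})$ of $X$ restricts to one of $X^{\SS}$ by intersection, and both filtered spectral sequences converge because their length is bounded by $r+1$. The closed inclusion $X^{\SS}\hookrightarrow X$ induces a morphism between them; at the $E_{1}$-page, \eqref{eq:HGc-rel-compl} together with Lemma~\ref{thm:E1} identifies $\HTc^{*}(X_{p}\cap X^{\SS},X_{p-1}\cap X^{\SS})$ with the direct summand of $\HTc^{*}(X_{p},X_{p-1})=\bigoplus_{r_{\alpha}=p}\HTc^{*}(X_{\alpha})$ indexed by the $\alpha\in A(\SS)$. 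Since localization is exact, vanishing on the remaining strata forces the localized morphism to be an isomorphism already at $E_{1}$, hence on the limits. The homology statement is handled analogously, using the direct-product formula from Lemma~\ref{thm:E1}.

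To establish the key vanishing on a single $X_{\alpha}$ with $\alpha\notin A(\SS)$, fix a splitting $T\cong K\times L$ with $K=T_{\alpha}$, giving $\RR=\RK\otimes\RL$, $\RRa=\RK$, and $\ker(\RR\to\RRa)=\RK\otimes\RL^{+}$, where $\RL^{+}\subset\RL$ denotes the ideal of elements of positive degree. Choose $s\in\SS\cap\ker(\RR\to\RRa)$. By Proposition~\ref{thm:HTc-Xalpha}, $\HTc^{*}(X_{\alpha})\cong\RK\otimes\HLc^{*}(X_{\alpha})$ as an $\RR$-module; because $L=T/T_{\alpha}$ acts locally freely on $X_{\alpha}$, Lemma~\ref{thm:cond-C-compact} yields $\HLc^{*}(X_{\alpha})\cong\Hc^{*}(X_{\alpha}/L)$ with $\RL$-action given by multiplication with the Chern--Weil classes of the principal $L$-orbibundle $X_{\alpha}\to X_{\alpha}/L$. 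Hence each element of $\RL^{+}$ raises cohomological degree by at least~$2$. Because $X_{\alpha}/L$ sits inside the finite-dimensional orbifold $Z/L$, its Alexander--Spanier cohomology with compact supports vanishes above some bound $d_{0}$; thus each $w\in\HLc^{*}(X_{\alpha})$ is annihilated by $(\RL^{+})^{N}$ once $2N>d_{0}-|w|$. Any $m\in\HTc^{*}(X_{\alpha})$ is a finite sum of simple tensors $a_{j}\otimes w_{j}$; choosing $N$ large enough to work for each $w_{j}$ and using $s^{N}\in\RK\otimes(\RL^{+})^{N}$ gives $s^{N}\cdot m=0$. The homological vanishing follows identically: the second isomorphism of Proposition~\ref{thm:HTc-Xalpha} and Lemma~\ref{thm:Adual-Wstar} yield $\hHTc_{*}(X_{\alpha})\cong\RK\otimes\hHLc_{*}(X_{\alpha})$ with $\hHLc_{*}(X_{\alpha})$ concentrated in a bounded range of degrees dual to that of $\HLc^{*}(X_{\alpha})$, and the adjoint $\RL^{+}$-action is again locally nilpotent.

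The main obstacle is this last reduction: without finite-dimensionality hypotheses $\HTc^{*}(X_{\alpha})$ need not be finitely generated over~$\RR$, so one cannot hope for a single power of~$s$ to annihilate the whole module, and must argue element by element. What makes this work is that the $\HLc^{*}$-factor of each element is bounded in cohomological degree by the dimension of the ambient orbifold, and this bound suffices to produce the required power of $s$. This degree bound from the ambient orbifold is precisely the improvement that lets the argument dispense with the finite-dimensionality and finite-infinitesimal-orbit-type assumptions used in the original proof of~\cite{AlldayFranzPuppe:orbits4}.
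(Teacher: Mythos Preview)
Your cohomological argument is correct and matches the paper's. The gap is in the homological half. You establish that each~$\hHTc_{*}(X_{\alpha})$ with~$\alpha\notin A(\SS)$ is annihilated by some~$s_{\alpha}^{N}\in\SS$, with~$N$ bounded in terms of~$\dim Z$; but to run the spectral-sequence comparison you need the \emph{product}~$\prod_{\alpha\notin A(\SS),\,r_{\alpha}=p}\hHTc_{*}(X_{\alpha})$ from Lemma~\ref{thm:E1} to become zero after localization. Localization does not commute with infinite products: although the exponent~$N$ is uniform, the element~$s_{\alpha}$ depends on~$\alpha$, and no single element of~$\SS$ need kill all factors at once. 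Concretely, take $T=(S^{1})^{2}$ and $X=\bigsqcup_{n\ge1}S^{1}$ with~$T$ acting on the $n$-th circle through a character whose kernel has Lie algebra cut out by~$\ell_{n}=x+ny\in\ttt^{*}$, and let $\SS$ be generated by all the~$\ell_{n}$. Then $X^{\SS}=\emptyset$, while $\hHTc_{*}(X)=\prod_{n}\RR/(\ell_{n})$ up to a shift, and the element~$(1)_{n}$ is not $\SS$-torsion because any~$s\in\SS$ is a product of finitely many~$\ell_{n}$ and hence remains nonzero in all but finitely many factors.

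The paper does \emph{not} attempt a parallel spectral-sequence argument for homology; instead it deduces the homological isomorphism from the cohomological one via the universal coefficient spectral sequence (Proposition~\ref{thm:UCT}), citing~\cite[Prop.~2.5]{AlldayFranzPuppe:orbits4}. That reference, however, works under the finiteness hypotheses that the present section explicitly drops, and the example above shows that the homological half of the proposition actually fails without them (the needed implication ``$\SS^{-1}M=0\Rightarrow\SS^{-1}\Ext_{\RR}^{*}(M,\RR)=0$'' is false for non-finitely-generated~$M$, by the same example). So your direct approach cannot be repaired as stated; the statement itself appears to require some finiteness assumption on the infinitesimal orbit types for the homological part.
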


Note that we put no restriction on the number of infinitesimal orbit types in~\(X\).

\begin{proof}
  For the cohomological claim we assume first \(X=X_{\alpha}\)
  and \(A(\SS )=\emptyset\), say \(f\in \SS \cap\ker(\RR\to\RRa)\).
  Since \(L=T/T_{\alpha}\) acts locally freely on~\(X_{\alpha}\),
  we have \(H_{L,c}^{*}(X_{\alpha}) = \Hc^{*}(X_{\alpha}/L)\)
  by Lemma~\ref{thm:cond-C-compact}.
  In particular, \(H_{L,c}^{*}(X_{\alpha})\) is bounded.
  Because the kernel of the restriction map~\(\RR\to\RRa\)
  is generated by~\(\lll^{*}\subset\RR\),
  this together with Proposition~\ref{thm:HTc-Xalpha} shows that
  a power of~\(f\) annihilates \(\HTc^{*}(X_{\alpha})\),
  so that \(\SS ^{-1}\HTc^{*}(X_{\alpha})=0\).

  For general~\(X\),
  the orbit filtration gives rise to
  the decreasing filtration~\(\CTc^{*}(X\setminus X_{p-1})\) of~\(\CTc^{*}(X)\),
  hence to a spectral sequence converging to~\(\HTc^{*}(X)\) with
  \begin{align}
    E_{0}^{p}(X) &= \CarT^{*}(\Omega_{X,c}^{*}(X_{p},X_{p-1})) = \CTc^{*}(X_{p},X_{p-1}), \\
    E_{1}^{p}(X) &= \HTc^{*}(X_{p},X_{p-1})
    = \HTc^{*}(X_{p}\setminus X_{p-1}),
  \end{align}
  where we have used the isomorphism~\eqref{eq:HGc-rel-compl}.
  Note that \(\HTc^{*}(X_{p}\setminus X_{p-1})\) is given by Lemma~\ref{thm:E1}. 
  Everything said about~\(X\) applies equally to~\(X^{\SS }\)
  with \(A(\SS )\) taking the role of~\(A\).
  Since \(X^{\SS }\) is closed in~\(X\), the inclusion~\(X^{\SS }\hookrightarrow X\)
  induces a map of spectral sequences \(E_{*}(X)\to E_{*}(X^{\SS })\).
  Localizing \(E_{1}(X)\) at~\(\SS \) eliminates all terms~\(\HTc^{*}(X_{\alpha})\) with \(\alpha\notin A(\SS )\).
  The localized map \(\SS ^{-1}E_{1}(X)\to \SS ^{-1}E_{1}(X^{\SS })\) therefore is an isomorphism,
  hence so is the map~\(\SS ^{-1}\HTc^{*}(X) \to \SS ^{-1}\HTc^{*}(X^{\SS })\).

  The homological claim follows from this and the universal coefficient theorem (Proposition~\ref{thm:UCT})
  as in~\cite[Prop.~2.5]{AlldayFranzPuppe:orbits4}.
\end{proof}

\begin{lemma}
  \label{thm:S-Xp}
  For~\(0\le p\le r\), there is a multiplicative subset~\(\SS \subset\RR\)
  such that \(A(\SS )=\{\,\alpha\in A\mid r_{\alpha}\le p\,\}\).
  For such an~\(\SS \),
  the localization map~\(\hHTc_{*}(X_{q},X_{q-1})\to \SS ^{-1}\hHTc_{*}(X_{q},X_{q-1})\) is injective for~\(q\le p\),
  and
  \(\SS ^{-1}\hHTc_{*}(X_{q},X_{q-1})=0\) for~\(q>p\).
\end{lemma}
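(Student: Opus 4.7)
The plan is to construct $\SS$ explicitly as the multiplicative set generated by certain sums of squares of real linear forms, then to prove the two localization claims separately: the vanishing for $q > p$ falls out of the Localization Theorem (Proposition~\ref{thm:localization}), while the injectivity for $q \le p$ needs a direct non-zero-divisor argument on each factor of the product decomposition of Lemma~\ref{thm:E1}.

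For the construction of $\SS$: for each $\alpha$ with $r_\alpha > p$, fix an $\R$-basis $\ell_1^{(\alpha)},\dots,\ell_{k_\alpha}^{(\alpha)}$ of $\lll_\alpha^{*} := \ker(\ttt^{*} \to \ttt_\alpha^{*}) \subset \RT^{2}$ (so $k_\alpha = r - \dim\ttt_\alpha > p$) and set $f_\alpha := \sum_{i}(\ell_i^{(\alpha)})^{2} \in \RT$. Since $f_\alpha$ is a sum of squares of real linear forms, $f_\alpha(\xi) = 0$ iff $\xi \in \ttt_\alpha$; hence $f_\alpha$ lies in $\ppp_\alpha := \ker(\RT \to \RRa)$ but not in $\ppp_\gamma$ whenever $\ttt_\gamma \not\subset \ttt_\alpha$. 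Let $\SS$ be the multiplicative set generated by $\{f_\alpha : r_\alpha > p\}$. For $\beta$ with $r_\beta \le p$ and any $\alpha$ with $r_\alpha > p$, $\dim\ttt_\beta \ge r-p > \dim\ttt_\alpha$ forces $\ttt_\beta \not\subset \ttt_\alpha$, so $f_\alpha \notin \ppp_\beta$; primality of $\ppp_\beta$ then yields $\SS \cap \ppp_\beta = \emptyset$, i.e.\ $\beta \in A(\SS)$. Conversely $f_\alpha \in \SS \cap \ppp_\alpha$ for $r_\alpha > p$, so $A(\SS) = \{\alpha : r_\alpha \le p\}$ as required.

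For the vanishing when $q > p$: by construction $X^{\SS} = \bigcup_{\alpha \in A(\SS)} X_\alpha = X_p$, so $X_q \cap X^{\SS} = X_p = X_{q-1} \cap X^{\SS}$ (using $q-1 \ge p$). Proposition~\ref{thm:localization} applied to $X_q$ and $X_{q-1}$ individually then says that the inclusions $X_p \hookrightarrow X_{q-1} \hookrightarrow X_q$ become $\SS^{-1}$-isomorphisms on $\hHTc_{*}$; the five lemma applied to the localized long exact sequence of the pair $(X_q, X_{q-1})$ gives $\SS^{-1}\hHTc_{*}(X_q, X_{q-1}) = 0$.

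For the injectivity when $q \le p$, Lemma~\ref{thm:E1} reduces the claim to showing that every $s \in \SS$ acts as a non-zero-divisor on $\hHTc_{*}(X_\alpha)$ for each $\alpha$ with $r_\alpha = q$. Fix a splitting $T = T_\alpha \times L_\alpha$; then $\RT = R_{T_\alpha} \otimes R_{L_\alpha}$ and, via Proposition~\ref{thm:HTc-Xalpha}, $\hHTc_{*}(X_\alpha) \cong R_{T_\alpha} \otimes \hat H^{L_\alpha,c}_{*}(X_\alpha)$ as $\RT$-modules, with $\hat H^{L_\alpha,c}_{*}(X_\alpha)$ bounded in cohomological degree because $L_\alpha$ acts locally freely on $X_\alpha$. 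By the preceding paragraph no factor $f_{\alpha'}$ of $s$ lies in $\ppp_\alpha$, so the image $\bar s \in R_{T_\alpha}$ is nonzero; writing $s = \bar s \otimes 1 + s'$ with $s' \in R_{T_\alpha} \otimes R_{L_\alpha}^{+}$, the summand $\bar s \otimes 1$ preserves the $\hat H^{L_\alpha,c}_{*}$-grading and acts by multiplication by a nonzero element of the integral domain $R_{T_\alpha}$, whereas $s'$ strictly raises that grading. Inspecting the lowest-degree nonzero component of a hypothetical $0 \ne x$ with $sx = 0$ then yields the desired contradiction. I expect the main obstacle to be executing this grading argument cleanly: one must track both the polynomial grading on $R_{T_\alpha}$ and the cohomological grading on $\hat H^{L_\alpha,c}_{*}(X_\alpha)$ through the splitting-dependent isomorphism of Proposition~\ref{thm:HTc-Xalpha}, and verify that the decomposition $s = \bar s \otimes 1 + s'$ is compatible with the intrinsic $\RT$-module structure.
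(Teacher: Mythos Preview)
Your argument is correct. The overall route (localization theorem for the vanishing, the product decomposition of Lemma~\ref{thm:E1} plus Proposition~\ref{thm:HTc-Xalpha} for torsion-freeness) is the same as the paper's, but the construction of~\(\SS\) is genuinely different. The paper observes that \(A\) is countable while \(\R\) is uncountable, and for each~\(\alpha\) with~\(r_{\alpha}>p\) picks a single linear form~\(t_{\alpha}\in\lll_{\alpha}^{*}\) avoiding the union of the lower-dimensional subspaces~\(\lll_{\beta}^{*}\) with~\(r_{\beta}\le p\). Your sum-of-squares construction sidesteps this counting argument entirely: the element~\(f_{\alpha}\) vanishes precisely on~\(\ttt_{\alpha}\), so the dimension comparison~\(\dim\ttt_{\beta}>\dim\ttt_{\alpha}\) already forces~\(f_{\alpha}\notin\ppp_{\beta}\). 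This is more constructive and would work over any ordered field, at the modest cost of producing generators of degree~\(4\) rather than~\(2\). For the vanishing claim the paper applies the localization theorem directly to the open stratum~\(X_{q}\setminus X_{q-1}\) (using the identification \(\hHTc_{*}(X_{q},X_{q-1})=\hHTc_{*}(X_{q}\setminus X_{q-1})\)), whereas you localize the long exact sequence of the pair; both are fine. Finally, your filtration-by-\(M\)-degree argument for torsion-freeness is a correct unpacking of what the paper asserts in one sentence; the worry you flag about the decomposition \(s=\bar s\otimes 1+s'\) is harmless, since a homogeneous element of~\(R_{T_{\alpha}}\otimes M\) has only finitely many nonzero \(M\)-graded components and the lowest one is annihilated only by~\(\bar s\otimes 1\), which acts injectively on the free \(R_{T_{\alpha}}\)-module \(R_{T_{\alpha}}\otimes M^{d}\).
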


\begin{proof}
  Note first that \(T\) has only countably many subtori, so that \(A\) is countable.
  For~\(\alpha\in A\), the dimension of
  the dual~\(\lll_{\alpha}^{*}\)
  of the Lie algebra of~\(L_{\alpha}=T/T_{\alpha}\)
  is~\(r_{\alpha}\). Assume \(r_{\alpha}>p\). Since \(\kk\) is uncountable,
  \begin{equation}
    \lll_{\alpha}^{*} \setminus \bigcup_{r_{\beta}\le p} \lll_{\beta}^{*}
  \end{equation}
  cannot be empty. Pick a~\(t_{\alpha}\) from this set and let \(\SS \subset\RR\) be the multiplicative subset
  generated by all such~\(t_{\alpha}\). Then, by construction, \(A(\SS )\) is of the claimed form.

  From Lemma~\ref{thm:E1} 
  and Proposition~\ref{thm:HTc-Xalpha} we see that
  \(\hHTc_{*}(X_{q},X_{q-1})\) is \(\SS \)-torsion-free for~\(q\le p\), which means
  that the localization map is injective. Moreover,
  Proposition~\ref{thm:localization} gives
  \(\SS ^{-1}\hHTc_{*}(X_{q},X_{q-1})=0\) for~\(q>p\)
  as \((X_{q}\setminus X_{q-1})^{\SS }=\emptyset\).
\end{proof}

The following result generalizes \cite[Cor.~4.4]{AlldayFranzPuppe:orbits4} to our setting.
Note that unlike~\cite{AlldayFranzPuppe:orbits1} and~\cite{AlldayFranzPuppe:orbits4},
the proof below does not use the theory of Cohen--Macaulay modules.

\begin{proposition}
  \label{thm:degenerate-E1}
  The spectral sequence induced by the orbit filtration 
  and converging to~\(\hHTc_{*}(X)\)
  degenerates at the \(E_{1}\)~page, which is given by Lemma~\ref{thm:E1}. 
\end{proposition}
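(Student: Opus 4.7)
The plan is to prove degeneration by induction on the page index $r \ge 1$, showing at each stage that every differential $d_r$ vanishes. The key input is Lemma \ref{thm:S-Xp}, which for each $p$ produces a multiplicative subset $S \subset \RR$ that annihilates $\hHTc_{*}(X_{q},X_{q-1})$ on localization for $q > p$ while keeping $\hHTc_{*}(X_{q},X_{q-1})$ $S$-torsion-free for $q \le p$. The spectral sequence under consideration arises from the increasing closed filtration $X_{-1}\subset X_{0}\subset\cdots\subset X_{r}$, so its differentials take the form $d_{r}\colon E_{r}^{p}\to E_{r}^{p-r}$ with $E_{1}^{p}=\hHTc_{*}(X_{p},X_{p-1})$ as given by Lemma \ref{thm:E1}; convergence is automatic because the filtration has finite length.

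For the base case $r=1$ and a fixed $p$: I would pick the multiplicative set $S$ supplied by Lemma \ref{thm:S-Xp} for index $p-1$. Then $S^{-1}E_{1}^{p}=0$ because $p > p-1$, so the localized differential $S^{-1}d_{1}^{p}$ vanishes and the image of $d_{1}^{p}$ in $E_{1}^{p-1}$ is $S$-torsion. But the localization map $E_{1}^{p-1}\to S^{-1}E_{1}^{p-1}$ is injective by the same lemma, forcing the image to vanish, so $d_{1}^{p}=0$. The inductive step follows the same template: once $d_{i}=0$ for $1\le i<r$, we have $E_{r}^{q}=E_{1}^{q}$ for every $q$, so $d_{r}^{p}$ can be read as a map $E_{1}^{p}\to E_{1}^{p-r}$; choosing $S$ from Lemma \ref{thm:S-Xp} for index $p-r$ again forces $S^{-1}E_{1}^{p}=0$ while $E_{1}^{p-r}$ remains $S$-torsion-free, so $d_{r}^{p}=0$ drops out.

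The main subtlety is that Lemma \ref{thm:S-Xp} directly controls only the $E_{1}$-page, whereas the differentials $d_{r}$ live on later pages that are \emph{a priori} merely subquotients of $E_{1}$. The induction hypothesis circumvents this by collapsing the intervening pages, so that the $S$-torsion-freeness of $E_{1}^{p-r}$ transfers to $E_{r}^{p-r}$ and the same localization argument applies verbatim on every page. Beyond this bookkeeping, the proof is essentially formal: source localizes to zero, target is torsion-free, hence the differential vanishes.
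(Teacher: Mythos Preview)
Your proof is correct and follows essentially the same strategy as the paper's. The only cosmetic difference is that the paper argues by contradiction---assuming a first non-vanishing differential~\(d_{k}\), which automatically gives \(E_{k}=E_{1}\) at the relevant indices---whereas you reach the same conclusion by explicit induction on the page; the localization argument (source killed, target torsion-free) is identical.
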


  We could alternatively state this result by saying that
  certain short sequences are exact, \cf~Proposition~\ref{thm:seq-hHG-Xi-Xi1}.

\begin{proof}
  We prove the claim by contradiction.
  Let \(d_{k}\), \(k\ge1\), be the first non-vanishing differential,
  say with non-zero value \(a=d_{k}(b)\in\hHTc_{*}(X_{p},X_{p-1})\).
  Choose an~\(\SS \subset\RR\) as in Lemma~\ref{thm:S-Xp}
  and localize the \(E_{k}\)~page of the spectral sequence at~\(\SS \).
  In the localized spectral sequence the differential vanishes
  as \(\SS ^{-1}\hHTc_{*}(X_{q},X_{q-1})=0\) for all~\(q>p\).
  But \(\hHTc_{*}(X_{p},X_{p-1})\) injects into its localization~\(\SS ^{-1}\hHTc_{*}(X_{p},X_{p-1})\),
  which implies \(a=0\).
\end{proof}

Everything done so far in this section goes through for twisted coefficients
with respect to a  fixed orientation cover~\(\tZ\to Z\)
in case the ambient \(T\)-orbifold~\(Z\) is locally orientable.
In addition, one has the following:

\begin{lemma}
  \label{thm:locally-orientable}
  Let \(X\) be a locally orientable \(T\)-orbifold.
  Then each component~\(Y\) of~\(X^{T}\) is again locally orientable.
  Moreover, the restriction  of the orientation cover for~\(X\) to~\(Y\)
  is the orientation cover for~\(Y\).
\end{lemma}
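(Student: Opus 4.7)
The plan is to work locally near a fixed point using the orbifold differentiable slice theorem (\cite[Prop.~2.3]{LermanTolman:1997} as cited earlier), then exploit the canonical complex structure on the normal directions coming from the $T$-action.

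First I would fix a fixed point $x\in X^T$ lying in the component $Y$. By the orbifold slice theorem, a neighbourhood of $x$ in $X$ is modelled on $\R^n/\Gamma$ for a finite group $\Gamma$ acting linearly and orthogonally, carrying a commuting linear $T$-action (commuting because a connected group acting by conjugation on a finite group is trivial); local orientability of $X$ means $\Gamma\subset SO(n)$, and connectedness of $T$ forces the $T$-action to factor through $SO(n)$ as well. Splitting orthogonally $\R^n=V\oplus V^\perp$ with $V=(\R^n)^T$, the subspace $V$ is a uniformising chart for $Y$ near $x$, carrying the induced $\Gamma$-action.

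The key step is to show that $\Gamma$ preserves orientation on $V$, i.e.\ that the image of $\Gamma$ in $O(V)$ lies in $SO(V)$. For this I would equip $V^\perp$ with its canonical $T$-invariant complex structure~$J$: decompose $V^\perp$ into the real weight spaces $V^\perp_\alpha$ for the nonzero weights~$\alpha$ of~$T$, each of which is naturally a complex line (or sum of complex lines) with $J$ determined by~$\alpha$. Since $\Gamma$ commutes with $T$ it preserves this decomposition and commutes with~$J$ on each summand; hence $\Gamma$ acts $\C$-linearly on~$V^\perp$, so $\det_\R(\gamma|_{V^\perp})=+1$ for every $\gamma\in\Gamma$. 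Combined with $\det_\R(\gamma|_{\R^n})=+1$, this forces $\det_\R(\gamma|_V)=+1$, so $Y$ is locally orientable at~$x$.

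For the second assertion I would use the same splitting to match orientation covers. The canonical $T$-invariant orientation of~$V^\perp$ (arising from the complex structure, hence depending only on the $T$-action) sets up a $\Gamma$-equivariant bijection between orientations of $\R^n$ and orientations of $V$, and therefore a local identification of the two-sheeted cover $\pi^{-1}(Y)\to Y$ with the orientation cover $\tilde Y\to Y$. Because the complex structure on the normal directions varies continuously with~$y\in Y$ (being determined by the continuous $T$-action on the normal bundle of $Y$ in $X$), these local identifications are compatible on overlaps and assemble into a global isomorphism of oriented two-fold covers.

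The main obstacle is the bookkeeping around the orbifold slice theorem: one must be sure that the local model may be chosen so that $T\times\Gamma$ acts linearly on a single chart of~$\R^n$ (which uses connectedness of~$T$ and finiteness of~$\Gamma$ to rule out twisted extensions), and that the canonical orientation of the normal directions is genuinely independent of choices, so that the local identifications of covers glue. Once this is in place, the algebraic argument with the complex structure is routine.
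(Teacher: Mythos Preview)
Your argument is correct and takes a genuinely different route from the paper. The paper handles the first part cohomologically: on a uniformising chart~\(U\) it uses that \(H_c^*(U)\) is concentrated in top degree together with the localization theorem (Proposition~\ref{thm:localization}) to conclude \(H_c^*(U^T)\ne 0\), which forces the induced action of the local group on~\(\tilde U^T\) to be orientation-preserving. For the second part the paper simply invokes that the normal bundle of~\(Y\) in~\(X\) is orientable, citing Duflot~\cite{Duflot:1983}. Your approach replaces both steps by the single representation-theoretic observation that on the normal slice~\(V^\perp\) the torus acts with no trivial subrepresentation, so each isotypic piece carries a \(T\)-invariant complex structure and \(\Gamma\), commuting with~\(T\), is forced to act \(\C\)-linearly there; this simultaneously gives \(\det_\R(\gamma|_{V^\perp})=1\) (hence local orientability of~\(Y\)) and orientability of the normal bundle (hence the identification of orientation covers). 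Your argument is more elementary and self-contained---it essentially reproves the relevant special case of Duflot's result rather than citing it---while the paper's argument exploits the equivariant machinery already set up and is shorter once that is in place. One small point: the complex structure on~\(V^\perp\) is not literally canonical (it depends on choosing \(\alpha\) versus \(-\alpha\) for each weight), but this is harmless since any choice yields the same orientation class on the normal bundle, which is all you need.
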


\begin{proof}
  The first part is well-known, \cf~\cite[Thm.~V.3.2]{Borel:1960}.
  With our tools, it can be seen as follows:
  Choose a uniformizing chart~\(\tilde U\to U\) of~\(X\) at~\(y\in Y\) such that \(\tilde U\) is an open ball with linearized torus action.
  Because \(X\) is locally orientable, \(\Hc^{*}(U)\) is non-zero and concentrated
  in top degree. Thus \(\HTc^{*}(U)=\RT\otimes\Hc^{*}(U)\). By the localization theorem,
  this implies that \(\HTc^{*}(U^{T})=\RT\otimes\Hc^{*}(U^{T})\) is non-zero.
  Hence \(U^{T}\) is the quotient of~\(\tilde U^{T}\)
  by an orientation-preserving action since otherwise we would have \(\Hc^{*}(U^{T})=0\).

  For the the second claim it suffices to observe that \(Y\) is orientable if and only if \(X\) is so.
  This is a consequence of the fact that the normal (orbi)bundle of~\(Y\) in~\(X\) is always orientable.
  See~\cite[Cor.~2]{Duflot:1983} for the case of manifolds, from which the orbifold case follows.
\end{proof}

\subsection{The number of infinitesimal orbit types}

\begin{theorem}
  \label{thm:finite-inf-orbits}
  Let \(X\) be a 
  locally orientable \(G\)-orbifold.
  If \(H^{*}(X)\) is finite-dimensional, then
  only finitely many infinitesimal orbit types occur in~\(X\).
\end{theorem}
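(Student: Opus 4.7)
The plan is to combine equivariant Poincaré duality with the degenerating spectral sequence of the orbit filtration, and then count infinitesimal orbit types via an elementary finiteness principle for products of modules. By Remark~\ref{rem:finite-G-T} one reduces at once to the torus case, so suppose for contradiction that the locally orientable $T$-orbifold $X$ has $H^{*}(X)$ finite-dimensional while infinitely many infinitesimal orbit types $\{X_{\alpha}\}_{\alpha\in A}$ occur.

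The first input is that $\HT^{*}(X)$ is finitely generated over the Noetherian ring $\RT$ by Remark~\ref{rem:HG-fg}, whence by equivariant Poincaré duality (Proposition~\ref{thm:PD-equiv}) the closed-support equivariant homology $\hHTc_{*}(X;\kktilde)$ is also finitely generated over $\RT$. The orbit filtration then gives rise to the spectral sequence of Proposition~\ref{thm:degenerate-E1} (extended to twisted coefficients, as the paper notes at the end of Section~\ref{sec:prelim-torus}) degenerating at
\[
E_{1}^{p}\;=\;\hHTc_{*}(X_{p},X_{p-1};\kktilde)\;=\;\prod_{\alpha:\,r_{\alpha}=p}\hHTc_{*}(X_{\alpha};\kktilde)
\]
and abutting to $\hHTc_{*}(X;\kktilde)$. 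Since the filtration has length $r+1$, each $E_{1}^{p}$ is a subquotient of a finitely generated $\RT$-module and hence itself finitely generated.

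The second input is the non-vanishing $\hHTc_{*}(X_{\alpha};\kktilde)\ne 0$ for every $\alpha\in A$. The slice theorem shows that $X_{\alpha}$ is open in $X^{T_{\alpha}}$; applying Lemma~\ref{thm:locally-orientable} to the restricted $T_{\alpha}$-action on $X$, each component of $X^{T_{\alpha}}$ is locally orientable and the orientation cover of $X$ restricts to its orientation cover, and hence the same holds for the open subset $X_{\alpha}$. Ordinary Poincaré duality for $X_{\alpha}$ then yields $\Hc^{*}(X_{\alpha};\kktilde)\ne 0$, and the Serre-type spectral sequence of Lemma~\ref{thm:Car-serre} gives $\HTc^{*}(X_{\alpha};\kktilde)\ne 0$, whose dual is $\hHTc_{*}(X_{\alpha};\kktilde)$.

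Finally, a standard fact finishes the argument: over any Noetherian ring $R$, an infinite product $\prod_{i}M_{i}$ of nonzero modules is never finitely generated, because it contains the submodule $\bigoplus_{i}M_{i}$, in which any finite collection of elements has finite total support and therefore generates a proper submodule, so this direct sum (and hence the full product) cannot be finitely generated. Applied to each $E_{1}^{p}$, only finitely many orbit types $\alpha$ with $r_{\alpha}=p$ can occur; summing over $p=0,\ldots,r$ contradicts the assumption. The main obstacle is the non-vanishing $\hHTc_{*}(X_{\alpha};\kktilde)\ne 0$: this is where local orientability enters decisively, since it is exactly what allows the orientation cover to restrict compatibly to the strata $X^{T_{\alpha}}\supset X_{\alpha}$, and it plays the role that the integral-cohomology hypothesis plays in Mann's theorem.
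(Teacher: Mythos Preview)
Your proof is correct and follows the same route as the paper: reduce to the torus, combine equivariant Poincar\'e duality with the degeneration of the orbit-filtration spectral sequence for~\(\hHTc_{*}(X;\kktilde)\) (Proposition~\ref{thm:degenerate-E1} in its twisted form), and contradict the non-vanishing of each factor~\(\hHTc_{*}(X_{\alpha};\kktilde)\) against the finite generation forced on~\(E_{1}^{p}=E_{\infty}^{p}\). The only wrinkle is the phrase ``whose dual is \(\hHTc_{*}(X_{\alpha};\kktilde)\)'', which is not literally true; the paper obtains the non-vanishing more directly by noting that \(X_{\alpha}\) carries an equivariant orientation (via Lemma~\ref{thm:locally-orientable}), but your detour through \(\Hc^{*}(X_{\alpha};\kktilde)\ne0\) is equally valid once you pass to~\(\hHTc_{*}\) by the homological Serre spectral sequence of Proposition~\ref{thm:hHGc-serre} rather than by an informal ``dual''.
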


\begin{proof}
  By Mostow's argument 
  we may assume that \(G=T\) is a torus, see Remark~\ref{rem:finite-G-T}.

  Assume that there are infinitely many infinitesimal orbit types.
  By Proposition~\ref{thm:degenerate-E1} and the previous section,
  the spectral sequence induced by the orbit filtration
  and converging to~\(\hHTc_{*}(X;\kktilde)\)
  degenerates at the \(E_{1}\)~page.
  Each of the infinitely many terms~\(\hHTc_{*}(X_{\alpha};\kktilde)\)
  appearing in the twisted version of Lemma~\ref{thm:E1} 
  is non-zero
  as it contains an equivariant orientation by Lemma~\ref{thm:locally-orientable}.
  Thus, \(E_{\infty}(X)=E_{1}(X)\) is not finitely generated.
  This implies that \(\hHTc_{*}(X;\kktilde)\) cannot be finitely generated either
  because \(E_{\infty}(X)\) arises from a filtration of it and \(\RR\) is Noetherian.

  On the other hand,
  \(H^{*}(X)\) is finite-dimensional by assumption,
  hence \(\HT^{*}(X)\) is finitely generated over~\(\RT\).
  By equivariant Poincaré duality (Proposition~\ref{thm:PD-equiv}),
  the same holds for~\(\hHTc_{*}(X;\kktilde)\).
  Contradiction.
\end{proof}

We conclude 
with several examples.
The first one, due to Montgomery~\cite[\S 3]{Yang:1957},
illustrates the difference between our result and Mann's~\cite[Thm.~3.5]{Mann:1962}.
Recall that Mann showed that any orientable cohomology manifold
with finitely generated integral cohomology has only finitely many orbit types.

\begin{example}
  Consider a circle acting on~\(\R^{3}\) via rotations about some axis.
  For each~\(m\in\N\), choose an open solid torus
  equivariantly diffeomorphic to~\(A_{m}\times S^{1}\), where \(A_{m}\subset\R^{3}\)
  is an open disc with centre~\(b_{m}\).
  (These tori have to be disjoint.)
  Remove the central circle~\(b_{m}\times S^{1}\) of each torus
  and glue the remaining manifold to copies of the~\(A_{m}\times S^{1}\)
  via the diffeomorphisms~\(f_{m}\)
  of~\((A_{m}\setminus b_{m})\times S^{1}\cong(0,1)\times S^{1}\times S^{1}\)
  given by~\(f_{m}(r,\alpha,\beta)=(r,\alpha\beta^{1-m},\beta^{m}/\alpha)\).
  Each subgroup~\(\Z_{m}\subset S^{1}\) occurs as an isotropy group
  in the orientable manifold~\(X\) thus obtained.
  One can check that \(H_{1}(X;\Z)\) is isomorphic to the direct sum of all~\(\Z_{m}\)
  and that \(H_{2}(X;\Z)\) vanishes. In particular, \(H_{*}(X;\Z)\) is not finitely generated.
  On the other hand, \(H_{*}(X;\R)\)
  is finite-dimensional, and Theorem~\ref{thm:finite-inf-orbits} holds (trivially).
\end{example}

\begin{example}
  We now imitate this idea for the standard action
  of~\(T=S^{1}\times S^{1}\) on~\(\C^{2}\).
  In the free part of the action, choose disjoint open subsets of the form~\(A_{m}\times T\),
  where the slice~\(A_{m}\subset\C^{2}\) is again an open disc with centre~\(b_{m}\).
  Remove \(b_{m}\times T\) from the manifold and glue the rest
  to a copy of~\(A_{m}\times T\) via the diffeomorphism~\(f_{m}\)
  of~\((A_{m}\setminus b_{m})\times T\cong(0,1)\times(S^{1})^{3}\)
  given by~\(f_{m}(r,\alpha,\beta,\gamma)=(r,\alpha\beta,\beta^{m}\gamma,\alpha)\).
  Each such operation increases the Betti sum of the manifold by~\(4\),
  and it produces the isotropy group~\(\{(g,g^{-m})\mid g\in S^{1}\}\subset T\).
  If this is done for all~\(m\in\Z\), we obtain a manifold with infinitely many
  distinct infinitesimal isotropy groups and infinite Betti sum.
\end{example}

We finally recall an example of Kister and Mann~\cite[Sec.~1]{KisterMann:1962}
showing that Theorem~\ref{thm:finite-inf-orbits} cannot be naively extended
to actions on more general spaces.

\begin{example}
  Take countably many closed discs, connected by a line through their centres.
  This space is properly homotopy-equivalent to~\(\R\), hence contractible.
  On the other hand, the discs can be rotated independently via characters~\(T\to S^{1}\)
  to produce infinitely many distinct infinitesimal orbit types.
  Note that an argument as in the proof of Theorem~\ref{thm:finite-inf-orbits}
  fails here
  because the non-fixed points do not contribute to~\(\hHTc_{*}(X)\). 
  In fact, for a closed disc~\(D\) with centre~\(x\) one has \(\hHc_{*}(D,\{x\})=0\),
  hence also \(\hHTc_{*}(D,\{x\})=0\) by Proposition~\ref{thm:hHGc-serre}.
\end{example}

\end{document}